\pgfplotsset{compat=1.18}
\DeclareMathOperator*{\argmax}{arg\,max}
\DeclareMathOperator*{\argmin}{arg\,min}
\definecolor{hanblue}{rgb}{0.27, 0.42, 0.81}
\definecolor{mordantred19}{rgb}{0.68, 0.05, 0.0}
\definecolor{darkgreen}{rgb}{0.0, 0.38, 0.12}
\definecolor{red}{rgb}{0.8, 0.0, 0.0}
\definecolor{green}{rgb}{0.0, 0.5, 0.0}
\newcommand{\eps}{\varepsilon}
\DeclareMathOperator{\supp}{supp}
\newcommand{\tcr}[1]{\textcolor{black}{#1}}
\renewcommand{\div}{{\rm div}\,}
\newcommand{\R}{\mathbb{R}}
\newcommand{\1}{\mathbbm{1}}
\newcommand{\N}{\mathbb{N}}
\newcommand{\cC}{\mathcal{C}}
\renewcommand{\S}{\mathcal{S}}
\newcommand{\T}{\mathcal{T}}
\newcommand{\A}{\mathcal{A}}
\newcommand{\BV}{\operatorname{BV}}
\newcommand{\Lq}{L^q(\Omega)}
\newcommand{\TV}{\operatorname{TV}}
\newcommand{\Per}{\operatorname{Per}}
\newcommand{\dist}{\operatorname{dist}}
\newcommand{\cl}{\operatorname{cl}}
\newcommand{\wkto}{\rightharpoonup}
\newcommand{\wksto}{\stackrel{\ast}{\rightharpoonup}}
\newcommand{\Span}{\operatorname{span}}
\newcommand{\dd}{\, \mathrm{d}}
\newcommand{\bH}{H^1(\partial\bar{E}^{\gamma})}
\newcommand\restr[2]{{
 \left.\kern-\nulldelimiterspace 
 #1 
 \vphantom{\big|} 
 \right|_{#2} 
 }}
\numberwithin{equation}{section}
\theoremstyle{plain}
\newtheorem{theorem}{Theorem}[section]
\newtheorem{lemma}[theorem]{Lemma}
\newtheorem{proposition}[theorem]{Proposition}
\newtheorem{corollary}[theorem]{Corollary}
\newtheorem{assumption}{Assumption}
\newtheorem{remark}[theorem]{Remark}
\theoremstyle{definition}
\definecolor{hanblue}{rgb}{0.27, 0.42, 0.81}
\title{Linear convergence of a one-cut conditional gradient method for total variation regularization}
\author{Giacomo Cristinelli$^\ast$, Jos\'e A. Iglesias\thanks{Department of Applied Mathematics, University of Twente, 7500AE Enschede, The Netherlands \newline (\texttt{g.cristinelli@utwente.nl, jose.iglesias{@}utwente.nl})} , Daniel Walter\thanks{Institut f\"ur Mathematik, Humboldt-Universit\"at zu Berlin, 10117 Berlin, Germany \newline (\texttt{daniel.walter@hu-berlin.de})}}
\date{}
\begin{document} 
\maketitle

\begin{abstract}
We introduce a fully-corrective generalized conditional gradient method for convex minimization problems involving total variation regularization on multidimensional domains. It relies \tcr{on alternatively} updating an active set of subsets of the spatial domain \tcr{and an iterate given by} a conic combination of the associated characteristic functions. Different to previous approaches in the same spirit, the computation of a new candidate set only requires the solution of one prescribed mean curvature problem\tcr{,} instead of the resolution of a fractional minimization task analogous to finding a generalized Cheeger set. After discretization, the former can be realized by a single run of a graph cut algorithm leading to significant speedup in practice. We prove the global sublinear convergence of the resulting method, under mild assumptions, and its asymptotic linear convergence in a more restrictive two-dimensional setting which uses results of stability of surfaces of prescribed mean curvature under perturbations of the curvature. Finally, we numerically demonstrate this convergence behavior in some model PDE-constrained minimization problems.
\end{abstract}
\vskip .3truecm \noindent Keywords: total variation regularization, optimal control, nonsmooth optimization, sparsity
 
\vskip .1truecm \noindent 2020 Mathematics Subject Classification: 49M41, 65J20, 52A40, 49J45, 49Q20.

\section{Introduction}
In this paper, we consider the following minimization problem 
\begin{align} \label{def:BVprob}
 \min_{0 \leq u \in \Lq} J(u) \coloneqq \left\lbrack F(Ku)+\TV(u,\Omega) \right\rbrack \tag{$\cal{P}$}
\end{align}
where $\Omega \subset \R^d $ is a bounded domain with strongly Lipschitz boundary \tcr{(in the sense that, up to a rotation, it can be locally written as the subgraph of a Lipschitz function)},~$q=d/(d-1)$ and~$d \geq 1$, $K$ is a bounded linear operator mapping to some \tcr{separable} Hilbert space $Y$, and 
\begin{equation}\label{eq:defTV}\TV(u,\Omega):=\sup\left\{\int_\Omega u\,\div\psi \dd x \,\middle\vert\, \psi\in \cC^1_c(\Omega; \R^d), \|\psi\|_{\cC(\Omega;\R^d)}\leq 1\right\}\end{equation}
is the isotropic total variation of $u$ in $\Omega$. Incorporating the latter as a regularizer in inverse problems and optimal control tasks formalizes the modelling assumption that the sought-for solutions should be piecewise constant. From a geometrical perspective, this is \tcr{motivated by} the characterization (see e.g.~\cite[Prop.~8]{AmbCasMasMor01}) of the extreme points of the total variation ball $\{u \,\vert\, \TV(u,\R^d) \leq 1\}$ as characteristic functions of simple sets (roughly speaking, simply connected). \tcr{In the case where $Y$ is finitely dimensional, this can be rigorously justified by the use of convex representer theorems \cite{BoyEtAl19, BreCar20}.}

More recently, numerical algorithms exploiting this expected sparsity structure were introduced by \cite{CriIglWal23, DecDuvPet23} based on accelerated variants of generalized conditional gradient methods. More in detail, and neglecting the inequality constraints for now, these produce a sequence of piecewise constant iterates $u_k=\sum_j \lambda^j_k \1_{E^j_k}$ by alternating between two subproblems: First, computing a new  
\begin{equation} \label{eq:insertscaledintro}
   \bar{E}_k \in \argmax_{E \subset \Omega} \frac{\int_{\tcr{E}} p_k \dd x}{\Per(E,\Omega)} \quad \text{where} \quad p_k= -K^* \nabla F(Ku_k),
\end{equation}
whose characteristic function is subsequently added to the iterate, as well as, second, a finite-dimensional but convex coefficient update problem in order to adjust the weights appearing in the linear combination. Here, $\Per(E,\Omega)$ denotes the perimeter of a subset $E$ in $\Omega$. \tcr{The method in this paper is an improvement of the one proposed in \cite{CriIglWal23}, so some comparison between them and the method in \cite{DecDuvPet23} is warranted. In our view, the main difference is the settings in which each of them is applicable. Our setting focuses on situations where $K$ is the solution of a PDE, and numerically realizes the solutions of \eqref{eq:insertscaledintro} as unions of simplices in a triangulation consistent with finite elements for the mentioned PDEs. In contrast, \cite{DecDuvPet23} is focused on explicitly given integral operators. This difference in perspective carries over to the treatment of the set insertion problem \eqref{eq:insertscaledintro}. In \cite{DecDuvPet23}, the authors first approach \eqref{eq:insertscaledintro} by thresholding the solution of an equivalent problem over real-valued functions on a grid, and then performing additional nonconvex deformation step on an off-grid polygonal description of the boundary. Instead,} \cite{CriIglWal23} proposes a Dinkelbach-Newton method which replaces \eqref{eq:insertscaledintro} by a sequence of prescribed mean curvature problems     
\begin{equation}\label{eq:dinkelbach}
    \alpha_{\ell+1}= \frac{\Per(E_{\ell},\Omega)}{\int_{E_{\ell}} p_k \dd x}, \quad E_{\ell+1} \in \argmin_{E \subset \Omega} \left \lbrack -\alpha_{\ell+1}\int_{E} p_k \dd x+ \Per(E,\Omega)  \right \rbrack
\end{equation}
which, after discretization, can be rewritten as a minimum cut problem on the dual graph of the mesh. The latter can then be solved \tcr{up to optimality} by standard methods \tcr{which experimentally exhibit very fast performance in well-structured graphs} \cite{BoyKol04}. \tcr{Assuming the exact solution of \eqref{eq:insertscaledintro}, global convergence together with sublinear rates of convergence can be obtained. These} follow by interpreting the resulting method as an accelerated version of a generalized conditional gradient method. \tcr{Such methods rely on solving subproblems in which the smooth term of the objective function is replaced by its linearization at the current iterate. As such, they are not directly applicable to one-homogeneous regularizers like $\TV(\cdot,\Omega)$, which can be remedied by introducing an artificial constraint to ensure that the set of possible solutions of the described subproblems is bounded (see Section \ref{sec:derivation} below for a more detailed derivation). Using bounds on the cost function itself, as done in \cite{CriIglWal23}, leads} to the constrained surrogate problem 
\begin{align}
 \min_{ u \in L^q(\Omega)}\left\lbrack F(Ku)+\TV(u,\Omega) \right\rbrack \quad \text{s.t.} \quad \TV(u,\Omega) \leq M_{\text{TV}},
\end{align}
where $M_{\text{TV}}$ \tcr{is not used in the algorithm formulation but does appear in convergence estimates. Moreover, while both, \cite{DecDuvPet23,CriIglWal23}, consider additional steps in the form of convex coefficient update problems for $\lambda^j_k$, \cite{DecDuvPet23} also exploits the off-grid polygonal description of sets to implement so-called sliding steps in which the global energy is used to modify the sets $E^j_k$. Due to the different goals in both papers, adopting such a step in the setting of \cite{CriIglWal23} or the current paper would be more challenging, since it would involve global mesh deformations and more involved shape derivatives.}

In the present paper, we propose a new method in the same spirit, i.e. relying on alternating set insertion and coefficient update steps, but exploit the fact that the set of minimizers to \eqref{def:BVprob} is, under mild assumptions, bounded in $L^\infty$, \tcr{see} \cite{BreIglMer22}. Proceeding, mutatis mutandis, as in \cite{CriIglWal23}, we arrive at a new surrogate incorporating pointwise constraints
\begin{align}
 \min_{u \in L^q(\Omega)}\left\lbrack F(Ku)+\TV(u,\Omega) \right\rbrack \quad \text{s.t.} \quad 0 \leq u \leq M_{\infty}
\end{align}
where again the constant $M_\infty$ is not \tcr{used in the algorithm, with the advantage that a bound in the function values and not in the total variation leads to the simpler} insertion problem
\begin{equation} \label{eq:prescribeintro}
  \bar{E}_k \in  \argmin_{E \subset \Omega} \left\lbrack -\int_{E} p_k \dd x+ \Per(E,\Omega) \right \rbrack.
\end{equation}
In comparison with the situation in \cite{CriIglWal23}, this requires solving only one prescribed mean curvature problem per iteration. Moreover, again in contrast to previous work, we split the set $\bar{E}_k$ into its indecomposable components \tcr{(a measure-theoretic analog of the connected components, see \cite{AmbCasMasMor01} for a complete treatment)} and add all of the resulting characteristic functions, which allows for greater flexibility in every iteration.

The main contributions of the present paper are twofold: \tcr{f}irst, again relying on the interpretation as an accelerated conditional gradient method, we derive global convergence of the resulting method together with sublinear rates for the objective functional values. Second, going beyond standard techniques, we are able to prove an asymptotic linear rate of convergence, matching numerical observations, provided that the optimal solution is piecewise constant \tcr{and} supported on a finite number of well separated sets, and the dual variable satisfies certain growth assumptions in terms of boundary deformations of those sets. These conditions are strongly inspired by the framework proposed in \cite{DecDuvPet24} for the analysis of total variation regularized inverse problems in the low noise regime. This second type of convergence rate result was missing even for the previous related methods, which by their direct use of extreme points are closer to the available literature on linear convergence guarantees for generalized conditional gradient methods, and in particular to \cite{BreCarFanWal24} whose methods we build upon.

\subsection{Notation and standing assumptions}
\tcr{The definition of the total variation in \eqref{eq:defTV} implies that if $\TV(u,\Omega)<\infty$ the distributional derivative $Du$ is an $\R^d$-valued Radon measure, and the space of $L^1(\Omega)$ functions such that this holds is denoted as $\BV(\Omega)$. The assumed regularity of the boundary of $\Omega$ implies \cite[Prop.~3.21]{AmbFusPal00} that it is an extension domain, namely, that the extension by zero of any function in $\BV(\Omega)$ belongs to $\BV(\R^d)$.}

Through the total variation defined in \eqref{eq:defTV} we can also define the perimeter in $\Omega$ of a subset $E \subset \Omega$ as $\Per(E,\Omega):=\TV(\1_E,\Omega)$ where $\1_E$ is the characteristic function of $E$, \tcr{and say that $E$ is of finite perimeter in $\Omega$ when this quantity is finite}. \tcr{The perimeter is invariant with respect to modifications of $E$ of Lebesgue measure zero, but we can take a representative of $E$ for which the topological boundary equals the support of the derivative of $\1_E$, which satisfies \cite[Prop.~12.19]{Mag12}
\begin{equation}\label{eq:bdy}\partial E = \supp D\1_E =\left\{ x \in \R^d \,\middle\vert\, 0< \frac{|E \cap B(x,r)|}{|B(x,r)|} < 1 \text{ for all } r>0\right\}.\end{equation}
In the rest of the paper, unless otherwise specified, we use this representative for sets of finite perimeter. Following \cite{AmbCasMasMor01}, w}e say that such a set $E$ is \textit{decomposable} if it admits a partition $E=E_1 \cup E_2$ with $E_1 \cap E_2 = \emptyset$ and $\Per(E,\Omega)=\Per(E_1,\Omega)+\Per(E_2,\Omega)$, and \textit{indecomposable} if no such decomposition \tcr{exists}. \tcr{By $\Omega$ being an extension domain,} we can consider any set $E \subset \Omega$ with $\Per(E,\Omega)<\infty$ as a finite perimeter set in $\R^d$ to apply \cite[Thm.~1]{AmbCasMasMor01} to obtain a decomposition of it into countably many indecomposable components. 

A thorough treatment of the total variation, associated function spaces, and minimization problems involving perimeters can be found in the monographs \cite{AmbFusPal00} and \cite{Mag12}. We will also make use of the distance of a point $x \in \R^d$ to a set $A \subset \R^d$ defined \tcr{by} $\dist(x,A) = \inf_{y \in A}|x-y|$, and the distance between two sets defined as $\dist(A,B) := \inf_{x\in A,y \in B}|x-y|$ for $A,B \subset \R^d$. \tcr{The closure of such a set is denoted by $\cl A$}. Finally, we denote the symmetric difference of two sets as $A \Delta B = A \setminus B \cup B \setminus A$.

We often consider functions $u \in \cC^m(B)$ for not necessarily open sets $B \subset \R^d$, which means that these can be extended to a function $v \in \cC^m(\R^d)$ which coincides with $u$ on $B$. This space can be endowed with the norm
\[\|u\|_{\cC^m(B)} := \inf \left\{\|v\|_{\cC^m(\R^d)} \, \middle \vert \, v \in \cC^m(\R^d)\text{ and }v \equiv u\text{ on }B\right\},\]
where in turn for $v \in \cC^m(O)$ with $O \subseteq \R^d$ open,
\[\|v\|_{\cC^m(O)} := \max_{|\beta|\leq m} \sup_{x \in O} \big|\partial^\beta v(x)\big|.\]
We say that an open set $G \subset \R^d$ has $\cC^m$ boundary if \tcr{for any $x \in \partial G$, there is $r>0$ such that }$\partial G \cap B(x,r)$ can be written as the graph of a $\cC^m$ function defined on some affine hyperplane in $\R^d$\tcr{, and $\partial G \cap B(x,r)$ is a subset of one of the half-spaces determined by this hyperplane}. For such a set $G$, there is a constant $c >0$ such that for any \tcr{function} $\varphi \in \tcr{\cC^m(\partial G)}$ with $\|\varphi\|_{\cC^m(\partial\tcr{G})}\leq c$, the map $\mathrm{Id}+\varphi\, \tcr{n_G}:\tcr{\partial G} \to \R^d$ \tcr{where $n_{G}$ is the outer normal vector} can be extended to a \tcr{$\cC^m$ function with continuous inverse} $\psi:\R^d \to \R^d$\tcr{, see \cite[Sec.~3.3.2]{DelZol11}.} We \tcr{then} define the deformation of a connected component $G^j$ of $G$ as 
\[(\varphi)_{\#}(G^j):= \psi(G^j),\]
which by virtue of $\varphi$ being defined on $\partial G$, does not depend on the choice of such $\psi$.

For an ordered finite set of sets of finite perimeter $\A=\{E^j\}^N_{j=1}$, \tcr{and with a small abuse of notation}, we further introduce
\begin{align}
 \mathcal{U}_{\A}(\lambda) \coloneqq \sum_{E^j \in \A} \lambda^j \1_{E^j}, \quad r_{\A}(\tcr{\lambda}) \coloneqq F(K\tcr{\mathcal{U}_{\A}(\lambda)})+ \sum_{E^j \in \A} \lambda^j \Per(E^j,\Omega) - \min \eqref{def:BVprob}.
\end{align}

Regarding the fidelity term $F$ as well as the forward operator $K$ in \eqref{def:BVprob}, the following standing assumptions are made.
\begin{assumption}\label{ass:functions}
For a separable Hilbert space $Y$ with inner product $(\cdot, \cdot)_Y$ and induced norm $\|\cdot\|_Y$, assume that:
\begin{enumerate}[label=\textbf{A\arabic*}]
    \item \label{item:assonK} The operator $K \colon L^q(\Omega) \to Y$ is \tcr{sequentially} weak-to-strong continuous, \tcr{i.e. for $u \in L^q(\Omega)$ we have that $u_n \rightharpoonup u$ implies $\Rightarrow Ku_n \rightarrow Ku.$}
    \item \label{item:assonF} The mapping $F \colon Y \to \R_+ $ is strictly convex and continuously Fr\'echet differentiable. Its gradient $\nabla F \colon Y \to Y $ is Lipschitz-continuous, i.e. there is $L_{\nabla F} >0$ such that
    \begin{equation}
        \|\nabla F(y_1)-\nabla F(y_2)\|_Y \leq L_{\nabla F} \|y_1-y_2\|_{Y} \quad \text{for all} \quad y_1,y_2 \in Y.
    \end{equation}
    \item\label{item:assonsublevel} The sublevel sets $E_J(u) \coloneqq \{\,v \in L^q(\Omega)\;|\; J(v) \leq J(u) \,\}$ are bounded for every $u \in \BV(\Omega)$.
\end{enumerate}
\end{assumption}
Note that Assumption \eqref{item:assonsublevel} together with $\Per(\1_\Omega,\Omega)=0$ implies $K \1_{\Omega} \neq 0$. Vice versa, given the latter, we can formulate sufficient conditions on $F$ such that Assumption \eqref{item:assonsublevel} holds, see, e.g., \cite{CriIglWal23}. Since, in the following, we will only rely on the boundedness of the sublevel sets of $J$, we prefer to work with \eqref{item:assonsublevel} instead of more specific conditions.

Finally, we emphasize that quadratic fidelity terms $F(y)=(1/2) \|\cdot-y_d\|^2_Y$ satisfy Assumption \ref{ass:functions} if $K \1_{\Omega} \neq 0$.

The remainder of the paper is structured as follows: In Section \ref{sec:start}, we collect relevant results regarding existence and properties of minimizers to \eqref{def:BVprob} and prescribed curvature problems of the form \eqref{eq:prescribeintro}. Section \ref{section:onecut} introduces the new algorithm, proving its global convergence as well the asymptotic, improved convergence behavior. The paper is concluded by applying the presented method to PDE-constrained minimization problems in Section \ref{sec:numerics}.
\section{Existence of minimizers and optimality conditions} \label{sec:start} 
From Assumption \ref{ass:functions}, we conclude that the sublevel sets of $J$ are weakly compact. Hence, existence of minimizers to \eqref{def:BVprob} follows by standard arguments. We skip the proof of existence for the sake of brevity\tcr{, with the bound below following directly from Assumption \eqref{item:assonsublevel}}.
\begin{theorem} \label{thm:existence}
Problem \eqref{def:BVprob} admits at least one minimizer and we have $K \bar{u}_1=K \bar{u}_2$ for all solutions $\bar{u}_1, \bar{u}_2$ to \eqref{def:BVprob}. Moreover, there is $M_q >0$ such that $\|\bar{u} \|_{L^q(\Omega)} \leq M_q$ for any solution $\bar{u}$ of \eqref{def:BVprob}.
\end{theorem}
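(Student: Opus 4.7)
The plan is to deploy the direct method of the calculus of variations for existence, strict convexity of $F$ for the uniqueness of $K\bar u$, and a sublevel-set argument using \eqref{item:assonsublevel} for the $L^q$-bound.

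For existence, I would take a minimizing sequence $\{u_n\} \subset L^q(\Omega)$ with $u_n \geq 0$ and $J(u_n) \to \inf J$. Since $F \geq 0$, the values $\TV(u_n,\Omega)$ are uniformly bounded, and since eventually $J(u_n) \leq J(\1_\Omega) + 1$ (using $\Per(\Omega,\Omega)=0$ and $K\1_\Omega \neq 0$ so that $\1_\Omega$ is admissible with finite $J$), assumption \eqref{item:assonsublevel} forces $\{u_n\}$ to be bounded in $L^q(\Omega)$, hence bounded in $\BV(\Omega)$. By the usual $\BV$ compactness I would pass to a (non-relabeled) subsequence converging in $L^1(\Omega)$ and weakly in $L^q(\Omega)$ to some $\bar u$; the pointwise constraint $\bar u \geq 0$ is preserved by $L^1$-convergence, and lower semicontinuity of the total variation gives $\TV(\bar u,\Omega) \leq \liminf \TV(u_n,\Omega)$. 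By \eqref{item:assonK} the weak $L^q$-convergence yields $Ku_n \to K\bar u$ strongly in $Y$, and continuity of $F$ from \eqref{item:assonF} then gives $F(Ku_n) \to F(K\bar u)$. Combining these ingredients produces $J(\bar u) \leq \liminf J(u_n) = \inf J$, so $\bar u$ is a minimizer.

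For uniqueness of $K\bar u$, I would argue by contradiction using the strict convexity of $F$ in \eqref{item:assonF}. Given two minimizers $\bar u_1, \bar u_2$ with $K\bar u_1 \neq K\bar u_2$, the convex combination $\bar u = (\bar u_1 + \bar u_2)/2$ still satisfies $\bar u \geq 0$, and strict convexity yields $F(K\bar u) < \tfrac{1}{2}F(K\bar u_1) + \tfrac{1}{2}F(K\bar u_2)$, while convexity of $\TV(\cdot,\Omega)$ gives the non-strict analog. Adding these contradicts the optimality of $\bar u_1, \bar u_2$.

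Finally, for the uniform $L^q$-bound, I would observe that every minimizer $\bar u$ lies in the sublevel set $E_J(\bar u_\star)$ of any fixed minimizer $\bar u_\star$, which satisfies $\bar u_\star \in \BV(\Omega)$ since $\TV(\bar u_\star,\Omega) < \infty$ and $\bar u_\star \in L^q(\Omega) \subset L^1(\Omega)$. Assumption \eqref{item:assonsublevel} then provides a constant $M_q$, independent of the particular minimizer, with $\|\bar u\|_{L^q(\Omega)} \leq M_q$. No single step here is a genuine obstacle; the only subtlety is ensuring that one member of the minimizing class lies in $\BV(\Omega)$ so that \eqref{item:assonsublevel} is applicable, which is automatic from finiteness of $J$.
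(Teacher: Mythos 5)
Your proof is correct and uses exactly the standard direct-method / strict-convexity / sublevel-set argument that the paper alludes to when it writes that weak compactness of sublevel sets follows from Assumption~\ref{ass:functions} and then explicitly skips the proof for brevity. One microscopic cleanup: since \eqref{item:assonsublevel} is stated only for sublevel sets $E_J(u)$ with $u \in \BV(\Omega)$, it is cleaner to note that eventually $J(u_n) \leq J(0)$ (or else $0$ is already a minimizer), so that $u_n \in E_J(0)$, rather than invoking the level $J(\1_\Omega)+1$, which is not a priori of the form $J(u)$ for some $u \in \BV(\Omega)$; likewise the mention of $K\1_\Omega \neq 0$ is superfluous for admissibility of $\1_\Omega$.
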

Given $p \in L^d(\Omega)$, we will heavily rely on properties of minimizers to the associated prescribed mean curvature problem 
\begin{equation} \label{def:prescribedcurvature}
    \min_{E \subset \Omega} \left\lbrack -\int_E p \dd x + \Per(E,\Omega) \right \rbrack,\tag{$\cal{MC}$}
\end{equation}
\tcr{for which we collect known results on existence and regularity of minimizers in Proposition \ref{prop:existenceMC}. The significance of this problem is also} foreshadowed by the following first-order optimality condition (\cite[Prop.~3]{ChaDuvPeyPoo17}, \cite[Lem.~1]{BreIglMer22}):
\begin{proposition} \label{prop:firstorderopt}
Given $\bar{u} \in \BV(\Omega)$, $\bar{u} \geq 0$, as well as $\bar{p}\coloneqq-K^* \nabla F(K \bar{u}) \in L^d(\Omega)$. Then $\bar{u}$ is a minimizer of \eqref{def:BVprob} if and only if one of the following three, equivalent, properties hold: 
\begin{itemize}
    \item We have $\int_{\tcr{\Omega}}\bar{p}u \dd x \leq \TV(u,\Omega)$ for all $u \in L^q(\Omega)$ with $u \geq 0$, and $\int_\Omega \bar{p} \bar{u} \dd x= \TV(\bar{u},\Omega)$.
    \item We have $\int_{\tcr{\Omega}} \bar{p}u \dd x \leq \TV(u,\Omega)$ for all $u \in L^q(\Omega)$ with $u \geq 0$, and for a.e. $s\geq 0$ the level set $E^s \coloneqq \{\,x \in \Omega\;|\;u(x)>s\,\}$ satisfies $\int_{E^s} \bar{p} \dd x= \Per(E^s,\Omega)$.
    \item For a.e. $s\geq 0$, the level set $E^s$ is a minimizer of \eqref{def:prescribedcurvature} for $p=\bar{p}$.
\end{itemize}
\end{proposition}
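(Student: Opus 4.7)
The plan is to establish the cycle of implications ($\bar u$ minimizes \eqref{def:BVprob}) $\Rightarrow$ (i) $\Rightarrow$ (ii) $\Rightarrow$ (iii) $\Rightarrow$ ($\bar u$ minimizes \eqref{def:BVprob}), leveraging throughout the convex gradient inequality afforded by $F$, the coarea formula, and the layer-cake decomposition for nonnegative functions.

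First, since $F$ is convex and Fr\'echet differentiable with $\bar p = -K^*\nabla F(K\bar u)$, one has
$J(u) \geq J(\bar u) + \int_\Omega \bar p(\bar u - u)\dd x + \TV(u,\Omega) - \TV(\bar u,\Omega)$
for every $u \in L^q(\Omega)$. If $\bar u$ minimizes \eqref{def:BVprob}, the right-hand side is nonnegative on the cone $\{u \geq 0\}$, so $u \mapsto \int_\Omega \bar p u\dd x - \TV(u,\Omega)$ attains its supremum on that cone at $\bar u$; testing $u = 0$ and $u = 2\bar u$ pins that supremum value to $0$, which is exactly the content of (i).

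The passage (i) $\Rightarrow$ (ii) uses the coarea formula $\TV(u,\Omega) = \int_0^\infty \Per(E^s_u,\Omega)\dd s$ together with the layer-cake identity $\int_\Omega \bar p u\dd x = \int_0^\infty \int_{E^s_u}\bar p\dd x\dd s$, both valid for $u \geq 0$; the second is legitimate because $\bar p \in L^d(\Omega)$ and $u \in L^q(\Omega)$ give $\bar p u \in L^1(\Omega)$ via H\"older. Specializing the first part of (i) to $u = t\,\1_E$ for $t>0$ and dividing by $t$ yields $\int_E \bar p\dd x \leq \Per(E,\Omega)$ for every finite-perimeter set $E \subset \Omega$. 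The equality in (i) then rewrites as $\int_0^\infty [\Per(\bar E^s,\Omega) - \int_{\bar E^s}\bar p\dd x]\dd s = 0$ with a.e. nonnegative integrand, which therefore vanishes a.e., giving the level-set identity in (ii). Passing to (iii) is then a direct reading: the inequality in (ii) says that the objective of \eqref{def:prescribedcurvature} is $\geq 0$ on every admissible set, and $E = \emptyset$ achieves the value $0$, so the minimum is exactly $0$; the equality in (ii) certifies that $\bar E^s$ attains it.

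To close the cycle, for the implication (iii) $\Rightarrow$ minimizer the convex inequality is applied once more: for $u \geq 0$, coarea and layer-cake (applied to both $u$ and $\bar u$) recast its right-hand side as
$\int_0^\infty \Big([\Per(E^s_u,\Omega) - \int_{E^s_u}\bar p\dd x] - [\Per(\bar E^s,\Omega) - \int_{\bar E^s}\bar p\dd x]\Big)\dd s,$
whose integrand is a.e. nonnegative under (iii), since $\bar E^s$ minimizes \eqref{def:prescribedcurvature} for a.e. $s$; hence $J(u) \geq J(\bar u)$. The only mildly delicate point is the simultaneous use of coarea on the BV function $\bar u$ and on an arbitrary competitor $u \in L^q(\Omega) \cap \{u\geq 0\}$: if $u \notin \BV(\Omega)$ the inequality $J(u) \geq J(\bar u)$ is trivial because $\TV(u,\Omega)=+\infty$, so one may restrict to $\BV(\Omega)$ where both coarea integrals are finite and the manipulation is rigorous.
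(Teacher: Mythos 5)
The paper does not actually prove this proposition: it is cited from \cite[Prop.~3]{ChaDuvPeyPoo17} and \cite[Lem.~1]{BreIglMer22}. So there is no internal argument to compare against, and I evaluate your proof on its own merits.

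Your cyclic structure, the coarea and layer-cake computations (with the H\"older justification for Fubini), the passage (ii) $\Rightarrow$ (iii), and the closing implication (iii) $\Rightarrow$ minimizer are all correct. The gap is in the opening link, (minimizer) $\Rightarrow$ (i). You invoke the convex gradient inequality
\[
J(u)\;\geq\; J(\bar u)\;+\;\int_\Omega \bar p(\bar u - u)\dd x \;+\; \TV(u,\Omega)-\TV(\bar u,\Omega),
\]
and then assert that minimality makes ``the right-hand side nonnegative,'' from which you conclude that $\Phi(u):=\int_\Omega \bar p\,u\dd x-\TV(u,\Omega)$ attains its supremum over the cone $\{u\geq 0\}$ at $\bar u$. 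Neither step follows. The inequality says $J(u)\geq J(\bar u)-\bigl(\Phi(u)-\Phi(\bar u)\bigr)$; combined with $J(u)\geq J(\bar u)$ this yields $J(u)\geq \max\bigl\{J(\bar u),\,J(\bar u)-(\Phi(u)-\Phi(\bar u))\bigr\}$, which places no constraint on the sign of $\Phi(u)-\Phi(\bar u)$. The global Bregman-type minorization of $J$ simply points the wrong way for extracting the subdifferential inclusion from minimality.

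The missing ingredient is the standard first-order perturbation. For any $v\geq 0$ in $\BV(\Omega)$ and $t\in(0,1]$, set $u_t:=(1-t)\bar u+tv\geq 0$. Fr\'echet differentiability of $F$ gives $F(Ku_t)=F(K\bar u)-t\int_\Omega\bar p(v-\bar u)\dd x+o(t)$, and convexity of $\TV$ gives $\TV(u_t,\Omega)\leq(1-t)\TV(\bar u,\Omega)+t\TV(v,\Omega)$, hence
\[
J(u_t)\;\leq\; J(\bar u)\;-\;t\bigl(\Phi(v)-\Phi(\bar u)\bigr)\;+\;o(t).
\]
Minimality of $\bar u$ forces $\Phi(v)\leq\Phi(\bar u)$ for all $v\geq 0$ (the case $v\notin\BV(\Omega)$ being trivial). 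Only now does your ``testing $u=0$ and $u=2\bar u$'' step pin the common value to zero and yield (i). With this correction the remainder of your argument goes through.
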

We emphasize that the optimal dual variable $\bar{p}$ is fully characterized by the optimal observation $ \bar{y}=K \bar{u}$ which is the same for every minimizer to \eqref{def:BVprob}, see Theorem \ref{thm:existence}. 

Note that solutions to \eqref{def:prescribedcurvature} are far from being unique, in fact the previous proposition tells us that \textit{all} level sets of \tcr{a solution of \eqref{def:BVprob}} are solutions to \eqref{def:prescribedcurvature} for the same $p=\bar{p}$.
\begin{lemma}\label{lem:maxmin}
Unions and intersections of minimizers of \eqref{def:prescribedcurvature} are still minimizers, and there is a unique maximal one with respect to inclusion.
\end{lemma}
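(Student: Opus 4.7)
My plan rests on the well-known submodularity of the perimeter for sets of finite perimeter, namely
\[\Per(A \cup B, \Omega) + \Per(A \cap B, \Omega) \leq \Per(A, \Omega) + \Per(B, \Omega),\]
combined with the pointwise identity $\1_{A \cup B} + \1_{A \cap B} = \1_A + \1_B$ which gives additivity of the linear term $E \mapsto \int_E p \dd x$. Writing $\mathcal{F}(E) := -\int_E p \dd x + \Per(E,\Omega)$ and denoting by $m$ the minimum value of \eqref{def:prescribedcurvature}, for any two minimizers $A,B$ we obtain
\[\mathcal{F}(A \cup B) + \mathcal{F}(A \cap B) \leq \mathcal{F}(A) + \mathcal{F}(B) = 2m,\]
while each summand on the left is at least $m$. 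Equality therefore holds throughout, proving the first statement.

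For the existence of a maximal minimizer I would argue by exhaustion of mass. Set $m^* := \sup\{|E| \,:\, E \text{ minimizer of }\eqref{def:prescribedcurvature}\} \leq |\Omega|$ and choose minimizers $E_n$ with $|E_n| \to m^*$. Define $F_n := \bigcup_{k=1}^n E_k$, which by induction on $n$ and the union part already proved is itself a minimizer, and is monotone increasing with $|E_n| \leq |F_n| \leq m^*$. Let $F^* := \bigcup_n F_n$; then $\1_{F_n} \to \1_{F^*}$ in $L^1(\Omega)$ by monotone convergence, which makes $\int_{F_n} p \dd x \to \int_{F^*} p \dd x$ and, by lower semicontinuity of the perimeter, $\Per(F^*,\Omega) \leq \liminf_n \Per(F_n,\Omega)$. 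Combining these yields $\mathcal{F}(F^*) \leq m$, so $F^*$ is a minimizer, and clearly $|F^*| = m^*$.

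To check maximality, let $G$ be any other minimizer. By the first part, $F^* \cup G$ is again a minimizer, so $|F^* \cup G| \leq m^* = |F^*|$, forcing $|G \setminus F^*| = 0$, i.e. $G \subseteq F^*$ up to a Lebesgue-null set. This simultaneously establishes both maximality and uniqueness of $F^*$ among maximal elements.

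The only genuine subtlety I anticipate is the submodularity inequality itself, which is not completely trivial but is standard in the $BV$ literature (a short proof uses the coarea formula together with the identity $\1_{A \cup B} + \1_{A \cap B} = \1_A + \1_B$ applied on each level $t \in (0,1)$, or alternatively the approximation by smooth functions and the chain rule); I would simply cite this from \cite{AmbFusPal00} or \cite{Mag12} rather than reprove it. Everything else reduces to $L^1$-convergence along a monotone sequence and the standard lower semicontinuity of perimeter, which are robust.
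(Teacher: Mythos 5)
Your proposal is correct and takes essentially the same route as the paper's own one-line proof, which likewise reduces the union and intersection property to submodularity of the perimeter (citing it rather than reproving it) and then states that existence of the maximal minimizer ``follows directly.'' Your exhaustion-of-mass argument with monotone unions, $L^1$ convergence of indicators, and lower semicontinuity of the perimeter is precisely the standard way to make that last step explicit.
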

\begin{proof}
The intersection and union property is a direct consequence of submodularity of the perimeter, see for example \cite[Prop~3.3]{IglMer21} or \cite[Prop.~3.3]{DecDuvPet24}. The existence of the maximal minimizer then follows directly.
\end{proof}
In several points of the analysis, it will be important for us to know that the indecomposable components of minimizers of \eqref{def:prescribedcurvature} along a sequence of curvatures do not degenerate either in mass or in perimeter:
\begin{lemma}\label{lem:upperlowerbounds}
Let $p_n$ be a strongly converging sequence in $L^d(\Omega)$. Then there are constants $c,C>0$ for which
\begin{equation}
c \leq \big|\tcr{\bar{E}^m_n}\big| \leq C, \quad \Per\big(\tcr{\bar{E}^m_n}, \Omega\big) \leq C,
\end{equation}
where $\tcr{\bar{E}^m_n}$ is any indecomposable component of any nontrivial minimizer $\bar{E}_n$ of \eqref{def:prescribedcurvature} with $p_n$ satisfying $|\bar{E}_n| \in (0,|\Omega|)$. If additionally there is $n_0$ for which $\Per\big(\bar{E}_n, \Omega\big) = \Per\big(\bar{E}_n, \R^d\big)$ for all $n \geq n_0$, then also
\begin{equation}
\Per\big(\tcr{\bar{E}^m_n}, \Omega\big) \geq d \big(|B(0,1)|c\big)^{\frac{1}{d}} >0 \quad \text{for all }n \geq n_0.
\end{equation}
\end{lemma}
\begin{proof}
\tcr{Notice that by optimality and the fact that $\bar{E}_n$ decomposes into the $\tcr{\bar{E}^m_n}$,
\[\Per(\tcr{\bar{E}^m_n}, \Omega) \leq \int_{\tcr{\bar{E}^m_n}} p_n \dd x \quad \text{for all }\tcr{m},\]
since if this would not hold for some $\bar{E}^{j_0}_n$, then $\bar{E}_n \setminus \bar{E}^{j_0}_n$ would have a lower cost than $\bar{E}_n$ (that is, indecomposable components of minimizers of \eqref{def:prescribedcurvature} are also minimizers, as observed for example in \cite[Rem.~4]{ChaDuvPeyPoo17}). The previous inequality implies
\begin{equation}\label{eq:perholder}
\Per(\tcr{\bar{E}^m_n}, \Omega) \leq \big|\tcr{\bar{E}^m_n}\big|^{\frac{d-1}{d}} \|p_n\|_{L^d(\tcr{\bar{E}^m_n})},
\end{equation}
which in combination with $\bar{E}^m_n \subset \Omega$ implies the upper bounds.}

For the lower bound on mass, we can assume that $0<|\tcr{\bar{E}^m_n}| \leq |\Omega|/2$. \tcr{Using} the Sobolev inequality \tcr{\cite[Thm.~3.47]{AmbFusPal00}} $\TV(u,\Omega)\geq C(\Omega)\|u - [u]_{\Omega}\|_{L^{d/(d-1)}(\Omega)}$ for all $u \in \BV(\Omega)$, as in \cite[Sec.~4.3]{IglMerSch18} and \cite[Sec.~6]{IglMer20} \tcr{and the binomial theorem} we obtain for all $E \subset \Omega$ that
\begin{equation}\begin{aligned}
\Per(\tcr{\bar{E}^m_n},\Omega) &\geq C(\Omega)\Bigg(\big|\tcr{\bar{E}^m_n}\big|\frac{\big|\Omega \setminus \tcr{\bar{E}^m_n}\big|^{\frac{d}{d-1}}}{|\Omega|^{\frac{d}{d-1}}} + \big|\Omega \setminus \tcr{\bar{E}^m_n}\big|\frac{\big|\tcr{\bar{E}^m_n}\big|^{\frac{d}{d-1}}}{|\Omega|^{\frac{d}{d-1}}}\Bigg)^{\frac{d-1}{d}} \\ &\tcr{= C(\Omega)\left(\frac{\big|\tcr{\bar{E}^m_n}\big|\big|\Omega \setminus \tcr{\bar{E}^m_n}\big|}{|\Omega|}\right)^{\frac{d-1}{d}} \left(\frac{\big|\tcr{\bar{E}^m_n}\big|^{\frac{1}{d}}+\big(\big|\Omega\big| - \big|\tcr{\bar{E}^m_n}\big|\big)^{\frac{1}{d}}}{|\Omega|^{\frac{1}{d}}}\right)^{\frac{d-1}{d}} } 
\\ &\tcr{\geq} C(\Omega)\left(\frac{\big|\tcr{\bar{E}^m_n}\big|\big|\Omega \setminus \tcr{\bar{E}^m_n}\big|}{|\Omega|}\right)^{\frac{d-1}{d}},
\end{aligned}\end{equation}
which by $0<|\tcr{\bar{E}^m_n}| \leq |\Omega|/2$ and in combination with \eqref{eq:perholder} gives
\[\frac{C(\Omega)}{2^{(d-1)/d}} \leq \|p_n\|_{L^d(\tcr{\bar{E}^m_n})}.\]
But since by their strong convergence the $p_n$ are equiintegrable in $L^d(\Omega)$, there is some $c_0 \in (0,1/2)$ such that if $|E|<c_0$ then $\|p_n\|_{L^d(E)} < C(\Omega)/2^{(d-1)/d}$ for all $n$, which immediately gives a contradiction. This proves that $|\tcr{\bar{E}^m_n}| \geq c_0$. Choosing $c = \min(c_0,|\Omega|/2)$ we obtain the claimed lower bound.

In case $\Per\big(\bar{E}_n, \Omega\big) = \Per\big(\bar{E}_n, \R^d\big)$ we also have $\Per\big(\tcr{\bar{E}^m_n}, \Omega\big) = \Per\big(\tcr{\bar{E}^m_n}, \R^d\big)$ for all $j$. Since $|\tcr{\bar{E}^m_n}\big| < \infty$, by the isoperimetric inequality in $\R^d$ \cite[Thm.~14.1]{Mag12} and the previous bound we get
\[\Per\big(\tcr{\bar{E}^m_n}, \Omega\big) \geq d|B(0,1)|^{\frac{1}{d}} |\tcr{\bar{E}^m_n}|^{\frac{1}{d}} \geq d \big(|B(0,1)|c\big)^{\frac{1}{d}}.\]
\end{proof}

\tcr{The preceding bounds imply} that minimizers of \eqref{def:BVprob} are essentially bounded:
\begin{lemma} \label{lem:uniformlinfty}
There is $M_\infty >0$ such that $\|\bar{u} \|_{L^\infty(\Omega)} \leq M_\infty$ for any solution $\bar{u}$ of \eqref{def:BVprob}.
\end{lemma}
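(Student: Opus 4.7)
The plan is to exploit the level-set form of the optimality condition (third bullet of Proposition \ref{prop:firstorderopt}) together with the relative isoperimetric inequality in $\Omega$ to show that for every solution $\bar u$ the superlevel sets $E^s=\{\bar u>s\}$ must be empty once $s$ is sufficiently large. The key observation is that by Theorem \ref{thm:existence}, $K\bar u=\bar y$ is the same for every minimizer, hence $\bar p:=-K^\ast\nabla F(\bar y)\in L^d(\Omega)$ is a single fixed function, independent of which minimizer $\bar u$ we picked. Consequently any bound obtained in terms of $\bar p$ will automatically be uniform over all solutions.

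First I would fix an arbitrary minimizer $\bar u$ and, for a.e.\ $s>0$, combine the first-order condition $\int_{E^s}\bar p\dd x=\Per(E^s,\Omega)$ with Hölder's inequality in the form
\begin{equation*}
 \Per(E^s,\Omega)=\int_{E^s}\bar p\dd x \leq \Big(\int_{E^s}|\bar p|^d\dd x\Big)^{1/d}|E^s|^{(d-1)/d}.
\end{equation*}
Next, using the $L^q$ bound from Theorem \ref{thm:existence} and Chebyshev's inequality, I obtain $|E^s|\leq M_q^q/s^q$, so in particular $|E^s|\leq |\Omega|/2$ for all $s\geq s_0$ with $s_0$ depending only on $M_q$ and $|\Omega|$. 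The relative isoperimetric inequality on the Lipschitz domain $\Omega$ then gives a constant $C_\Omega>0$ with
\begin{equation*}
 |E^s|^{(d-1)/d}=\min(|E^s|,|\Omega\setminus E^s|)^{(d-1)/d}\leq C_\Omega\,\Per(E^s,\Omega).
\end{equation*}
Chaining the two displays, and cancelling $|E^s|^{(d-1)/d}$ under the assumption $|E^s|>0$, yields
\begin{equation*}
 1\leq C_\Omega\Big(\int_{E^s}|\bar p|^d\dd x\Big)^{1/d}.
\end{equation*}

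The final step is an absolute-continuity-of-the-integral argument applied to $|\bar p|^d\in L^1(\Omega)$: since $|E^s|\to 0$ as $s\to\infty$, there exists $M_\infty\geq s_0$, depending only on $\bar p$, $M_q$ and $\Omega$, such that $\int_{E^s}|\bar p|^d\dd x<C_\Omega^{-d}$ for all $s>M_\infty$. For such $s$ the displayed inequality forces $|E^s|=0$, which means $\bar u\leq M_\infty$ almost everywhere in $\Omega$. Because $\bar p$ and $M_q$ do not depend on the chosen minimizer, $M_\infty$ is uniform over all solutions, which is exactly the claim. I do not anticipate any serious obstacle; the main thing to be careful about is distinguishing the minority-volume case $\min(|E^s|,|\Omega\setminus E^s|)=|E^s|$, which is guaranteed for $s$ large by the a priori $L^q$-bound.
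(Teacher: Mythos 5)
Your proof is correct and reproduces, in expanded form, exactly the argument that the paper delegates to its citation of \cite[Prop.~2]{BreIglMer22}: the chain level-set optimality $\to$ H\"older in $L^d \times L^{d/(d-1)}$ $\to$ relative isoperimetric inequality $\to$ absolute continuity of the integral of $|\bar p|^d$, together with the crucial observation that $\bar p$ is determined by the common observation $\bar y$ and hence the resulting threshold is uniform over all minimizers. (Two cosmetic remarks: you invoke the ``third bullet'' of Proposition~\ref{prop:firstorderopt} but actually use the level-set equality from the second bullet, which is harmless as they are stated to be equivalent; and the cancelled quantity in passing from the two displayed inequalities to $1 \leq C_\Omega \|\bar p\|_{L^d(E^s)}$ is $\Per(E^s,\Omega)$ rather than $|E^s|^{(d-1)/d}$, though the conclusion is the same.) The argument also implicitly requires $d \ge 2$ for the H\"older/isoperimetric pairing; for $d=1$ the exponent $q$ is already $\infty$ and the claim is immediate from Theorem~\ref{thm:existence}, so nothing is lost.
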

\begin{proof}
Since $\bar{p} \in L^d(\Omega)$ and this dual variable is determined by $K \bar{u}$ \tcr{which is the same for all possible solutions of \eqref{def:BVprob}, using Proposition \ref{prop:firstorderopt} we get that the corresponding level sets $E^s$ are minimizers of \eqref{def:prescribedcurvature} with the same $\bar{p}$. However, by the lower bounds of Lemma \ref{lem:upperlowerbounds} we also have that there is some $c>0$ such that $|E^s| > c$ for all $s$ for which $|E^s| \neq 0$, where this $c$ only depends on $\bar{p}$. Since the level sets are nested and decreasing in $s$, this means that there is some $s_0$ such that $|E^s| = 0$ for all $s \geq s_0$, since otherwise it could not be that $\bar{u} \in L^1(\Omega)$. This $s_0$ is precisely the required bound. We point to \cite{BreIglMer22} for further extensions and consequences of such computations for $L^\infty$ bounds.}
\end{proof}

\section{A one-cut generalized conditional gradient method} \label{section:onecut}
Finally, we introduce an algorithm in the spirit of \cite{CriIglWal23} which alternates between updating an active set of sets $\mathcal{A}_k $ and an iterate $u_k$ \tcr{which is} a conic combination of the characteristic functions of the elements of the former. In contrast to this prior work, however, we only require the solution of a single prescribed curvature problem \tcr{at every iteration}, 
\begin{align}
  \bar{E}_k \in \argmin_{E \subset \Omega} \left \lbrack- \int_E p_k \dd x + \Per(E, \Omega)  \right \rbrack \quad \text{where} \quad p_k=-K^* \nabla F(Ku_k),
\end{align}
instead of a sequence of similar problems as described in \eqref{eq:dinkelbach}. The resulting method is summarized in Algorithm \ref{alg:abstractonecut}. Note that, instead of adding the computed set directly to $\A_k$, we first decompose $\bar{E}_k$ into its finitely many indecomposable components and use the latter for the update of the active set. This decomposition is finite because, by $p_k \in L^d(\Omega)$ and Lemma \ref{lem:upperlowerbounds}, there is a lower bound on the volume of each component. Modifying the algorithm in this way allows for more \tcr{flexible} updates of the iterate \tcr{since the coefficients for each component are decoupled}, a refined convergence analysis and, eventually, linear convergence of the resulting method.  
\begin{algorithm}[ht]
\setstretch{1.15}
\caption{One-Cut FC-GCG for Problem~\eqref{def:BVprob}}\label{alg:abstractonecut}
 \KwInput{$u_0=0$, $\mathcal A_0=\emptyset$}
 \For{$k=0,1,2,...$}{
   $p_k\leftarrow -K^*\nabla F(K u_k)$\\
   Find $\bar{E}_k$ with
   \begin{equation}
     \bar{E}_k \in \argmin_{E \subset \Omega} \left \lbrack -\int_E p_k \dd x+ \Per(E,\Omega)\right\rbrack. 
   \end{equation}
   \If{$\int_{\bar{E}_k} p_k \dd x \leq \Per(\bar{E}_k,\Omega)$}
   {Terminate with a solution $\bar{u}=u_k$ to~\eqref{def:BVprob}}
   Decompose $\bar{E}_k= \bigcup^{n_k}_{\tcr{m}=1} \tcr{\bar{E}^m_k}$, $\tcr{\bar{E}^m_k}$ indecomposable. \\
   Update active set:
   \begin{equation}
     \A_{k,+} \leftarrow \left\{E^j_{k,+}\right\}^{\#\A_{k,+}}_{j=1} \coloneqq \A_k \cup \{\tcr{\bar{E}^m_k}\}^{n_k}_{\tcr{m}=1}. 
   \end{equation}\\
Update iterate:
   \begin{align}
   &\lambda_{k,+} \in \argmin_{\lambda \geq 0} \Bigg\lbrack F\left(K\mathcal{U}_{\mathcal{A}_{k,+}}(\lambda)\right)+ \sum_{E^j_{k,+} \in \A_{k,+}}\lambda^j \Per(E^j_{k,+},\Omega) \Bigg \rbrack,\\
   &u_{k+1}\leftarrow \mathcal{U}_{\mathcal{A}_{k,+}}(\lambda_{k,+}).
   \end{align}
 $\A_{k+1} \leftarrow \A_{k,+}  \setminus \left\{\,E^j_{k,+}\;\middle\vert\;\lambda^j_{k,+}=0\,\right\}$
 }
\setstretch{1}
\end{algorithm}
The remainder of this section is dedicated to its convergence analysis, starting with the derivation of a global, but slow, rate of convergence for the residual
\begin{align}
  r_J(u) \coloneqq J(u)- \min \eqref{def:BVprob}
\end{align}
in Section \ref{subsec:sublinear}, before proving an accelerated but asymptotic behavior in Section \ref{subsec:linear} under additional structural assumptions.

Throughout the following, we assume that Algorithm \ref{alg:abstractonecut} does not terminate after finitely many steps. \tcr{By construction, this implies that Algorithm \ref{alg:abstractonecut} generates sequences $u_k$, $p_k$, $\A_k=\{E^j_k\}^{\# \A_k}_{j=1}$ as well as $y_k =Ku_k$ and $\lambda_k \in \R^{\# \A_k}  $, $k \in \N$ where each $E^j_k$ is indecomposable and
\begin{equation} \label{eq:formulauk}
  u_k= \mathcal{U}_{\A_k}(\lambda_k), \ \  \lambda^j_k >0, \ \  \lambda_k \in \argmin_{\lambda \geq 0} \Bigg\lbrack F\left(K\mathcal{U}_{\mathcal{A}_k}(\lambda)\right)+ \sum_{E^j_k \in \A_k}\lambda^j \Per(E^j_k,\Omega) \Bigg\rbrack.
\end{equation}
Note that the positivity of the coefficients is ensured by the pruning step in the last line of Algorithm \ref{alg:abstractonecut}. Moreover, the latter also implies}
\begin{equation}
    \tcr{u_{k+1}=\mathcal{U}_{\A_{k+1}}(\lambda_{k+1})=\mathcal{U}_{\A_{k,+}}(\lambda_{k,+}).}
\end{equation}
\begin{remark} \label{rem:extension}
 From a practical perspective, it might be advantageous to require $\Omega \in \mathcal{A}_k$ for every $k \in \N$, i.e. the constant function $\1_\Omega$, with $\Per(\Omega, \Omega)=0$ is inserted a priori, never removed afterwards and the associated coefficient $\lambda_\Omega \geq 0$ is optimized in every iteration. Moreover, this also represents an elegant way to extend the presented method to problems without nonnegativity constraints. In fact, optimizing $\lambda_\Omega$ without constraints \tcr{ we obtain $\int_\Omega p_k  \dd x=0 $ for all $k \geq 1$ by inspecting the first-order optimality conditions, see e.g. \cite{CriIglWal23},} and thus
 \begin{equation}
     \int_{E} p_k \dd x= -\int_{\Omega\setminus E} p_k \dd x,\quad \Per(E,\Omega)=\Per(\Omega\setminus E,\Omega) \quad \text{for all} \quad E \subset \Omega. 
 \end{equation}
 Consequently, characteristic functions with a negative sign are introduced implicitly by inserting the complement of the corresponding set and optimizing $\lambda_\Omega$.  
\end{remark}
\subsection{A motivation of Algorithm \ref{alg:abstractonecut}}\label{sec:derivation}
\tcr{Before proceeding to the convergence analysis of Algorithm \ref{alg:abstractonecut}, we give a motivation of it in the perspective of generalized conditional gradient methods (GCG), as first introduced in \cite{MinFuk81}. As a starting point, recall that the set of minimizers to \eqref{def:BVprob} is bounded in $L^\infty$ by a constant $M_\infty$. As a consequence, modifying \eqref{def:BVprob} by adding pointwise constraints 
\begin{align}
 \min_{u \in L^q(\Omega)}\left\lbrack F(Ku)+\TV(u,\Omega) \right\rbrack \quad \text{s.t.} \quad 0 \leq u \leq M_{\infty}
\end{align}
does not change its minimizers. Given the current iterate $u_k$, we can now apply a generalized conditional gradient method to this surrogate which leads to
\begin{equation}
    u^s_k= u_k+ s(v_k-u_k) \quad \text{where} \quad  v_k \in \argmin_{0\leq u \leq M_\infty} \bigg( -\int_{\Omega} p_k u \dd x + \TV(u, \Omega) \bigg)
\end{equation}
and $s \in [0,1]$ is a suitable stepsize. By standard arguments using the coarea and layer-cake formulas (see for example \cite[Thm.~2.2]{BurDonHin12} or \cite[Prop.~3]{ChaDuvPeyPoo17} in a slightly different situation), we now can choose
\begin{equation}
    v_k= M_\infty \1_{\bar{E}_k} \quad \text{where} \quad  \bar{E}_k \in  \argmin_{E \subset \Omega} \left\lbrack -\int_{E} p_k \dd x+ \Per(E,\Omega) \right \rbrack 
\end{equation}
motivating the set insertion step in Algorithm \ref{alg:abstractonecut}. Finally, by construction, we have $u^s_k=\mathcal{U}_{\A_{k,+}}(\tilde\lambda^{s})$ for some suitably chosen $\tilde{\lambda}^s$ with nonnegative entries. Hence, Algorithm \ref{alg:abstractonecut} achieves at least as much descent as the GCG step with properly chosen stepsize. While we emphasize that $u_k$ does not need to be admissible for the constrained problem and $M_\infty$ is not used in the method itself, we will see below that these descent guarantees allow to conclude that Algorithm \ref{alg:abstractonecut} inherits the characteristic global sublinear converge rate of GCG methods.   }  

\tcr{For completeness, we mention that the algorithm of \cite{CriIglWal23} relies on a similar reformulation but employs the trivial constraint $\TV(u,\Omega) \leq M_{\TV}$ arising from bounds on the optimal cost which leads to GCG problems of the form
\begin{equation}
   v_k \in \argmin_{u} \bigg( -\int_{\Omega} p_k u \dd x + \TV(u, \Omega) \bigg) \quad \text{s.t.} \quad \TV(u, \Omega) \leq M_{\TV}.
\end{equation}
By the fundamental theorem of linear programming as well as a characterization of the extremal points of the TV-seminorm ball, we can select a solution of the form $v_k= M_{\TV} \1_{\bar{E}_k}$ where $\bar{E}_k$ is now obtained from \eqref{eq:insertscaledintro}.}

\subsection{Sublinear convergence} \label{subsec:sublinear}
In this section, we prove the sublinear convergence of Algorithm \ref{alg:abstractonecut}.
We require some preparatory results:
\begin{lemma} \label{lem:optifinite}
 There holds
 \begin{gather} \label{eq:optfinitedim}
   \int_{E^j_k} p_k \dd x= \Per(E^j_k,\Omega) \quad \text{for all} \quad E^j_k \in \A_k, \quad \text{as well as}\\
   0 \leq r_J(u_k) \leq r_{\A_k}(\tcr{\lambda_k}) \leq -M_{\infty}\bigg( -\int_{\bar{E}_k} p_k \dd x + \Per(\bar{E}_k, \Omega) \bigg), ~~u_k\tcr{=\mathcal U_{\A_k}(\lambda_k) }\in E_J(0).\notag
 \end{gather}
\end{lemma}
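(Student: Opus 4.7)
The plan is to handle the three distinct claims in the lemma in the order of their logical dependence: the first–order identity \eqref{eq:optfinitedim}, then the sublevel set membership $u_k \in E_J(0)$ together with the elementary inequalities $0\leq r_J(u_k)\leq r_{\A_k}(u_k)$, and finally the main estimate bounding $r_{\A_k}(u_k)$ by the one-cut gap.

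First I would derive \eqref{eq:optfinitedim} as an interior first-order condition for the coefficient update in \eqref{eq:formulauk}. After pruning, every component of $\lambda_k$ is strictly positive, so $\lambda_k$ is an interior minimizer of the smooth-plus-linear convex function $\lambda \mapsto F(K\mathcal{U}_{\A_k}(\lambda)) + \sum_j \lambda^j \Per(E^j_k,\Omega)$ on $(0,\infty)^{\#\A_k}$. Setting the $j$-th partial derivative to zero yields $(\nabla F(Ku_k), K\mathbf{1}_{E^j_k})_Y + \Per(E^j_k,\Omega)=0$, which by the adjoint relation and the definition $p_k = -K^*\nabla F(Ku_k)$ is exactly \eqref{eq:optfinitedim}. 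For the sublevel set claim, I would test the same minimization with $\lambda=0$, giving $F(Ku_k)+\sum_j \lambda^j_k \Per(E^j_k,\Omega) \leq F(0)=J(0)$, and then invoke sublinearity of $\TV$ on the conic combination $u_k = \sum_j \lambda^j_k \mathbf{1}_{E^j_k}$ to deduce $\TV(u_k,\Omega)\leq \sum_j \lambda^j_k \Per(E^j_k,\Omega)$ and hence $J(u_k)\leq J(0)$. The same sublinearity inequality yields $r_J(u_k)\leq r_{\A_k}(u_k)$, while $r_J(u_k)\geq 0$ is immediate from optimality of $\min\eqref{def:BVprob}$.

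The main step is the final bound. Let $\bar u$ be any minimizer of \eqref{def:BVprob}. Convexity of $F$ and the definition of $p_k$ give $F(Ku_k)-F(K\bar u) \leq \int_\Omega p_k(\bar u - u_k)\dd x$, and \eqref{eq:optfinitedim} rewrites $\sum_j \lambda^j_k \Per(E^j_k,\Omega) = \int_\Omega p_k u_k \dd x$. Substituting both in the definition of $r_{\A_k}(u_k)$ cancels the $u_k$ contributions and leaves
\begin{equation*}
 r_{\A_k}(u_k)\leq \int_\Omega p_k \bar u \dd x - \TV(\bar u,\Omega).
\end{equation*}
Here the uniform bound $\|\bar u\|_\infty \leq M_\infty$ from Lemma \ref{lem:uniformlinfty} enters via a coarea/layer-cake decomposition: since $\bar u \geq 0$, writing $E^s = \{\bar u > s\}$ we obtain $\int_\Omega p_k \bar u \dd x = \int_0^\infty \int_{E^s}p_k\dd x\dd s$ and $\TV(\bar u,\Omega) = \int_0^\infty \Per(E^s,\Omega)\dd s$, with both integrands vanishing for $s>M_\infty$. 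By minimality of $\bar E_k$ in \eqref{def:prescribedcurvature}, the bracketed integrand $\int_{E^s}p_k\dd x - \Per(E^s,\Omega)$ is bounded pointwise in $s$ by $\int_{\bar E_k} p_k \dd x -\Per(\bar E_k,\Omega)$; integrating this constant over an interval of length $M_\infty$ produces exactly the claimed bound $-M_\infty(-\int_{\bar E_k}p_k\dd x + \Per(\bar E_k,\Omega))$.

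The only mildly delicate point is this last coarea bookkeeping, which has to combine (i) the $L^\infty$ truncation at $M_\infty$ and (ii) the set-wise optimality of $\bar E_k$ applied at every level set simultaneously. Once the signs are tracked (the algorithm non-termination condition $\int_{\bar E_k} p_k \dd x > \Per(\bar E_k,\Omega)$ makes the right-hand side strictly positive), everything else is structural and requires no additional regularity beyond what is already provided.
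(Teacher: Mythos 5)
Your proof is correct and follows essentially the same route as the paper's: interior first-order conditions for \eqref{eq:optfinitedim}, testing the coefficient problem at zero together with sublinearity of $\TV$ for $u_k\in E_J(0)$, and convexity of $F$ plus \eqref{eq:optfinitedim} to reduce the main bound to $\int_\Omega p_k\bar u \dd x - \TV(\bar u,\Omega)$. The only cosmetic difference lies in the closing step, where you invoke the coarea/layer-cake identity explicitly and integrate the set-wise optimality of $\bar E_k$ over $[0,M_\infty]$, whereas the paper normalizes by $\|\bar u\|_\infty$ and passes through the $[0,1]$-valued relaxation of the prescribed-curvature minimum; these amount to the same argument with different bookkeeping.
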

\begin{proof}
The first statement follows immediately from deriving first-order necessary optimality conditions for the finite dimensional minimization problem in \eqref{eq:formulauk} \tcr{ noting that $\lambda^j_k >0$, see \eqref{eq:formulauk}}. Concerning the second, let $\bar{u}$ denote any minimizer of \eqref{def:BVprob} for which we recall \tcr{that }$\|\bar{u}\|_\infty \leq M_\infty$ \tcr{holds}. Now use \eqref{eq:optfinitedim} to estimate
\begin{align}
 0 \leq r_J(u_k) \leq r_{\A_k}(\tcr{\lambda_k}) &\leq \int_\Omega p_k (\bar{u} - u_k) \dd x - \TV(\bar{u},\Omega) + \sum_{E^j_k \in \A_k} \lambda^j_k \Per(E^j_k,\Omega)\\ & = \int_\Omega p_k \bar{u} \dd x - \TV(\bar{u},\Omega) \\
& = - \|\bar{u}\|_\infty \bigg( - \int_\Omega p_k \big( \bar{u}/\|\bar{u}\|_\infty \big) \dd x + \TV\big( \bar{u}/\|\bar{u}\|_\infty,\Omega \big) \bigg)\\
& \leq -M_\infty \min_{0 \leq u \leq 1} \bigg( -\int_\Omega p_k u \dd x + \TV(u,\Omega) \bigg)\\ 
& \leq -M_\infty \bigg( -\int_{\bar{E}_k} p_k \dd x + \Per(\bar{E}_k,\Omega) \bigg),\end{align}
where the \tcr{second} inequality follows from 
\begin{equation}
  \TV(u_k,\Omega) \leq \sum_{E^j_k \tcr{\in \A_k}} \lambda^j_k \TV(\1_{E^j_k},\Omega)=\sum_{E^j_k \tcr{\in \A_k}} \lambda^j_k \Per(E^j_k,\Omega), 
\end{equation}
the \tcr{third} is due to \tcr{the} convexity of $F$ \tcr{and the definition of $p_k$, the fourth one follows from Lemma \ref{lem:uniformlinfty}} and the final one is a consequence \tcr{of} the definition of $\bar{E}_k$. Finally, by construction, \tcr{it} holds
\begin{equation}
  r_J(u_k) \leq r_{\A_k}(\tcr{\lambda_k}) \leq r_{\A_k}(0)=r_J(0) 
\end{equation}
and thus $J(u_k) \leq J(0)$, i.e. $u_k \in E_J(0)$.
\end{proof}
\begin{lemma} \label{lem:compprodnegative}
Assume that $\bar{E}_k$ is decomposable as $\bar{E}_k = \bigcup^{n_k}_{\tcr{m}=1} \tcr{\bar{E}^m_k}$, and satisfies
\begin{equation}
      \bar{E}_k \in \argmin_{E \subset \Omega} \left \lbrack -\int_E p_k \dd x+ \Per(E,\Omega)\right\rbrack. 
    \end{equation}
    Then \tcr{it} holds 
    \begin{equation} \label{eq:compneg}
        -\int_{ \tcr{\bar{E}^m_k}} p_k \dd x+ \Per(\tcr{\bar{E}^m_k},\Omega) \leq 0 \quad \text{for all } \tcr{m}=1, \dots,n_k.
    \end{equation}
\end{lemma}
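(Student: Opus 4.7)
The plan is to argue by contradiction via an exchange step: if some component were ``profitable'' in the prescribed curvature functional, removing it from $\bar{E}_k$ would strictly decrease the cost, contradicting minimality.

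First, I would record the key consequence of the decomposition $\bar{E}_k = \bigcup_{j=1}^{n_k} \bar{E}^j_k$ into indecomposable components: namely, that for any index subset $S \subset \{1,\dots,n_k\}$ one has
\[\Per\!\left(\bigcup_{j \in S} \bar{E}^j_k,\Omega\right) = \sum_{j \in S} \Per(\bar{E}^j_k,\Omega).\]
This follows by iterating the binary splitting in the definition of decomposability stated in the notation section, together with the decomposition property of \cite[Thm.~1]{AmbCasMasMor01} which identifies the $\bar{E}^j_k$ with the (essentially disjoint) connected components of $\bar{E}_k$ in the measure-theoretic sense. Together with the obvious additivity $\int_{\bar{E}_k} p_k \dd x = \sum_j \int_{\bar{E}^j_k} p_k \dd x$, this yields that the prescribed curvature functional is itself additive over the components.

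Next, I would suppose for contradiction that there exists $j_0 \in \{1,\dots,n_k\}$ such that
\[-\int_{\bar{E}^{j_0}_k} p_k \dd x + \Per(\bar{E}^{j_0}_k,\Omega) > 0,\]
and consider the competitor $\tilde E_k \coloneqq \bar{E}_k \setminus \bar{E}^{j_0}_k = \bigcup_{j \neq j_0} \bar{E}^j_k$. By the additivity just established, applied both to $S = \{1,\dots,n_k\}$ and to $S = \{1,\dots,n_k\}\setminus\{j_0\}$,
\[\left[-\int_{\tilde E_k} p_k \dd x + \Per(\tilde E_k,\Omega)\right] = \left[-\int_{\bar{E}_k} p_k \dd x + \Per(\bar{E}_k,\Omega)\right] - \left[-\int_{\bar{E}^{j_0}_k} p_k \dd x + \Per(\bar{E}^{j_0}_k,\Omega)\right],\]
so that the competitor $\tilde E_k$ strictly undercuts $\bar{E}_k$ in the prescribed curvature cost, contradicting the minimality hypothesis on $\bar{E}_k$. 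Hence \eqref{eq:compneg} must hold for every $j$.

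The only subtle point is the perimeter additivity across arbitrary sub-unions of the decomposition, which goes slightly beyond the pairwise definition of decomposability and is the reason one works with the indecomposable decomposition of Ambrosio--Caselles--Masnou--Morel rather than an arbitrary partition; once this is granted the exchange argument is immediate. Everything else is a one-line optimality comparison.
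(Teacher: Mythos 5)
Your proof is correct and follows essentially the same route as the paper: argue by contradiction, use additivity of the prescribed curvature functional over the indecomposable components, and observe that removing a profitable component strictly lowers the cost. The paper is more terse about the perimeter additivity for sub-unions, but this is precisely the point you correctly flag as following from the Ambrosio--Caselles--Masnou--Morel decomposition.
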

\begin{proof}\tcr{This follows exactly as the beginning of the proof of Lemma \ref{lem:upperlowerbounds}.} 
\end{proof}
As a consequence, we can derive an upper bound on the per-iteration descent of Algorithm \ref{alg:abstractonecut} which can then be used to conclude the sublinear convergence of the method.
\begin{proposition}\label{prop:descent}
For all~$k \geq 1$, the iterates $u_k$ satisfy
\begin{equation} \label{eq:descentineq}
  r_{\A_{k+1}}(\tcr{\lambda_{k+1}})-r_{\A_k}(\tcr{\lambda_k}) \leq - \frac{r_{\A_k}(\tcr{\lambda_k})}{2} \min \left\{1, \frac{r_{\A_k}(\tcr{\lambda_k})}{L_{\nabla F}\|K\|^2 \big(M_q+ M_\infty |\Omega|^{\frac{1}{q}} \big)^2}\right\} \leq 0.
\end{equation}  
\end{proposition}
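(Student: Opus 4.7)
My approach is the standard generalized conditional gradient descent estimate, tailored to this fully‑corrective, conic one‑cut setting. For $t\in[0,1]$ I would introduce the test iterate
\[
w_k(t) := (1-t)\,u_k + t\,M_\infty\,\1_{\bar{E}_k},
\]
and verify that, thanks to $\1_{\bar{E}_k}=\sum_{j=1}^{n_k}\1_{\bar{E}^j_k}$, it is a conic combination of characteristic functions of sets in $\A_{k,+}$: the weights are $(1-t)\lambda^j_k\geq 0$ for the old sets $E^j_k\in\A_k$ and $tM_\infty\geq 0$ for the newly added components $\bar{E}^j_k$ (possible coincidences between an $\bar{E}^j_k$ and some $E^i_k$ are absorbed by adding the corresponding weights, which does not alter the perimeter term). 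Since $u_{k+1}$ is the fully corrective minimizer and removing sets with zero coefficient leaves the cost invariant, one gets the domination
\[
r_{\A_{k+1}}(u_{k+1}) = r_{\A_{k,+}}(u_{k+1})\leq r_{\A_{k,+}}(w_k(t))\quad \text{for every }t\in[0,1].
\]

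The next step combines a Lipschitz‑gradient expansion of $F$ with the optimality identity from Lemma \ref{lem:optifinite}. Using Assumption \eqref{item:assonF} and $p_k=-K^*\nabla F(Ku_k)$,
\[
F(Kw_k(t))\leq F(Ku_k) - t\int_\Omega p_k\bigl(M_\infty\1_{\bar{E}_k}-u_k\bigr)\dd x + \tfrac{L_{\nabla F}}{2}t^2\bigl\|K(M_\infty\1_{\bar{E}_k}-u_k)\bigr\|_Y^2.
\]
Expanding the perimeter contribution in $r_{\A_{k,+}}(w_k(t))$ as $(1-t)\sum_j\lambda^j_k\Per(E^j_k,\Omega) + tM_\infty\Per(\bar{E}_k,\Omega)$ (here indecomposability gives $\sum_j\Per(\bar{E}^j_k,\Omega)=\Per(\bar{E}_k,\Omega)$) and substituting $\int_\Omega p_k u_k\dd x = \sum_j\lambda^j_k\Per(E^j_k,\Omega)$ from Lemma \ref{lem:optifinite}, the two linear‑in‑$t$ terms telescope into $tM_\infty\bigl(-\int_{\bar{E}_k}p_k\dd x+\Per(\bar{E}_k,\Omega)\bigr)$. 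Applying the gap bound of the same lemma, $M_\infty\bigl(-\int_{\bar{E}_k}p_k+\Per(\bar{E}_k,\Omega)\bigr)\leq -r_{\A_k}(u_k)$, yields
\[
r_{\A_{k,+}}(w_k(t))\leq (1-t)\,r_{\A_k}(u_k) + \tfrac{L_{\nabla F}}{2}\,t^2\,\|K(M_\infty\1_{\bar{E}_k}-u_k)\|_Y^2.
\]

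Finally, I would bound the quadratic term and optimize in $t$. Since Lemma \ref{lem:optifinite} places $u_k\in E_J(0)$, Assumption \eqref{item:assonsublevel} supplies a uniform bound $\|u_k\|_{L^q(\Omega)}\leq M_q$; combined with $\|M_\infty\1_{\bar{E}_k}\|_{L^q(\Omega)}\leq M_\infty|\Omega|^{1/q}$ and the continuity of $K$, the triangle inequality gives $\|K(M_\infty\1_{\bar{E}_k}-u_k)\|_Y\leq \|K\|(M_q+M_\infty|\Omega|^{1/q})$. Writing $C:=L_{\nabla F}\|K\|^2(M_q+M_\infty|\Omega|^{1/q})^2$ and $r:=r_{\A_k}(u_k)$, the resulting scalar inequality $r_{\A_{k+1}}(u_{k+1})-r\leq -tr+\tfrac{1}{2}t^2 C$ is minimized on $[0,1]$ by elementary quadratic calculus: the unconstrained minimizer $t^\star = r/C$ gives value $-r^2/(2C)$ when $t^\star\leq 1$, while truncating at $t=1$ gives at most $-r/2$ in the complementary regime $r\geq C$; the two cases collapse to the stated $-\tfrac{r}{2}\min\{1,r/C\}$. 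The main obstacle is really the bookkeeping in the first step: one must be careful that $w_k(t)$ is genuinely a nonnegative conic combination over $\A_{k,+}$, that possible overlaps between the indecomposable components $\bar{E}^j_k$ and sets already in $\A_k$ do not spoil the perimeter accounting, and that the removal of zero‑weight sets from $\A_{k,+}$ to obtain $\A_{k+1}$ preserves the residual.
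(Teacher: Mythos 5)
Your proposal is correct and follows essentially the same route as the paper: same test iterate $u^s_k = u_k + s(M_\infty\1_{\bar{E}_k}-u_k)$ represented as a conic combination over $\A_{k,+}$, same use of the optimality identity and gap bound from Lemma \ref{lem:optifinite}, same Taylor estimate with $L_{\nabla F}$ and $\|K\|$, same $L^q$ bound via $u_k \in E_J(0)$, and same one-dimensional minimization over the step size.
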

\begin{proof}
For $s \in [0,1]$, define $u^s_k= u_k+ s(M_\infty \1_{\bar{E}_k}-u_k)$ which we can rewrite as $u^s_k= \mathcal{U}_{\A_{k,+}}(\tilde\lambda^{s})$ and
\begin{align}
  u^s_k= (1-s) \sum_{E^j_k \in \A_k} \lambda^j_k\1_{E^j_k} + sM_\infty \1_{\bar{E}_k}= (1-s) \sum_{E^j_k \in \A_k} \lambda^j_k\1_{E^j_k} + sM_\infty \sum^{n_k}_{\tcr{m}=1} \1_{\tcr{\bar{E}^{m}_k}}
\end{align}
by choosing $\tilde\lambda^{s}$ suitably and noting that $\bar{E}^m_k \cap \bar{E}^n_k=\emptyset$, $m \neq n$. As a consequence,
\begin{equation}
  r_{\A_{k+1}}(\tcr{\lambda_{k+1}})-r_{\A_k}(\tcr{\lambda_k})= r_{\A_{k,+}}(\tcr{\lambda_{k,+}})-r_{\A_k}(\tcr{\lambda_k}) \leq r_{\A_{k,+}}(\tcr{\tilde\lambda^{s}})-r_{\A_k}(\tcr{\lambda_k})
\end{equation}
as well as
\begin{equation}
  r_{\A_{k,+}}(\tcr{\tilde\lambda^{s}})-r_{\A_k}(\tcr{\lambda_k})= F(K u^s_k)-F(Ku_k)+ s \Bigg(M_\infty \Per(\Bar{E}_k, \Omega)-\sum_{\tcr{E^j_k \in \A_k}} \lambda^j_k \Per(E^j_k, \Omega) \Bigg)
\end{equation}
where we use $\sum^{n_k}_{\tcr{m}=1} \Per(\tcr{\bar{E}^m_k} ,\Omega)=\Per(\bar{E}_k, \Omega)$. By Taylor expansion of the difference, we further obtain
\begin{equation}
  F(Ku^s_k)- F(Ku_k) \leq s\int_\Omega p_k(u_k- M_\infty\1_{\bar{E}_k}) \dd x + \frac{s^2L_{\nabla F} \|K\|^2}{2} \|u_k- M_\infty \1_{\bar{E}_k}\|^2_{L^q}
\end{equation}
and thus, using Lemma \ref{lem:optifinite},
\begin{align} \label{eq:helpest}
  r_{\A_{k+1}}(\tcr{\lambda_{k+1}})-r_{\A_k}(\tcr{\lambda_k}) &\leq s M_\infty \left(-\int_{\bar{E}_k} p_k \dd x+ \Per(\tcr{\bar{E}^\gamma_k},\Omega) \right)\\&\qquad+ \frac{s^2 L_{\nabla F} \|K\|^2}{2} \|u_k- M_\infty \1_{\bar{E}_k}\|^2_{L^q} \notag\\
  &\leq -s r_{\A_k}(\tcr{\lambda_k})+ \frac{s^2 L_{\nabla F} \|K\|^2}{2} \|u_k- M_\infty \1_{\bar{E}_k}\|^2_{L^q}
\end{align}
Recalling that $u_k \in E_J(0)$, i.e. $\|u_k\|_{L^q} \leq M_q$, we now estimate
\begin{equation}
  \|u_k- M_\infty \1_{\bar{E}_k}\|^2_{L^q} \leq \left(M_q+ M_\infty |\Omega|^{\frac{1}{q}} \right)^2.
\end{equation}
Substituting this bound into \eqref{eq:helpest} and minimizing w.r.t $s \in [0,1]$, we find
\begin{align}
  \min_{s \in [0,1]} &\left\lbrack-s r_{\A_k}(\tcr{\lambda_k})+ \frac{s^2 L_{\nabla F} \|K\|^2}{2} \left(M_q+ M_\infty |\Omega|^{\frac{1}{q}} \right)^2 \right \rbrack \\ &\leq - \frac{r_{\A_k}(\tcr{\lambda_k})}{2} \min \left\{1, \,\frac{r_{\A_k}(\tcr{\lambda_k})}{L_{\nabla F}\|K\|^2 \big(M_q+ M_\infty |\Omega|^{\frac{1}{q}} \big)^2}\right\}.
\end{align}
\end{proof}
\begin{theorem} \label{thm:sublinear}
Let $u_k$, $k\in \N$, be generated by Algorithm \ref{alg:abstractonecut}. Then there holds
\begin{align}
    r_J(u_k) \leq \frac{r_{\A_1}(\tcr{\lambda_1})}{1+q(k-1)} \quad \text{where} \quad q=\frac{1}{2} \min \left\{1, \,\,\frac{r_{\A_1}(\tcr{\lambda_{1}})}{L_{\nabla F}\|K\|^2 \big(M_q+ M_\infty |\Omega|^{\frac{1}{q}} \big)^2}\right\}
\end{align}
as well as $y_k \rightarrow \bar{y}$ in $Y$ and $p_k \rightarrow \bar{p}$ in $L^d(\Omega)$.
\tcr{Further}, $u_k$ admits at least one strictly convergent subsequence and every strict accumulation point is a minimizer of \eqref{def:BVprob}.
\end{theorem}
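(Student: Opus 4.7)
The plan is to prove the three conclusions in order, starting from the descent estimate of Proposition \ref{prop:descent}. For the quantitative rate, set $a_k = r_{\A_k}(u_k)$ and $C = L_{\nabla F}\|K\|^2(M_q + M_\infty|\Omega|^{1/q})^2$, so that the descent inequality reads $a_{k+1} \leq a_k\bigl(1 - \tfrac{1}{2}\min\{1, a_k/C\}\bigr)$, and in particular $\{a_k\}$ is monotonically nonincreasing. With $q = \tfrac{1}{2}\min\{1, a_1/C\}$, a short case analysis (splitting on whether $a_1 \leq C$) shows that $\tfrac{1}{2}\min\{1, a_k/C\} \geq q\,a_k/a_1$ for all $k$, hence $a_{k+1} \leq a_k(1 - qa_k/a_1)$. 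Since $qa_k/a_1 \leq 1/2$, passing to reciprocals and using the elementary bound $1/(1-x) \geq 1+x$ yields $1/a_{k+1} \geq 1/a_k + q/a_1$. A straightforward induction then gives $a_k \leq a_1/(1 + q(k-1))$, and since $r_J(u_k) \leq a_k$ by Lemma \ref{lem:optifinite}, the claimed rate follows.

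For the convergence of $y_k$ and $p_k$, the rate implies $J(u_k) \to \min \eqref{def:BVprob}$. By Lemma \ref{lem:optifinite} each $u_k$ lies in $E_J(0)$, which is bounded in $L^q(\Omega)$ by Assumption \ref{item:assonsublevel}. Together with $\TV(u_k,\Omega) \leq J(u_k)$ this bounds $\{u_k\}$ in $\BV(\Omega)$, so any subsequence admits a further subsequence $u_{k_n}$ converging weakly in $L^q$ and strongly in $L^1$ to some $u^* \geq 0$. Assumption \ref{item:assonK} then gives $Ku_{k_n} \to Ku^*$ in $Y$, and combining lower semicontinuity of $\TV$ with continuity of $F$ yields $J(u^*) \leq \liminf J(u_{k_n}) = \min \eqref{def:BVprob}$, so $u^*$ is a minimizer and Theorem \ref{thm:existence} gives $Ku^* = \bar y$. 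A standard subsequence-of-subsequence argument upgrades this to $Ku_k \to \bar y$ in $Y$ for the full sequence. Lipschitz continuity of $\nabla F$ together with boundedness of $K^* \colon Y \to L^d(\Omega)$ (using $q' = d$) then delivers $p_k \to \bar p$ in $L^d(\Omega)$.

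For the strict accumulation statement, the subsequence $u_{k_n}$ produced above is in fact already strictly convergent in $\BV(\Omega)$: continuity of $F$ together with $Ku_{k_n} \to Ku^*$ gives $F(Ku_{k_n}) \to F(Ku^*)$, and combining this with $J(u_{k_n}) \to J(u^*)$ forces $\TV(u_{k_n},\Omega) \to \TV(u^*,\Omega)$, which paired with $L^1$ convergence is exactly strict convergence. Conversely, if $u^*$ is any strict $\BV$ accumulation point along some subsequence $u_{k_n}$, then by strict convergence $u_{k_n} \to u^*$ in $L^1$ and $\TV(u_{k_n},\Omega) \to \TV(u^*,\Omega)$; the uniform $L^q$ bound upgrades the former to weak $L^q$ convergence, so Assumption \ref{item:assonK} gives $Ku_{k_n} \to Ku^*$, which combined with the already-established $Ku_k \to \bar y$ yields $Ku^* = \bar y$ and $F(Ku_{k_n}) \to F(Ku^*)$. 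Hence $J(u^*) = \lim J(u_{k_n}) = \min \eqref{def:BVprob}$, identifying $u^*$ as a minimizer. The only step requiring genuine care is the induction for the rate; the remaining claims are routine consequences of the descent estimate, the weak-to-strong continuity of $K$, and standard $\BV$ compactness.
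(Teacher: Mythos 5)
Your proposal is correct and takes essentially the same route as the paper: Proposition \ref{prop:descent} is normalized into the quadratic-decrease recursion $a_{k+1} \leq a_k - q a_k^2 / a_1$ and iterated to get the sublinear rate, after which the convergence of $y_k$, $p_k$ and the strict-accumulation statement follow from uniqueness of $\bar{y}$, weak-to-strong continuity of $K$, and $\BV$ compactness plus lower semicontinuity. The only difference is one of exposition, not method: where the paper delegates the recursion to Dunn's Lemma 3.1 and the accumulation argument to \cite{CriIglWal23}, you carry out the reciprocal-induction and the strict-convergence equivalences explicitly; your case analysis establishing $\tfrac12\min\{1,a_k/C\}\geq q\,a_k/a_1$ is exactly the reduction the paper performs when dividing \eqref{eq:descentineq} by $r_{\A_1}(u_1)$.
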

\begin{proof}
Dividing \eqref{eq:descentineq} by $r_{\A_1}(\tcr{\lambda_1})$ and noting that $r_{\A_k}(\tcr{\lambda_k}) \leq r_J(0)$, $k \in \N$, we obtain
\begin{align}
  \frac{r_{\A_{k+1}}(\tcr{\lambda_{k+1}})}{r_{\A_1}(\tcr{\lambda_1})} \leq \frac{r_{\A_k}(\tcr{\lambda_k})}{{r_{\A_1}(\tcr{\lambda_1})}} - \frac{1}{2} \min \left\{1,\,\, \frac{r_{\A_1}(\tcr{\lambda_1})}{L_{\nabla F}\|K\|^2 \big(M_q+ M_\infty |\Omega|^{\frac{1}{q}} \big)^2}\right\} \left( \frac{r_{\A_k}(\tcr{\lambda_k})}{r_{\A_1}(\tcr{\lambda_1})} \right)^2.
\end{align}
The claimed convergence rate then follows by \cite[Lemma 3.1]{Dunn80} as well as $r_J(u_k) \leq r_{\A_k}(\tcr{\lambda_k})$.
The statement on strictly convergent subsequences of $u_k$ as well as the optimality of strict accumulation points follows by the same arguments as in \cite{CriIglWal23}. Finally, the convergence of the $p_k$ and $y_k$ follows by uniqueness of the optimal observation $\bar{y}$ and the weak-to-strong continuity of $K$.
\end{proof}
\begin{remark} \label{rem:sublinwithoutsplit}
It is worth noting that the splitting of $\bar{E}_k$ into indecomposable components at each step as well as a full resolution of the finite-dimensional coefficient problem are not necessary to achieve a sublinear rate of convergence as in Theorem \ref{thm:sublinear}. More precisely, a comparable result can be proven, mutatis mutandis, for sequences $u_k$ which merely satisfy
\begin{equation}
 r_{\A_{k+1}}(\tcr{\lambda_{k+1}}) \leq \min_{s \in [0,1]} r_{\A_{k,+}}(\tcr{\tilde\lambda^{s}}) ,
\end{equation}
\tcr{with $\mathcal{U}_{\A_{k,+}}(\tilde\lambda^{s})=u_k+s(M_\infty \1_{\bar{E}_k}-u_k)$.}
\end{remark}

\subsection{Linear convergence under structural assumptions} \label{subsec:linear}
In this section, we finally prove that Algorithm \ref{alg:abstractonecut} eventually converges linearly provided that the optimal dual variable $\bar{p}$ for Problem \eqref{def:BVprob} satisfies additional structural assumptions in the spirit of \cite{DecDuvPet24}. In order to profit from the tools developed in the latter, we restrict ourselves to the particular case of two-dimensional, i.e. $d=2$, and convex domains $\Omega$. \tcr{This convexity, in combination with Assumption \eqref{ass:maxminawayfrombdy}, ensures that the insertion problem behaves essentially as in the case where $\Omega = \R^d$, see Appendix \ref{sec:appendix}.}

We start by assuming that:
\begin{enumerate}[label=\textbf{B\arabic*}]
\item \label{ass:maxminawayfrombdy} The unique maximal solution $\bar{E}= \bigcup^N_{\tcr{\gamma}=1} \tcr{\bar{E}^\gamma} $ of Problem \eqref{def:prescribedcurvature} with $p=\bar{p}$ and indecomposable components $\tcr{\bar{E}^\gamma}$ satisfies $\dist(\bar{E}, \partial \Omega) >0$. Moreover, there holds $\dim \Span \left\{K\1_{\tcr{\bar{E}^\gamma}}\right\}^N_{\tcr{\gamma}=1}=N$ and 
\begin{equation}\label{eq:MCunique}
    \argmin \eqref{def:prescribedcurvature}= \{\emptyset\} \cup \Bigg\{E\;\Bigg\vert\; \exists\, \mathcal{I} \subset \{1,\dots,N\},~E=\bigcup_{\tcr{\gamma} \in \mathcal{I}} \tcr{\bar{E}^\gamma}  \,\Bigg\}.
\end{equation}
\end{enumerate}
\tcr{We note that we consider the representative of $\bar{E}$ satisfying \eqref{eq:bdy} when talking about the quantity $\dist(\bar{E}, \partial \Omega)$.}
Arguing along the lines of \cite[Proposition 3.5]{BreCarFanWal24}, this assumption implies that \eqref{def:BVprob} admits a unique solution $\bar u$ which is of the form $\bar{u}=\sum^N_{\tcr{\gamma}=1} \bar{\lambda}^\gamma \1_{\tcr{\bar{E}^\gamma}}$ for some unique weights $\bar{\lambda}^\gamma \geq 0$. As a consequence, see Theorem \ref{thm:sublinear}, we have $u_k \rightarrow \bar{u}$ in $L^q(\Omega)$. The following strict complementarity assumption is made:
\begin{enumerate}[label=\textbf{B\arabic*}]
\setcounter{enumi}{1}
\item \label{ass:strictcomplementarity} \tcr{There exist $\bar{\lambda}^\gamma >0$ for $\gamma=1,\ldots,N$ such that the unique solution $\bar{u}$ of Problem \eqref{def:BVprob} is of the form $\bar{u}=\sum^N_{\gamma=1} \bar{\lambda}^\gamma \1_{\tcr{\bar{E}^\gamma}}$.}
\end{enumerate}

We further require stronger regularity assumptions on the fidelity term $F$ as well as on the forward operator $K$:
\begin{enumerate}[label=\textbf{B\arabic*}]
\setcounter{enumi}{2}
\item \label{ass:strongconv} $F$ is strongly convex on a neighborhood $\mathcal{N}(\bar y)$ of $\bar{y}$, i.e. there is $\theta>0$ with
\begin{equation}
    (\nabla F(y_1)-\nabla F(y_2),y_1-y_2)_Y \geq \theta \|y_1-y_2\|^2_Y \quad \text{for all} \quad y_1,y_2 \in \mathcal{N}(\bar{y}).
\end{equation}
\item \label{ass:smoothduals} The adjoint operator $K^\ast$ maps continuously from $Y$ to $\cC^1(\tcr{\cl \Omega})$.
\end{enumerate}
\tcr{The assumption $\dist(\bar{E}, \partial \Omega)>0$ together with Assumption \eqref{ass:smoothduals} allows to apply Proposition \ref{prop:globalize} which guarantees that $\bar{E}$ is the maximal minimizer of a prescribed mean curvature problem over the whole space $\R^2$. Since as seen in Lemma \ref{lem:upperlowerbounds} indecomposable components of minimizers of \eqref{def:prescribedcurvature} remain minimizers, using the regularity results recalled in Proposition \ref{prop:existenceMC} we get that the components $\bar{E}^\gamma$ have $\cC^2$ boundaries. Assumption \eqref{ass:smoothduals} also implies that} $p_k \rightarrow \bar{p}$ in $\cC^1(\tcr{\cl \Omega})$. The main idea in the following is to use this stronger convergence together with \eqref{ass:maxminawayfrombdy} to interpret the new candidate set $\bar{E}_k$ from Algorithm \ref{alg:abstractonecut} as \tcr{a} smooth deformation of a subset of $\bar{E}$, see Theorem \ref{thm:deformationinsert} below. \tcr{We emphasize that $\dist(\bar{E}, \partial \Omega)>0$ is crucial to be able to perform these arguments. On the other hand, solutions of \eqref{def:prescribedcurvature} on the whole space with $p \in L^d(\R^d)$ are known (see \cite[Lemma 4]{ChaDuvPeyPoo17}) to be bounded, so in cases where the solutions do not strongly depend on the boundary conditions, one may choose $\Omega$ large enough such that this condition holds.}

In order to quantify these observations, we rely on the following stability propert\tcr{y}:
\begin{enumerate}[label=\textbf{B\arabic*}]
\setcounter{enumi}{4}
\item \label{ass:quadraticgrowth} We have the following quadratic growth condition: \tcr{There is $\eps_0 >0$ with}
\begin{equation}\label{eq:quadgrowthmax} -\int_{\varphi_{\#}(\bar{E})} \bar{p} \dd x + \Per\big(\varphi_{\#}(\bar{E}),\Omega\big) \geq -\int_{\bar{E}} \bar{p} \dd x + \Per(\bar{E},\Omega) + \kappa \|\varphi\|^2_{H^1(\partial \bar{E})} \end{equation}
for all $\varphi \in \tcr{\cC^2}(\partial \bar{E})$ with $\|\varphi\|_{\tcr{\cC^2}(\partial \bar{E})} \leq \eps_0$.
\end{enumerate}
The quadratic growth assumption \tcr{in \eqref{ass:quadraticgrowth}} might be quite opaque, since it involves the $H^1$ norm of deformations. Let us point out that easier to check conditions with Hessians implying such quadratic growth are known, as formulated in \cite{DecDuvPet24} which in turn makes use of the stability results for shape optimization of \cite[Thm.~1.1]{DamLam19}. In particular, \cite[Prop.~D.2]{DecDuvPet24} provides a sufficient condition in terms of the mean curvature $H_{\bar{E}}$ of the boundary of $\bar{E}$ and its \tcr{outer} normal vector $n_{\bar{E}}$:
\[\sup_{x \in \partial \bar{E}} \left\lbrack H_{\bar{E}}(x)^{\tcr{2}} \tcr{+} \frac{\partial \bar{p}}{\partial n_{\bar{E}}}\tcr{(x)}\right\rbrack < 0.\]
Moreover, if $\bar{p}$ satisfies this condition and additionally $\bar{p}(x)=H_{\bar{E}}(x)$ at all $x \in \partial \bar{E}$, then \eqref{eq:MCunique} is also satisfied. Finally, we note that the analogous condition formulated on each $\tcr{\bar{E}^\gamma}$ automatically follows from \eqref{ass:quadraticgrowth}.

We further emphasize that while some of these assumptions, e.g. \eqref{ass:strongconv} and \eqref{ass:smoothduals}, can be checked a priori in simple settings, the more technical ones can only be verified a posteriori once $\bar{p}$ and $\bar{u}$ are computed, \eqref{ass:strictcomplementarity}, and, in the case of the condition \eqref{ass:maxminawayfrombdy} on the maximal minimizer of \eqref{def:prescribedcurvature} with $\bar{p}$, would require additional approximations \cite{ButCarCom07} and involved numerical computations. Moreover, the practical realization of Algorithm \ref{alg:abstractonecut} often requires an additional discretization of the problem, adding another level of complexity to the problem. For example, after approximating elements in $L^q(\Omega)$ by piecewise constant functions on a triangulation $\mathcal{T}$ of $\Omega$, \cite{CriIglWal23} proves finite-step convergence of a discretized algorithm owing to the fact that the set of triangulated sets $\mathcal{S}_{\mathcal{T}}(\Omega)$ in $\Omega$ is finite.

As a consequence, the presented result should be understood as a first step towards understanding the practical efficiency of accelerated conditional gradient-like methods for TV-regularization and leaves room for further work. The following remark summarizes some relaxations of the presented assumptions which, while interesting, would require additional technical work and are, consequently, disregarded at the moment to strike a balance between generality and readability.
\begin{remark}\label{rem:ndsc}
Assumptions \eqref{ass:strictcomplementarity} and \eqref{ass:quadraticgrowth} could be relaxed to a setting analogous to the one imposed by the non-degenerate source condition of \cite[Def.~1]{DecDuvPet24}. In that case, instead of \eqref{eq:MCunique} one would prescribe that all possible solutions of \eqref{def:prescribedcurvature} arise from a collection of simple sets corresponding to the decompositions of all level sets of $\bar{u}$, and the quadratic growth assumption \eqref{ass:quadraticgrowth} would have to be formulated around each of these sets. We stay in the more restricted setting for clarity and brevity, but our analysis would follow among similar lines, provided that the decomposition step of Algorithm \ref{alg:abstractonecut} would be replaced by finding all components of both $\bar{E}_k$ and $\Omega \setminus \bar{E}_k$, and adding all of them to the active set. Furthermore, given \eqref{ass:maxminawayfrombdy}, Assumption \eqref{ass:smoothduals} could be weakened to requiring interior regularity $K^* y \in \mathcal{C}^1(\Omega_o) $ for some subset $\Omega_o \subset \Omega$ with $\bar{E} \subset \Omega_o$. Finally, we point out that we see no clear obstacles to extending the result to higher dimensions \tcr{(up to at most $d=7$ for regularity results to apply, see Proposition \ref{prop:existenceMC} and the references therein)}, but we stay in $d=2$ to directly use the stability results for minimizers of \eqref{def:prescribedcurvature} in the form stated in \cite{DecDuvPet24}.
\end{remark}

Given \eqref{ass:maxminawayfrombdy}-\eqref{ass:quadraticgrowth}, we will prove that Algorithm \ref{alg:abstractonecut} converges linearly in the asymptotic regime, i.e. there is $\bar{k} \geq 1$ and $\zeta \in (0,1)$ such that
\begin{equation}
 r_{\A_{k+1}}(\tcr{\lambda_{k+1}}) \leq \zeta r_{\A_k}(\tcr{\lambda_k}), \quad  r_J(u_k) \leq C_{\text{lin}} \zeta^k \quad \text{for all} \quad k \geq \bar{k}.
\end{equation}
For this purpose, we want to proceed analogously to Theorem \ref{thm:sublinear} and estimate the per-iteration decrease of Algorithm \ref{alg:abstractonecut} via a surrogate $\widehat{u}^s_k$ for which the former is easy to quantify. Our considerations rest on the following result characterizing the set $\bar{E}_k$:

\begin{theorem} \label{thm:deformationinsert}For every $\epsilon >0$ there is $\eta >0$ such that if $\|p_k - \bar{p}\|_{C^1(\tcr{\cl \Omega})} < \eta$, then for every solution $\bar{E}_k$ of \eqref{def:prescribedcurvature} with $p=p_k$ there is an index set $\mathcal{I}_k \subset \{1, \dots,N\}$ and a deformation
\[ \tcr{\bar\varphi_k} \in \cC^2\big(B_{\mathcal{I}_k},\, \R^2\big) \quad\text{for}\ \,B_{\mathcal{I}_k}:=\bigcup_{\tcr{\gamma} \in \mathcal{I}_k} \partial \tcr{\bar{E}^\gamma} \quad \text{with}\ \, \|\tcr{\bar\varphi_k}\|_{\cC^2(B_{\mathcal{I}_k})}<\eps\]
such that
\begin{equation}
  \bar{E}_k = \bigcup_{\tcr{\gamma} \in \mathcal{I}_k} \tcr{\bar{E}^\gamma_k} \quad \text{where} \quad \tcr{\bar{E}^\gamma_k}= (\bar{\varphi}_k)_{\#}\big(\tcr{\bar{E}^\gamma}\big).
\end{equation}
\end{theorem}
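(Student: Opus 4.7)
The approach will be a contradiction argument combining $\BV$ compactness for minimizers of \eqref{def:prescribedcurvature} with interior regularity for prescribed curvature problems in two dimensions, culminating in a stability statement in the spirit of \cite[Sec.~4]{DecDuvPet24}. Concretely, fix $\eps_0>0$ and, toward a contradiction, take a sequence $p_{k_n}$ with $\|p_{k_n}-\bar{p}\|_{\cC^1(\overline{\Omega})}\to 0$ and associated minimizers $\bar{E}_{k_n}$ for which no admissible index set $\mathcal{I}_{k_n}$ and deformation of $\cC^2$-norm below $\eps_0$ exist. The goal will be to construct such a deformation of norm tending to zero, yielding a contradiction for $n$ large.

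First I would apply Lemma \ref{lem:upperlowerbounds} to bound $\Per(\bar{E}_{k_n}, \Omega)$ uniformly, and then extract an $L^1(\Omega)$-convergent subsequence $\1_{\bar{E}_{k_n}} \to \1_{E_\infty}$ by $\BV$ compactness. Lower semicontinuity of the perimeter together with the strong $L^d$ convergence of $p_{k_n}$ shows $E_\infty$ minimizes \eqref{def:prescribedcurvature} with $p=\bar{p}$, so by \eqref{eq:MCunique} there exists $\mathcal{I} \subset \{1,\dots,N\}$ with $E_\infty = \bigcup_{j \in \mathcal{I}} \bar{E}^j$. The lower mass and perimeter bounds of Lemma \ref{lem:upperlowerbounds}, together with $\dist(\bar{E}, \partial \Omega) > 0$ from \eqref{ass:maxminawayfrombdy} and uniform density estimates for quasi-minimizers of the perimeter (see \cite{Mag12}), then force Hausdorff convergence $\bar{E}_{k_n} \to E_\infty$ as well as a bijective matching $j \mapsto j'$ between indecomposable components of $\bar{E}_{k_n}$ and of $E_\infty$ for $n$ large, without any component of $\bar{E}_{k_n}$ touching $\partial \Omega$.

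The main step is to promote this matching to $\cC^2$ convergence of normal parametrizations. For this, I would exploit that the Euler--Lagrange equation for \eqref{def:prescribedcurvature} prescribes the mean curvature of $\partial \bar{E}_{k_n}$ inside $\Omega$ to be $p_{k_n}$, a function uniformly bounded in $\cC^1(\overline\Omega)$ by Assumption \eqref{ass:smoothduals}. Standard interior regularity for $\Lambda$-minimizers of the perimeter in $d=2$ then yields uniform $\cC^{2,\alpha}$ estimates on each $\partial \bar{E}_{k_n}^j$. Combined with the Hausdorff convergence above and the $\cC^2$ regularity of $\partial \bar{E}^{j'}$, each $\partial \bar{E}_{k_n}^j$ can be represented as the normal graph of a $\cC^2$ function $\varphi_n^j$ over $\partial \bar{E}^{j'}$, and an Arzelà--Ascoli extraction combined with uniqueness of the $L^1$-limit forces $\|\varphi_n^j\|_{\cC^2}\to 0$; this is exactly the kind of quantitative stability conclusion established in \cite[Sec.~4]{DecDuvPet24} in the two-dimensional setting. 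Gluing the $\varphi_n^j$ across the disjoint pieces $\partial \bar{E}^{j'}$ produces a deformation on $B_{\mathcal{I}_{k_n}}$ of $\cC^2$-norm below $\eps_0$ for $n$ large, contradicting the working hypothesis.

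The hard part will be the last step: extracting $\cC^2$-small normal graphs from an a priori only $L^1$ convergence of sets. This needs both the uniform boundary regularity (available via the $\cC^1$ curvature datum from \eqref{ass:smoothduals}) and the exclusion of pathological behavior of components — splitting, shrinking, or drifting to $\partial \Omega$ — all of which are prevented by Lemma \ref{lem:upperlowerbounds} together with \eqref{ass:maxminawayfrombdy}. The stability framework of \cite{DecDuvPet24} supplies precisely this $\cC^2$-type conclusion in dimension two, which is the reason for restricting the theorem to $d=2$.
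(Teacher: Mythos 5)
Your proposal hits the right ingredients but glosses over the precise obstacle that the paper dedicates an entire appendix proposition to, and this is a genuine gap. The result of \cite[Prop.~4.1]{DecDuvPet24} that you invoke at the end for the $L^1$-to-$\cC^2$ upgrade is formulated for the \emph{unconstrained} prescribed curvature problem in $\R^2$, whereas Algorithm \ref{alg:abstractonecut} produces minimizers of the \emph{constrained} problem \eqref{def:prescribedcurvature} in $\Omega$. Before any of the interior regularity, density estimates, or stability machinery can be applied, one must establish that the $\bar{E}_{k_n}$ stay at a uniform distance from $\partial\Omega$ and that their relative perimeter $\Per(\bar{E}_{k_n},\Omega)$ coincides with their full perimeter $\Per(\bar{E}_{k_n},\R^2)$, so that they are actual minimizers of an $\R^2$ problem with a suitably extended curvature. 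Your sentence ``without any component of $\bar{E}_{k_n}$ touching $\partial\Omega$'' assumes exactly this, but the reasoning you give — density estimates plus $\dist(\bar{E},\partial\Omega)>0$ — is circular: uniform interior density estimates only control points of $\partial\bar{E}_{k_n}$ away from $\partial\Omega$, and the relative perimeter does not penalize slivers along $\partial\Omega$, so $L^1$-closeness to $E_\infty$ alone does not rule them out. Similarly, the perimeter lower bound \eqref{eq:perbounds} of Lemma \ref{lem:upperlowerbounds} that you invoke is only available once $\Per(\bar{E}_n,\Omega)=\Per(\bar{E}_n,\R^d)$ has been established.

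The paper resolves this by proving Proposition \ref{prop:globalize} in the appendix: using convexity of $\Omega$ and compactly supported $\cC^1$ extensions $\widehat{p}_k$ of $p_k$, it shows that for $k$ large the constrained and unconstrained minimizer sets coincide, with $\|\widehat{p}_k-\widehat{p}\|_{\cC^1(\R^2)}\to 0$. After that, Theorem \ref{thm:deformationinsert} is literally a citation of \cite[Prop.~4.1]{DecDuvPet24}. So your plan after the globalization step is essentially what the reference to \cite{DecDuvPet24} does; the missing piece is the transfer from $\Omega$ to $\R^2$, which is nontrivial and not just bookkeeping. If you want to avoid Proposition \ref{prop:globalize}, you would need to redo the argument of \eqref{eq:deltasubopt}--\eqref{eq:vanishingnuisance}: show that density estimates plus the barrier $\dist(\bar{E},\partial\Omega)>0$ and the vanishing symmetric difference $|\bar{E}_{k_n}\Delta E_\infty|$ rule out any mass of $\bar{E}_{k_n}$ in a neighborhood of $\partial\Omega$ — this is precisely what the proposition spells out.
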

\begin{proof}
\tcr{By Assumption \eqref{ass:maxminawayfrombdy}, we recall that there exists a smooth extension $\widehat{p}$ of $\bar{p}$ to the full space such that $\bar{E}$ is the maximal minimizer of the corresponding prescribed mean curvature problem. Due to Assumption \eqref{ass:smoothduals} as well as Proposition \ref{prop:globalize}, the same holds true for $p_k$, i.e. we find smooth extensions $\widehat{p}_k$ such that $\bar{E}_k$ is a minimizer for the global problem and we have $\widehat{p}_k \rightarrow \widehat{p}$ in $L^2(\R^2) \cap \cC^1(\cl \Omega)$. The desired deformation property then follows from \cite[Prop.~4.1]{DecDuvPet24}.}
\end{proof}
\tcr{For each $k$ large enough and given the associated index set $\mathcal{I}_k$ from Theorem \ref{thm:deformationinsert}, we set $\bar{E}_{\mathcal{I}_k}=\bigcup_{\tcr{\gamma} \in \mathcal{I}_k} \tcr{\bar{E}^\gamma}$ for abbreviation}.
The construction of an improved function $\widehat{u}^s_k$ proceeds along the following outline: 
\begin{enumerate}
  \item For large $k$, the active set $\A_k$ decomposes into $N$ disjoint clusters $\tcr{\A^\gamma_k}$ such that each $E \in \tcr{\A^\gamma_k}$ is a close, smooth deformation of the corresponding optimal set $\tcr{\bar{E}^\gamma}$, see Lemma \ref{lem:noticludedlargek} and Corollary \ref{coroll:splitactive}.
  \item We estimate the difference between $u_k$ and $\bar{u}$, measured in terms of the weighted sums of the norms of the corresponding deformations, by powers of the residual $r_{\A_k}(\tcr{\lambda_k})$. We proceed similarly for the distance between the candidate set $\bar{E}_k$ and corresponding subsets of $\bar{E}$.  
  \item Summarizing the previous steps, the iterate $u_k$ can be represented as
  \begin{equation}
    u_k = \sum^N_{\tcr{\gamma}=1} \sum_{\tcr{E^{\gamma,\ell}_k} \in \tcr{\A^\gamma_k}} \tcr{\lambda^{\gamma,\ell}_k} \1_{\tcr{E^{\gamma,\ell}_k}}
  \end{equation}
  with $ \tcr{\lambda^{\gamma,\ell}_k}>0$. Exploiting the clustered structure of $\A_k$, we finally obtain the surrogate $\widehat{u}^k_s$ via localized convex combinations $\widehat{u}^s_k\coloneqq u_k + s(\widehat{v}_k-u_k)$, where
  \begin{equation}
   \widehat{v}_k=\sum_{\tcr{\gamma} \not\in \mathcal{I}_k} \sum_{\tcr{E^{\gamma,\ell}_k} \in \tcr{\A^\gamma_k}} \tcr{\lambda^{\gamma,\ell}_k} \1_{\tcr{E^{\gamma,\ell}_k}} + \sum_{\tcr{\gamma} \in \mathcal{I}_k} \Bigg( \sum_{\tcr{E^{\gamma,\ell}_k} \in \tcr{\A^\gamma_k}} \tcr{\lambda^{\gamma,\ell}_k} \Bigg) \1_{\tcr{\bar{E}^\gamma_k}},
  \end{equation}
  which partially lump the contributions of several clusters into that of one single set per cluster, while keeping the others unchanged. This local update, which stands in contrast with the global update $u^k_s$ in Theorem \ref{thm:sublinear}, allows for a refined analysis of the per-iteration descent, eventually leading to linear convergence.
\end{enumerate}

\subsubsection{Preparatory results} \label{subsubsec:prepraratorymain}
\tcr{We begin by showing a geometric consequence of our assumptions on $K$ and $\bar{E}$: 
\begin{lemma}There is $\eps_0 >0$ depending on $\bar{E}$ such that the deformation-Lipschitz property
\begin{equation}\label{eq:deflipschitz}
  \left\|K\big(\1_{\varphi_\# (\bar{E}^{\gamma})}- \1_{\bar{E}^{\gamma}}\big)\right\|_Y \leq C_{K,\bar{E}}\|\varphi\|_{\bH}
\end{equation}
holds for all $\varphi \in \cC^2(\partial \bar{E})$ with $\|\varphi\|_{\cC^2(\partial \bar{E})} \leq \eps_0$.
\end{lemma}
\begin{proof} As already noted, Assumptions \eqref{ass:maxminawayfrombdy} and \eqref{ass:smoothduals} imply that the boundaries $\partial \bar{E}^{\gamma}$ can be parametrized as closed planar $\cC^2$ curves. In that situation and assuming that $\|\varphi\|_{L^\infty(\partial \bar{E}^{\gamma})}$ is smaller than both $\dist(\bar{E}, \partial \Omega)$ and the cut locus of $\partial \bar{E}^{\gamma}$ so that parallel curves remain $\cC^2$, we can use the relevant version of Weyl's tube formula (see \cite[Sec.~1.2]{Gra04}) to estimate the left hand side as
\begin{align}
    \left\|K\big(\1_{\varphi_\# (\bar{E}^{\gamma}})- \1_{\bar{E}^{\gamma}}\big)\right\|_Y &= \sup_{\|y\|_Y \leq 1} \int_\Omega K^\ast y\, \big(\1_{\varphi_\# (\bar{E}^{\gamma}})- \1_{\bar{E}^{\gamma}}\big) \dd x \\ &\leq \|K^\ast\| \big| \varphi_\# (\bar{E}^{\gamma}) \Delta \bar{E}^{\gamma}\big| \leq 2 \|K^\ast\| \Per(\bar{E}^{\gamma}, \R^2) \|\varphi\|_{L^\infty(\partial \bar{E}^{\gamma})}.
\end{align}
Using that $\Per(\bar{E}^{\gamma}, \R^2) \leq \Per(\bar{E}, \R^2)$ and a Sobolev inequality for the one-dimensional $\bar{E}^{\gamma}$, we obtain \eqref{eq:deflipschitz}.
\end{proof}}

Recall the abbreviations $y_k=Ku_k,\bar{y}=K\bar{u}$, $p_k = -K^* \nabla F(Ku_k), \bar{p} = -K^* \nabla F(K\bar{p})$ as well as $y_k \rightarrow \bar{y}$ in $Y$ according to Theorem \ref{thm:sublinear}. We \tcr{now show} that all sets in $\A_k$ are deformations of optimal ones for large $k$.
\begin{lemma} \label{lem:noticludedlargek}
Let $\eps_0$ be given. There is $k_0 \in \N$ such that for all $k \geq k_0$ and all $E \in \A_k$, there is $j \in \{1, \dots, N\}$ such that
\begin{equation}
   \exists \varphi \in \cC^2(\partial \tcr{\bar{E}^\gamma}) \colon E= \varphi_\# \big(\tcr{\bar{E}^\gamma}\big), \quad \|\varphi\|_{\cC^2(\partial \tcr{\bar{E}^\gamma})} \leq \eps_0.
\end{equation}
\end{lemma}
\begin{proof}
Because $\|p_k - \bar{p}\|_{\cC^1(\Omega)} \to 0$, using \cite[Prop.~4.1]{DecDuvPet24} we get that there is an index $k_0 \in \N$ such that
\begin{equation}
 E \in \A_k \setminus \A_{k_0} \Rightarrow \text{there are } j \text{ and } \varphi \in \cC^2(\partial \tcr{\bar{E}^\gamma}) \text{ with } E= \varphi_\# \big(\tcr{\bar{E}^\gamma}\big), \ \|\varphi\|_{\cC^2(\partial \tcr{\bar{E}^\gamma})} \leq \eps_0.
\end{equation}
It remains to check that sets which do not satisfy the assumption will eventually be deleted. Therefore, assume that $E \in \A_k$ for all $k \in \N$ large enough. Then we have
\begin{equation}
 -\int_E \bar{p} \dd x+ \Per(E,\Omega) = \lim_{k \rightarrow \infty}-\int_E p_k \dd x+ \Per(E,\Omega)=0.
\end{equation}
This tells us that in fact, $E$ is a minimizer of 
\[\tilde{E} \mapsto -\int_{\widetilde{E}} \bar{p} \dd x + \Per(\widetilde{E},\Omega),\]
which is the functional of which $\bar{E}$ is the maximal minimizer. Since by the definition of the insertion step $E$ is indecomposable and the $\tcr{\bar{E}^\gamma}$ were defined as the indecomposable components of $\bar{E}$, there needs to be some $j \in \{1,\ldots,N\}$ such that $E = \tcr{\bar{E}^\gamma}$.
\end{proof}
\begin{corollary} \label{coroll:splitactive}
  There is some $\eps_0 >0$ such that for each $\eps < \eps_0$ we can find $k \in \N$ for which 
  \begin{gather}
    \A_k = \bigcup^N_{\tcr{\gamma}=1} \tcr{\A^\gamma_k}, \quad \tcr{\A^\gamma_k} \neq \emptyset, \quad \tcr{\A^\gamma_k} \cap \tcr{\A^\nu_k} =\emptyset \text{ if }\gamma \neq \nu, \notag \\
    \tcr{E^{\gamma,\ell}_k} \in \tcr{\A^\gamma_k} \Rightarrow \exists \tcr{\varphi^{\gamma,\ell}_k} \colon  \tcr{E^{\gamma,\ell}_k}= \left( \tcr{\varphi^{\gamma,\ell}_k}\right)_\# \big(\tcr{\bar{E}^\gamma}\big), \quad \|\tcr{\varphi^{\gamma,\ell}_k}\|_{\cC^2(\partial \tcr{\bar{E}^\gamma})} \leq \eps \label{eq:componentsclose}
  \end{gather}
  and $\dist(\tcr{E^{\gamma,\ell}_k}, \partial \Omega) > 0$ for all $\tcr{E^{\gamma,\ell}_k} \in \tcr{\A^\gamma_k}$.
  \end{corollary}
\begin{proof}
The existence of the $\tcr{\A^\gamma_k}$ satisfying \eqref{eq:componentsclose} and with $\A_k = \bigcup^N_{\tcr{\gamma}=1} \tcr{\A^\gamma_k}$ follows directly from Lemma \ref{lem:noticludedlargek}. We only need to prove that $\tcr{\A^\gamma_k \cap \A^\nu_k =\emptyset}$ for $\tcr{\gamma \neq \nu}$. This follows readily by setting $\eps_0$ small enough, since otherwise for each $\eps >0$ small enough we would be able to find $\widehat{\varphi}$ so that $\tcr{\bar{E}^\nu= \widehat{\varphi}_\# \big(\tcr{\bar{E}^\gamma}\big)}$ and $\|\widehat{\varphi}\|_{\cC^2(\partial \tcr{\bar{E}^\gamma})} \leq 2\eps$, which is impossible. Reducing $\eps_0$ further if necessary, \eqref{eq:componentsclose} together with the assumption $\dist(\bar{E}, \partial \Omega) > 0$ contained in \eqref{ass:maxminawayfrombdy} implies $\dist(\tcr{E^{\gamma,\ell}_k}, \partial \Omega) > 0$ for all $\tcr{E^{\gamma,\ell}_k} \in \tcr{\A^\gamma_k}$.
\end{proof}

Together with Lemma \ref{lem:optifinite}, we thus conclude that for every $\eps < \eps_0$ we can find $k \in \N$ such that there are $\tcr{\lambda^{\gamma,\ell}_k}>0$ with
\begin{equation}\begin{gathered}
      u_k = \sum^N_{\tcr{\gamma}=1} \sum_{\tcr{E^{\gamma,\ell}_k} \in \tcr{\A^\gamma_k}} \tcr{\lambda^{\gamma,\ell}_k} \1_{\tcr{E^{\gamma,\ell}_k}} =\sum^N_{\tcr{\gamma}=1} \sum_{\tcr{E^{\gamma,\ell}_k} \in \tcr{\A^\gamma_k}} \tcr{\lambda^{\gamma,\ell}_k} \1_{\big(\tcr{\varphi^{\gamma,\ell}_k}\big)_\#\big(\tcr{\bar{E}^\gamma}\big)}, \\\int_{\tcr{E^{\gamma,\ell}_k}} p_k \dd x= \Per(\tcr{E^{\gamma,\ell}_k},\Omega), \quad \|\tcr{\varphi^{\gamma,\ell}_k}\|_{\cC^2(\partial \tcr{\bar{E}^\gamma})} \leq \varepsilon.
\end{gathered}\end{equation}
The following lemma provides uniform bounds on the lumped sum of the coefficients associated to each cluster.
\begin{lemma} \label{lem:uniformboundslump}
There are constants $m_a,m_b>0$ for which we have 
\begin{equation}\label{eq:lumpbounds}
m_a \leq \sum_{\tcr{E^{\gamma,\ell}_k} \in \tcr{\A^\gamma_k}} \tcr{\lambda^{\gamma,\ell}_k} \leq m_b\quad \text{for all } k \text{ large enough}.
\end{equation}
\end{lemma}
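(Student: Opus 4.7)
The plan is to argue that for each $j \in \{1,\dots,N\}$ the lumped weight $S^j_k := \sum_{E^{j,\ell}_k \in \A^j_k} \lambda^{j,\ell}_k$ in fact converges to $\bar{\lambda}^j > 0$ as $k \to \infty$, from which the uniform bounds follow with, for instance, $m_a = \tfrac{1}{2}\min_j \bar{\lambda}^j$ and $m_b = 2\max_j \bar{\lambda}^j$ for all sufficiently large $k$. The geometric idea is that a sufficiently small $\cC^2$ deformation of $\bar{E}^j$ cannot alter $u_k$ on a ball comfortably contained in $\bar{E}^j$, while $u_k$ must approach $\bar{u} = \sum_{j=1}^N \bar{\lambda}^j \1_{\bar{E}^j}$ on that ball in $L^1$.

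More precisely, I would first invoke \eqref{ass:maxminawayfrombdy} together with the $\cC^2$ regularity of $\partial \bar{E}$ that implicitly underlies the analysis of this section (cf.\ Theorem \ref{thm:deformationinsert} and \eqref{ass:quadraticgrowth}): the open indecomposable components $\bar{E}^j$ are pairwise at positive distance and at positive distance from $\partial \Omega$. Hence for each $j$ one can fix a closed ball $B_j \subset \bar{E}^j$ with $\dist\big(B_j,\, \partial \bar{E}^j \cup \bigcup_{i \neq j} \bar{E}^i \cup \partial \Omega\big) \geq d_0$ for some uniform $d_0 > 0$. Next, by the extension property recalled in the Notation subsection, every $\varphi \in \cC^2(\partial \bar{E}^j, \R^2)$ with $\|\varphi\|_{\cC^2(\partial \bar{E}^j)} \leq \eps$ extends to a $\cC^2$ diffeomorphism $\psi : \R^2 \to \R^2$ satisfying $\|\psi - \Id\|_{\cC^0(\R^2)} \leq C \eps$ for a fixed constant $C$. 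Shrinking $\eps$ in Corollary \ref{coroll:splitactive} so that $C \eps < d_0$ and taking $k$ correspondingly large, the representation \eqref{eq:ukdeformation} guarantees that $B_j \subset E^{j,\ell}_k$ for every $E^{j,\ell}_k \in \A^j_k$ and that $B_j \cap E^{i,\ell}_k = \emptyset$ for every $E^{i,\ell}_k \in \A^i_k$ with $i \neq j$.

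Consequently, on $B_j$ the iterate reduces to the constant $u_k \equiv S^j_k$, so that
\[S^j_k |B_j| = \int_{B_j} u_k \dd x \xrightarrow{k \to \infty} \int_{B_j} \bar{u} \dd x = \bar{\lambda}^j |B_j|,\]
where the convergence uses that \eqref{ass:maxminawayfrombdy} and \eqref{ass:strictcomplementarity} imply uniqueness of the minimizer $\bar{u}$, so by Theorem \ref{thm:sublinear} the entire sequence $u_k$ converges to $\bar{u}$ strictly in $\BV(\Omega)$ and hence strongly in $L^1(\Omega)$. This yields $S^j_k \to \bar{\lambda}^j > 0$ and the claimed uniform bounds for all large $k$. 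The only mildly delicate point is the quantitative control $\|\psi - \Id\|_{\cC^0} \leq C \|\varphi\|_{\cC^2}$ on the extending diffeomorphism; once that is in hand, the geometric argument reduces the lumped-weight bounds to the $L^1$ convergence of $u_k$ already provided by the sublinear convergence analysis.
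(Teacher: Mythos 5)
Your proof is correct, but it takes a genuinely different route from the paper's. The paper fixes smooth cutoffs $\zeta^j$ that are $\equiv 1$ near the cluster $\A^j_k$ and vanish near the others, tests the weak convergence $u_k \wkto \bar{u}$ against $p_k\zeta^j$, converts $\int_{E^{j,\ell}_k} p_k$ into $\Per(E^{j,\ell}_k,\Omega)$ via the first-order condition \eqref{eq:optfinitedim} (and $\int_{\bar{E}^j}\bar{p}$ into $\Per(\bar{E}^j,\Omega)$ via Proposition \ref{prop:firstorderopt}), obtains $\sum_\ell \lambda^{j,\ell}_k \Per(E^{j,\ell}_k,\Omega) \to \bar{\lambda}^j\Per(\bar{E}^j,\Omega) > 0$, and then sandwiches the lumped weight using the perimeter bounds from Lemma \ref{lem:upperlowerbounds} (combined with Proposition \ref{prop:globalize} for the lower perimeter bound). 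Your argument bypasses all of this: you pick a ball $B_j$ compactly contained in $\bar{E}^j$ and observe that all sufficiently small $\cC^2$ deformations of $\bar{E}^j$ still contain $B_j$ while deformations of the other $\bar{E}^i$ still miss it, so $u_k \equiv S^j_k$ on $B_j$, and $L^1$ convergence immediately gives $S^j_k \to \bar{\lambda}^j$. This is more elementary (no cutoffs, no optimality conditions, no perimeter bounds) and actually proves the stronger statement that the lumped weights converge to $\bar{\lambda}^j$, not just that they stay bounded; the paper's route only delivers two-sided bounds. Both approaches hinge on the clustering from Corollary \ref{coroll:splitactive} and on strict complementarity \eqref{ass:strictcomplementarity}. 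The point you flag about the quantitative bound $\|\psi - \Id\|_{\cC^0} \leq C\|\varphi\|_{\cC^2}$ is indeed not explicitly stated in the Notation subsection, but it is not really delicate: one can either build the extension via a tubular neighborhood (which directly yields $\|\psi - \Id\|_{\cC^0} \leq \|\varphi\|_{\cC^0}$), or simply use that $\varphi_\#(\bar{E}^j)$ is intrinsically characterized by its boundary $(\Id + \varphi)(\partial\bar{E}^j)$, which is within $\|\varphi\|_{\cC^0} \leq \eps$ of $\partial\bar{E}^j$ in Hausdorff distance, together with a connectedness/homotopy argument in $t \mapsto (t\varphi)_\#(\bar{E}^j)$ to see that $B_j$ ends up inside rather than outside.
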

\begin{proof}
\tcr{Due to the separation of the components $\tcr{\bar{E}^\gamma}$ as well as due to \eqref{eq:componentsclose} we can find $\varepsilon>0$ small enough such that the sets
\begin{equation}
    \mathcal{N}(\tcr{\bar{E}^\gamma})= \{x \in \Omega\, \vert\, \dist(x, \bar{E}^\gamma) \leq \varepsilon \} \subset \Omega
\end{equation}
are disjoint and we have ${E}^{\gamma,\ell}_k \subset \mathcal{N}(\tcr{\bar{E}^\gamma})$ for all ${E}^{\gamma,\ell}_k \in  \A^{\gamma}_k$. Fix functions $\zeta^\gamma \in \cC^\infty$ such that for all $k$ large enough we have
\[\zeta^\gamma \equiv 1 \text{ on }\mathcal{N}(\tcr{\bar{E}^\gamma}), \quad \supp \zeta^\gamma \cap \mathcal{N}(\tcr{\bar{E}^\nu}) = \emptyset \text{ for all }\nu\neq \gamma.\]
Now, we test the weak convergence $u_k \wkto \bar{u}$ with he strongly convergent sequence $p_k \zeta^\gamma$ to obtain that
\[\sum_{\tcr{E^{\gamma,\ell}_k} \in \tcr{\A^\gamma_k}} \tcr{\lambda^{\gamma,\ell}_k} \Per\big({E}^{\gamma,\ell}_k,\Omega\big)=\int_{\Omega} p_k \zeta^\gamma u_k ~\dd x \xrightarrow[k \to \infty]{}\int_{\Omega} \bar{p} \zeta^\gamma \bar{u} ~\dd x =\bar{\lambda}^j \Per\big(\tcr{\bar{E}^\gamma},\Omega\big).\]
Using Proposition \ref{prop:globalize}, Lemma \ref{lem:upperlowerbounds}, and the assumption that $\bar\lambda^\gamma > 0$ for all $\gamma$, \eqref{ass:strictcomplementarity}, we directly obtain \eqref{eq:lumpbounds}.}
\end{proof}
Summarizing these observations, we are able to derive an estimate of the distance between $u_k$ and $\bar{u}$ measured by weighted norms of the deformations.     
\begin{lemma}\label{lem:ratestateadjoint}
For all $k\in \N$ large enough, there holds
\begin{align}
  \|y_k-\bar{y}\|_Y + \sum_{\tcr{\gamma} \in \mathcal{I}_k} \sum_{\tcr{E^{\gamma,\ell}_k} \in \tcr{\A^\gamma_k}} \tcr{\lambda^{\gamma,\ell}_k} \|\tcr{\varphi^{\gamma,\ell}_k}\|_{\bH} \leq c \sqrt{r_{\A_k}(\tcr{\lambda_k})}
\end{align}
\end{lemma}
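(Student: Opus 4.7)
The plan is to combine the strong convexity of $F$ around $\bar{y}$ (Assumption \eqref{ass:strongconv}) with the quadratic growth \eqref{ass:quadraticgrowth}, applied componentwise on each $\bar{E}^j$, to obtain a lower bound on $r_{\A_k}(u_k)$ in terms of both $\|y_k-\bar y\|_Y^2$ and a weighted sum of the squared $H^1$-norms of the deformations. The desired estimate then follows by an application of Cauchy--Schwarz using the uniform bound on the lumped weights from Lemma \ref{lem:uniformboundslump}.

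\emph{Step 1: Strong convexity yields the two-term lower bound.} By Theorem \ref{thm:sublinear} we have $y_k \to \bar{y}$ in $Y$, so for $k$ large enough $y_k \in \mathcal{N}(\bar y)$ and Assumption \eqref{ass:strongconv} together with Taylor expansion yields
\[
F(y_k) \geq F(\bar{y}) + (\nabla F(\bar{y}),\, y_k-\bar{y})_Y + \tfrac{\theta}{2}\|y_k-\bar{y}\|_Y^2 = F(\bar{y}) - \int_\Omega \bar{p}(u_k-\bar{u})\dd x + \tfrac{\theta}{2}\|y_k-\bar{y}\|_Y^2.
\]
Using the definition of $r_{\A_k}(u_k)$, the identity $\int_\Omega \bar{p}\bar{u}\dd x = \TV(\bar{u},\Omega)$ from Proposition \ref{prop:firstorderopt}, and expanding $\int_\Omega \bar{p}u_k\dd x = \sum_{j,\ell} \lambda^{j,\ell}_k \int_{E^{j,\ell}_k}\bar{p}\dd x$, one obtains
\[
r_{\A_k}(u_k) \geq \tfrac{\theta}{2}\|y_k-\bar{y}\|_Y^2 + \sum_{j=1}^N\sum_{E^{j,\ell}_k \in \A^j_k} \lambda^{j,\ell}_k \left(\Per(E^{j,\ell}_k,\Omega) - \int_{E^{j,\ell}_k}\bar{p}\dd x\right).
\]

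\emph{Step 2: Quadratic growth on each component.} By the remark following \eqref{ass:quadraticgrowth}, the quadratic growth is inherited by each indecomposable component $\bar{E}^j$. Since by Corollary \ref{coroll:splitactive} we may choose $\eps$ as small as we wish (in the $\cC^2$ norm, hence also in $H^1$) provided $k$ is sufficiently large, the smallness hypothesis $\|\varphi^{j,\ell}_k\|_{H^1(\partial\bar{E}^j)}\leq \eps_0$ is satisfied, and using the optimality identity $\Per(\bar{E}^j,\Omega) - \int_{\bar{E}^j}\bar{p}\dd x = 0$ we obtain
\[
\Per(E^{j,\ell}_k,\Omega) - \int_{E^{j,\ell}_k}\bar{p}\dd x \geq \kappa\,\|\varphi^{j,\ell}_k\|_{H^1(\partial\bar{E}^j)}^2.
\]
Substituting yields
\[
r_{\A_k}(u_k) \;\geq\; \tfrac{\theta}{2}\|y_k-\bar{y}\|_Y^2 + \kappa\sum_{j=1}^N\sum_{E^{j,\ell}_k \in \A^j_k} \lambda^{j,\ell}_k \|\varphi^{j,\ell}_k\|_{H^1(\partial\bar{E}^j)}^2.
\]

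\emph{Step 3: Conclusion via Cauchy--Schwarz.} The first term gives immediately $\|y_k-\bar{y}\|_Y \leq \sqrt{2/\theta}\,\sqrt{r_{\A_k}(u_k)}$. For the weighted sum of $H^1$-norms, the uniform upper bound $\sum_{E^{j,\ell}_k\in \A^j_k}\lambda^{j,\ell}_k \leq m_b$ from Lemma \ref{lem:uniformboundslump} combined with Cauchy--Schwarz applied on each cluster gives
\[
\sum_{E^{j,\ell}_k\in \A^j_k}\lambda^{j,\ell}_k\|\varphi^{j,\ell}_k\|_{H^1(\partial\bar{E}^j)} \leq \Bigg(\sum_{E^{j,\ell}_k\in \A^j_k}\lambda^{j,\ell}_k\Bigg)^{\!1/2}\Bigg(\sum_{E^{j,\ell}_k\in \A^j_k}\lambda^{j,\ell}_k\|\varphi^{j,\ell}_k\|_{H^1(\partial\bar{E}^j)}^2\Bigg)^{\!1/2}.
\]
Summing over $j$ (and a fortiori restricting to $j\in\mathcal{I}_k$) and using Step 2 yields a bound of the form $c\sqrt{r_{\A_k}(u_k)}$, which combined with the bound on $\|y_k-\bar y\|_Y$ proves the claim.

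The only subtle point is ensuring that Assumption \eqref{ass:quadraticgrowth}, which is stated for $\bar{E}$ as a whole with deformations in $H^1(\partial\bar{E})$, carries over to each component $\bar{E}^j$; this is explicitly asserted in the remark following the assumption, and the relative smallness of the $\|\varphi^{j,\ell}_k\|_{H^1}$ required for its applicability follows from Corollary \ref{coroll:splitactive} by taking $k$ large.
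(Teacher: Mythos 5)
Your proof is correct and follows essentially the same route as the paper's: strong convexity of $F$ on $\mathcal{N}(\bar y)$ provides the $\|y_k-\bar y\|_Y^2$ term, the first-order optimality identities turn the remaining terms into $\sum\lambda^{j,\ell}_k(\Per(E^{j,\ell}_k,\Omega)-\int_{E^{j,\ell}_k}\bar p\,\dd x)$, and the componentwise quadratic growth \eqref{ass:quadraticgrowth} together with the lumped-weight bound of Lemma \ref{lem:uniformboundslump} gives the square-root estimate. The only cosmetic difference is that the paper converts the weighted sum of squared norms into the square of the weighted sum of norms by a single application of Jensen's inequality over all $j\in\mathcal{I}_k$ and $\ell$, whereas you apply Cauchy--Schwarz cluster-by-cluster and then sum over $j$ (which costs an extra factor $\sqrt{|\mathcal{I}_k|}\leq\sqrt N$); both are valid and yield the same kind of constant.
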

\begin{proof}
  Since $y_k \rightarrow \bar{y}$, there holds $y_k \in \mathcal{N}(\bar{y})$ for all $k \in \N$ large enough. By strong convexity of $F$ on $\mathcal{N}(\bar{y})$, we have
  \begin{equation}
      \begin{aligned}
       r_{\A_k}(\tcr{\lambda_k}) &\geq \theta \|y_k-\bar{y}\|^2_Y+ \int_\Omega \bar p(\bar{u}-u_k) \dd x -\TV(\bar{u})+ \sum^N_{\tcr{\gamma}=1} \sum_{\tcr{E^{\gamma,\ell}_k} \in \tcr{\A^\gamma_k}} \tcr{\lambda^{\gamma,\ell}_k} \Per(\tcr{E^{\gamma,\ell}_k},\Omega) \\ &= \theta \|y_k-\bar{y}\|^2_Y+   \sum^N_{\tcr{\gamma}=1} \sum_{\tcr{E^{\gamma,\ell}_k} \in \tcr{\A^\gamma_k}} \tcr{\lambda^{\gamma,\ell}_k} \left(- \int_{\tcr{E^{\gamma,\ell}_k}} \bar{p} \dd x+ \Per(\tcr{E^{\gamma,\ell}_k},\Omega)\right) 
      \end{aligned}
  \end{equation}
  where the equality follows by Proposition \ref{prop:firstorderopt}. In order to estimate the second term, note that, according to Corollary \ref{coroll:splitactive}, we have $\|\tcr{\varphi^{\gamma,\ell}_k}\|_{\bH} \leq \|\tcr{\varphi^{\gamma,\ell}_k}\|_{\cC^2(\partial \tcr{\bar{E}^\gamma})} \leq \eps_0  $ for $k$ large enough. Hence, using \eqref{ass:quadraticgrowth}, we obtain
\begin{equation}
 \begin{aligned}
 \sum^N_{\tcr{\gamma}=1} \sum_{\tcr{E^{\gamma,\ell}_k} \in \tcr{\A^\gamma_k}} \tcr{\lambda^{\gamma,\ell}_k} &\left(- \int_{\tcr{E^{\gamma,\ell}_k}} \bar{p} \dd x + \Per(\tcr{E^{\gamma,\ell}_k},\Omega)\right) \geq \kappa \sum^N_{\tcr{\gamma}=1} \sum_{\tcr{E^{\gamma,\ell}_k} \in \tcr{\A^\gamma_k}} \tcr{\lambda^{\gamma,\ell}_k} \left(\|\tcr{\varphi^{\gamma,\ell}_k}\|^2_{\bH}\right) \\ & \geq \kappa \sum_{\tcr{\gamma} \in \mathcal{I}_k} \sum_{\tcr{E^{\gamma,\ell}_k} \in \tcr{\A^\gamma_k}} \tcr{\lambda^{\gamma,\ell}_k} \left(\|\tcr{\varphi^{\gamma,\ell}_k}\|^2_{\bH} \right) \\
 & \geq \frac{\kappa}{\sum_{\tcr{\gamma} \in \mathcal{I}_k} \sum_{\tcr{E^{\gamma,\ell}_k} \in \tcr{\A^\gamma_k}} \tcr{\lambda^{\gamma,\ell}_k}} \Bigg(\sum_{\tcr{\gamma} \in \mathcal{I}_k} \sum_{\tcr{E^{\gamma,\ell}_k} \in \tcr{\A^\gamma_k}} \tcr{\lambda^{\gamma,\ell}_k} \|\tcr{\varphi^{\gamma,\ell}_k}\|_{\bH} \Bigg)^2\end{aligned}
\end{equation}
where the last step follows from Jensen inequality. Noting that 
\begin{equation}
\sum_{\tcr{\gamma} \in \mathcal{I}_k} \sum_{\tcr{E^{\gamma,\ell}_k} \in \tcr{\A^\gamma_k}} \tcr{\lambda^{\gamma,\ell}_k} \leq |\mathcal{I}_k| m_b \leq N m_b
\end{equation}
by Lemma \ref{lem:uniformboundslump}, the claimed statement follows. 
\end{proof}
By similar arguments, we quantify the distance between $\bar{E}_k$ and $\bar{E}$.
\begin{lemma} \label{lem:estofinsetset}
For all $k$ large enough, there holds
\begin{equation}
    \sum_{\tcr{\gamma} \in \mathcal{I}_k} \Bigg( \sum_{\tcr{E^{\gamma,\ell}_k} \in \tcr{\A^\gamma_k}} \tcr{\lambda^{\gamma,\ell}_k} \Bigg) \|\bar \varphi_k\|_{\bH} \leq c \sqrt{r_{\A_k}(\tcr{\lambda_k})}.
\end{equation}
\end{lemma}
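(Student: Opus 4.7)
The plan is to control $\|\bar\varphi_k\|_{\bH}$ by matching an upper bound on $-\int_{\bar{E}_k}\bar p \dd x + \Per(\bar{E}_k, \Omega)$ with a lower bound coming from the quadratic growth \eqref{ass:quadraticgrowth}. By Lemma \ref{lem:ratestateadjoint} combined with \eqref{ass:smoothduals} and Lipschitz continuity of $\nabla F$,
\[
\|p_k - \bar p\|_{\cC^1(\overline{\Omega})} \leq c\,\|y_k-\bar y\|_Y \leq c\sqrt{r_{\A_k}(u_k)} \xrightarrow[k \to \infty]{} 0,
\]
so for $k$ large enough Theorem \ref{thm:deformationinsert} yields $\bar{E}_k = \bigcup_{j \in \mathcal{I}_k}(\bar\varphi_k)_\#(\bar{E}^j)$ with $\|\bar\varphi_k\|_{\cC^2(B_{\mathcal{I}_k})}$ as small as needed. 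Extending $\bar\varphi_k$ by zero to $\partial\bar{E}^j$ for $j \notin \mathcal{I}_k$ produces a deformation of the full $\bar{E}$ whose $H^1(\partial\bar{E})$ norm equals $\bigl(\sum_{j \in \mathcal{I}_k}\|\bar\varphi_k\|^2_{\bH}\bigr)^{1/2}$ and which satisfies the smallness hypothesis of \eqref{eq:quadgrowthmax}. The pieces of $\bar{E}^j$ with $j \notin \mathcal{I}_k$ contribute $-\int_{\bar{E}^j}\bar p\dd x + \Per(\bar{E}^j,\Omega) = 0$ (Proposition \ref{prop:firstorderopt}) and zero deformation cost, and the separation of the $\bar{E}^j$ from each other and from $\partial\Omega$ guaranteed by \eqref{ass:maxminawayfrombdy} makes the perimeters decouple. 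Applying \eqref{eq:quadgrowthmax} then gives
\[
-\int_{\bar{E}_k}\bar p \dd x + \Per(\bar{E}_k, \Omega) \geq \kappa \sum_{j \in \mathcal{I}_k}\|\bar\varphi_k\|^2_{\bH}.
\]

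For the matching upper bound, I would test the minimality of $\bar{E}_k$ in \eqref{def:prescribedcurvature} with $p=p_k$ against the competitor $\bar{E}_{\mathcal{I}_k} := \bigcup_{j \in \mathcal{I}_k}\bar{E}^j$ and use $\int_{\bar{E}_{\mathcal{I}_k}}\bar p \dd x = \Per(\bar{E}_{\mathcal{I}_k}, \Omega)$ from Proposition \ref{prop:firstorderopt} to convert the $p_k$-cost into a $\bar p$-cost, obtaining
\[
-\int_{\bar{E}_k}\bar p \dd x + \Per(\bar{E}_k, \Omega) \leq \int_{\bar{E}_k \Delta \bar{E}_{\mathcal{I}_k}} |p_k - \bar p| \dd x \leq \bigl|\bar{E}_k \Delta \bar{E}_{\mathcal{I}_k}\bigr|\,\|p_k-\bar p\|_{\infty}.
\]
Standard volume estimates for $\cC^2$ boundary deformations bound $|\bar{E}_k \Delta \bar{E}_{\mathcal{I}_k}| \leq C\sum_{j \in \mathcal{I}_k}\|\bar\varphi_k\|_{L^1(\partial\bar{E}^j)} \leq C\sum_{j \in \mathcal{I}_k} \|\bar\varphi_k\|_{\bH}$. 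Chaining the two bounds together with $\|p_k-\bar p\|_\infty \leq c\sqrt{r_{\A_k}(u_k)}$ produces the self-absorbing inequality
\[
\kappa \sum_{j \in \mathcal{I}_k} \|\bar\varphi_k\|^2_{\bH} \leq c\sqrt{r_{\A_k}(u_k)}\,\sum_{j \in \mathcal{I}_k} \|\bar\varphi_k\|_{\bH},
\]
which gives $\sum_{j \in \mathcal{I}_k}\|\bar\varphi_k\|_{\bH} \leq c\sqrt{r_{\A_k}(u_k)}$ and in particular $\max_{j}\|\bar\varphi_k\|_{\bH} \leq c\sqrt{r_{\A_k}(u_k)}$. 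Multiplying by the uniform per-cluster weight bound $\sum_\ell \lambda^{j,\ell}_k \leq m_b$ from Lemma \ref{lem:uniformboundslump} and summing over the finitely many $j \in \mathcal{I}_k \subset \{1,\ldots,N\}$ concludes the proof.

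The main subtlety is the localized upper bound: the naive estimate based solely on Lemma \ref{lem:optifinite}, namely $-\int_{\bar{E}_k}\bar p\dd x + \Per(\bar{E}_k,\Omega) \leq \int_{\bar{E}_k}(p_k-\bar p)\dd x \leq |\Omega|\,\|p_k-\bar p\|_\infty$, would give $O(\sqrt{r_{\A_k}(u_k)})$ on the right-hand side and therefore only the suboptimal rate $\|\bar\varphi_k\|_{\bH} = O(r_{\A_k}(u_k)^{1/4})$. The key is to exploit the minimality of $\bar{E}_k$ against the \emph{structurally correct} competitor $\bar{E}_{\mathcal{I}_k}$ together with the cancellation coming from the first-order optimality $\int_{\bar{E}_{\mathcal{I}_k}}\bar p\dd x = \Per(\bar{E}_{\mathcal{I}_k},\Omega)$; this localizes the integration to the symmetric difference $\bar{E}_k \Delta \bar{E}_{\mathcal{I}_k}$, whose measure is itself linear in $\|\bar\varphi_k\|_{\bH}$, and thereby yields the sharp $\sqrt{r_{\A_k}(u_k)}$ rate.
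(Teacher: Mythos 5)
Your proposal is correct, and the route it takes differs from the paper's in two small but genuine ways, both worth noting.

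For the lower bound from quadratic growth, the paper applies the per-component growth condition weighted by the lumped coefficients $\sum_\ell \lambda^{j,\ell}_k$ and then invokes Jensen's inequality to pull the weighted sum $\sum_{j\in\mathcal{I}_k}(\sum_\ell\lambda^{j,\ell}_k)\|\bar\varphi_k\|_{\bH}$ inside the square; you instead apply \eqref{eq:quadgrowthmax} once to the zero-extended deformation of the whole $\bar E$, obtaining $-\int_{\bar{E}_k}\bar p + \Per(\bar E_k,\Omega)\geq\kappa\sum_{j\in\mathcal I_k}\|\bar\varphi_k\|^2_{\bH}$, and only introduce the coefficient weights (via $m_b$ and the finiteness of $\mathcal I_k$) at the very end. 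Both are fine; the paper's version avoids the final Cauchy--Schwarz step $\big(\sum_j a_j\big)^2\le N\sum_j a_j^2$ that you need to pass from $\sum_j a_j^2$ to $\sum_j a_j$, but that step is trivial. The more substantial divergence is the upper bound. The paper estimates $\int_\Omega(\bar p-p_k)(\1_{\bar E_k}-\1_{\bar E_{\mathcal I_k}})$ by rewriting it as the inner product $(\nabla F(\bar y)-\nabla F(y_k), K(\1_{\bar E_k}-\1_{\bar E_{\mathcal I_k}}))_Y$ and then invoking the deformation-Lipschitz assumption \eqref{ass:deflipschitz} to bound $\|K(\1_{\bar E_k}-\1_{\bar E_{\mathcal I_k}})\|_Y$ by the deformation norms, whereas you use H\"older in $L^\infty\times L^1$ together with the geometric estimate $|\bar E_k\Delta\bar E_{\mathcal I_k}|\lesssim\sum_{j\in\mathcal I_k}\|\bar\varphi_k\|_{\bH}$ for small $\cC^2$ boundary perturbations. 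Your version bypasses \eqref{ass:deflipschitz} in this lemma (though it is still needed elsewhere, e.g.\ in Lemma \ref{lem:propofwidehat} and in \eqref{eq:obslin-ub}) but leans on the $\cC^1$-convergence of $p_k$ guaranteed by \eqref{ass:smoothduals}, which both arguments already presuppose via Theorem \ref{thm:deformationinsert}; so the net set of hypotheses used is the same. One minor point to flag: you should state explicitly that if $\sum_{j\in\mathcal I_k}\|\bar\varphi_k\|_{\bH}=0$ the claim is trivial, so that dividing through in the self-absorbing step is legitimate, and you should make the Cauchy--Schwarz step explicit rather than implicit.
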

\begin{proof}
In view of Theorem \ref{thm:deformationinsert}, the same proof strategy as in Lemma \ref{lem:ratestateadjoint} can be applied, leading to 
\begin{equation}
\begin{aligned}
 \frac{\kappa}{\sum_{\tcr{\gamma} \in \mathcal{I}_k} \sum_{\tcr{E^{\gamma,\ell}_k} \in \tcr{\A^\gamma_k}} \tcr{\lambda^{\gamma,\ell}_k}} & \Bigg(\sum_{\tcr{\gamma} \in \mathcal{I}_k} \Bigg( \sum_{\tcr{E^{\gamma,\ell}_k} \in \tcr{\A^\gamma_k}} \tcr{\lambda^{\gamma,\ell}_k} \Bigg) \|\bar \varphi_k\|_{\bH} \Bigg)^2 \\&\leq \sum_{\tcr{\gamma} \in \mathcal{I}_k} \Bigg( \sum_{\tcr{E^{\gamma,\ell}_k} \in \tcr{\A^\gamma_k}} \tcr{\lambda^{\gamma,\ell}_k} \Bigg) \bigg( -\int_{\tcr{\bar{E}^\gamma_k}} \bar{p} \dd x+ \Per(\tcr{\bar{E}^\gamma_k}, \Omega) \bigg) \\ & \leq 
 m_b \sum_{\tcr{\gamma} \in \mathcal{I}_k} \bigg( -\int_{\tcr{\bar{E}^\gamma_k}} \bar{p} \dd x+ \Per(\tcr{\bar{E}^\gamma_k}, \Omega) \bigg)\end{aligned}
\end{equation}
where we use Lemma \ref{lem:uniformboundslump} as well as the fact that $\int_E \bar{p} \dd x \leq \Per(E,\Omega)$ for all $E \subset \Omega$, see Proposition \ref{prop:firstorderopt}, in the final inequality. Note that
\begin{align}
\sum_{\tcr{\gamma} \in \mathcal{I}_k} \bigg( -\int_{\tcr{\bar{E}^\gamma_k}} \bar{p} \dd x + &\Per(\tcr{\bar{E}^\gamma_k}, \Omega) \bigg) = -\int_{\bar{E}_k} \bar{p} \dd x+ \Per(\bar{E}_k,\Omega) \\ & = -\int_{\bar{E}_k} \bar{p} \dd x+ \Per(\bar{E}_k,\Omega)+ \int_{\bar{E}_{\mathcal{I}_k}} \bar{p} \dd x- \Per(\bar{E}_{\mathcal{I}_k},\Omega) \\ & \leq -\int_\Omega (\bar{p}-p_k)\left(\1_{\bar{E}_k}-\1_{\bar{E}_{\mathcal{I}_k}} \right) \dd x,
\end{align}
where the second equality uses $-\int_{\bar{E}_{\mathcal{I}_k}} \bar{p} \dd x= \Per(\bar{E}_{\mathcal{I}_k},\Omega)$ and the final inequality is due to minimality of $\bar{E}_k$, i.e.
\begin{equation}
    -\int_{\bar{E}_k} p_k \dd x + \Per(\bar{E}_k,\Omega)\leq -\int_{\bar{E}_{\mathcal{I}_k}} p_k \dd x + \Per(\bar{E}_{\mathcal{I}_k},\Omega) .
\end{equation}
Now, we further estimate
\begin{align}
    \int_\Omega (\bar{p}-p_k)\left(\1_{\bar{E}_k}-\1_{\bar{E}_{\mathcal{I}_k}} \right) \dd x &=\left(\nabla F(\bar{y})-\nabla F(y_k), K\left(\1_{\bar{E}_k}-\1_{\bar{E}_{\mathcal{I}_k}} \right)\right)_Y \\ &\leq L_{\nabla F} \|y_k-\Bar{y}\|_Y \left\|K\left(\1_{\bar{E}_k}-\1_{\bar{E}_{\mathcal{I}_k}} \right)\right\|_Y \\&\leq c \left\|K\left(\1_{\bar{E}_k}-\1_{\bar{E}_{\mathcal{I}_k}} \right)\right\|_Y \sqrt{r_{\A_k}(\tcr{\lambda_k})}. 
\end{align}
using again Lemma \ref{lem:ratestateadjoint} in the final estimate. Finally, the claim follows due to
\begin{equation}
\begin{aligned}
  \left\| K\left( \1_{\bar{E}_k}-\1_{\bar{E}_{\mathcal{I}_k}} \right)\right\|_Y & \leq \sum_{\tcr{\gamma} \in \mathcal{I}_k} \left\| K\left( \1_{\tcr{\bar{E}^\gamma_k}}-\1_{\tcr{\bar{E}^\gamma}} \right)\right\|_Y \\
  &\leq \frac{\tcr{C_{K, \bar E}}}{m_a}\sum_{\tcr{\gamma} \in \mathcal{I}_k} \Bigg( \sum_{\tcr{E^{\gamma,\ell}_k} \in \tcr{\A^\gamma_k}} \tcr{\lambda^{\gamma,\ell}_k} \Bigg) \left\| K\left( \1_{\tcr{\bar{E}^\gamma_k}}-\1_{\tcr{\bar{E}^\gamma}} \right)\right\|_Y \\ & \leq \frac{\tcr{C_{K, \bar E}}}{m_a}\sum_{\tcr{\gamma} \in \mathcal{I}_k} \Bigg( \sum_{\tcr{E^{\gamma,\ell}_k} \in \tcr{\A^\gamma_k}} \tcr{\lambda^{\gamma,\ell}_k} \Bigg) \|\bar{\varphi}_k\|_{\bH}.
\end{aligned}
\end{equation}
invoking Lemma \ref{lem:uniformboundslump} and the deformation-Lipschitz property.
\end{proof}
\subsubsection{Proof of the main result}\label{subsubsec:linearconvergence}
We \tcr{can} now prove the asymptotic linear convergence of Algorithm \ref{alg:abstractonecut}. For this purpose, and for all $k \in \N$ large enough, set 
\begin{equation}
  \widehat{v}_k\coloneqq \sum_{\tcr{\gamma} \not\in \mathcal{I}_k} \sum_{\tcr{E^{\gamma,\ell}_k} \in \tcr{\A^\gamma_k}} \tcr{\lambda^{\gamma,\ell}_k} \1_{\tcr{E^{\gamma,\ell}_k}} + \sum_{\tcr{\gamma} \in \mathcal{I}_k} \Bigg( \sum_{\tcr{E^{\gamma,\ell}_k} \in \tcr{\A^\gamma_k}} \tcr{\lambda^{\gamma,\ell}_k} \Bigg) \1_{\tcr{\bar{E}^\gamma_k}}, \quad \widehat{u}^s_k\coloneqq u_k + s(\widehat{v}_k-u_k)
\end{equation}
for all $s\in[0,1]$.
The following lemma summarizes some properties of these objects:
\begin{lemma} \label{lem:propofwidehat}
For all $k\in\N$ large enough, there holds
\begin{equation}
    \int_\Omega p_k (\widehat{v}_k-u_k) \dd x= \sum_{\tcr{\gamma} \in \mathcal{I}_k}\Bigg( \sum_{\tcr{E^{\gamma,\ell}_k} \in \tcr{\A^\gamma_k}} \tcr{\lambda^{\gamma,\ell}_k} \Bigg) \int_{\tcr{\bar{E}^\gamma_k}} p_k \dd x- \sum_{\tcr{\gamma} \in \mathcal{I}_k}\sum_{\tcr{E^{\gamma,\ell}_k} \in \tcr{\A^\gamma_k}} \tcr{\lambda^{\gamma,\ell}_k} \Per(\tcr{E^{\gamma,\ell}_k},\Omega)
\end{equation}
as well as
\begin{equation}
 r_{\A_{k+1}}(\tcr{\lambda_{k+1}})=r_{\A_{k,+}}(\tcr{\lambda_{k,+}}) \leq r_{\A_{k,+}}(\tcr{\lambda_{k,s}}), \quad    \|K(\widehat{v}_k-u_k)\|_Y \leq C_{\mathcal{D}} \sqrt{r_{\A_k}(\tcr{\lambda_k})}.
\end{equation}
for some $C_{\mathcal{D}}>0$ \tcr{and $\lambda_{k,s} \geq 0$ such that $\widehat{u}^s_k= \mathcal{U}_{\A_{k,+}}(\lambda_{k,s})$}.
\end{lemma}
\begin{proof}
The first statement follows directly by definition of $\widehat{v}_k$ as well as Lemma \ref{lem:optifinite}. Next, we start by estimating
\begin{align}
  &\left\| K\left( \widehat{v}_k-u_k \right)\right\|_Y \\ &\quad\leq\sum_{\tcr{\gamma} \in \mathcal{I}_k} \Bigg\lbrack \Bigg( \sum_{\tcr{E^{\gamma,\ell}_k} \in \tcr{\A^\gamma_k}} \!\!\tcr{\lambda^{\gamma,\ell}_k} \Bigg) \left\| K\left( \1_{\tcr{\bar{E}^\gamma_k}}-\1_{\tcr{\bar{E}^\gamma}} \right)\right\|_Y+ \sum_{\tcr{E^{\gamma,\ell}_k} \in \tcr{\A^\gamma_k}} \!\!\tcr{\lambda^{\gamma,\ell}_k} \left\| K\left( \1_{\tcr{E^{\gamma,\ell}_k}}-\1_{\tcr{\bar{E}^\gamma}} \right)\right\|_Y \Bigg \rbrack.
\end{align}
The deformation-Lipschitz property \eqref{eq:deflipschitz}, Lemma \ref{lem:ratestateadjoint} and Lemma \ref{lem:estofinsetset} impl\tcr{y}
\begin{align}
    &\sum_{\tcr{E^{\gamma,\ell}_k} \in \tcr{\A^\gamma_k}} \!\! \tcr{\lambda^{\gamma,\ell}_k} \left\| K\left( \1_{\tcr{E^{\gamma,\ell}_k}}-\1_{\tcr{\bar{E}^\gamma}} \right)\right\|_Y \\ &\qquad\leq \tcr{C_{K, \bar E}} \sum_{\tcr{\gamma} \in \mathcal{I}_k} \sum_{\tcr{E^{\gamma,\ell}_k} \in \tcr{\A^\gamma_k}} \!\! \tcr{\lambda^{\gamma,\ell}_k} \|\tcr{\varphi^{\gamma,\ell}_k}\|_{\bH} \leq C \sqrt{r_{\A_k}(\tcr{\lambda_k})}
\end{align}
as well as
\begin{align}
    &\sum_{\tcr{\gamma} \in \mathcal{I}_k} \Bigg( \sum_{\tcr{E^{\gamma,\ell}_k} \in \tcr{\A^\gamma_k}} \tcr{\lambda^{\gamma,\ell}_k} \Bigg) \left\| K\left( \1_{\tcr{\bar{E}^\gamma_k}}-\1_{\tcr{\bar{E}^\gamma}} \right)\right\|_Y  \\ &\qquad\leq \tcr{C_{K, \bar E}} \sum_{\tcr{\gamma} \in \mathcal{I}_k} \Bigg( \sum_{\tcr{E^{\gamma,\ell}_k} \in \tcr{\A^\gamma_k}} \tcr{\lambda^{\gamma,\ell}_k} \Bigg) \|\bar \varphi_k\|_{\bH} \leq C \sqrt{r_{\A_k}(\tcr{\lambda_k})},
\end{align}
finishing the proof. Finally, we note that $ r_{\A_{k+1}}(\tcr{\lambda_{k+1}})=r_{\A_{k,+}}(\tcr{\lambda_{k,+}})$ holds by construction of $\A_{k+1}$ while $r_{\A_{k,+}}(\tcr{\lambda_{k,+}}) \leq r_{\A_{k,+}}(\tcr{\lambda_{k,s}})$ follows \tcr{from} $\widehat{u}^s_k= \mathcal{U}_{\A_{k,+}}(\lambda_{k,s})$.
\end{proof}
Using these results, we can finally show linear convergence of Algorithm \ref{alg:abstractonecut}. 
\begin{theorem} \label{thm:linearconvergence}
Let Assumptions \eqref{ass:maxminawayfrombdy}-\eqref{ass:quadraticgrowth} hold. Then there is $\bar{k} \geq 1$ as well as $\zeta \in (0,1)$ such that we have
\begin{equation}
 r_{\A_{k+1}}(\tcr{\lambda_{k+1}}) \leq \zeta r_{\A_k}(\tcr{\lambda_k}), \quad  r_J(u_k) \leq C_{\text{lin}} \zeta^k \quad \text{for all} \quad k \geq \bar{k}.
\end{equation}
\end{theorem}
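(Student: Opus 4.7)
The plan is to use the surrogate $\widehat u^s_k$ as a comparator for the full coefficient update: by the optimality of $u_{k+1}$ in the finite-dimensional problem on $\A_{k,+}$, Lemma~\ref{lem:propofwidehat} already gives $r_{\A_{k+1}}(u_{k+1})\le r_{\A_{k,+}}(\widehat u^s_k)$ for every $s\in[0,1]$. The first step is to expand the right-hand side around $s=0$. The Lipschitz continuity of $\nabla F$ yields the quadratic upper bound
\[F(K\widehat u^s_k)-F(Ku_k)\le -s\int_\Omega p_k(\widehat v_k-u_k)\dd x+\frac{s^2 L_{\nabla F}}{2}\|K(\widehat v_k-u_k)\|_Y^2,\]
while the perimeter part of $r_{\A_{k,+}}(\widehat u^s_k)-r_{\A_k}(u_k)$ is exactly
\[s\Bigg[\sum_{j\in\mathcal{I}_k}\Bigg(\sum_{E^{j,\ell}_k\in\A^j_k}\lambda^{j,\ell}_k\Bigg)\Per(\bar E^j_k,\Omega)-\sum_{j\in\mathcal{I}_k}\sum_{E^{j,\ell}_k\in\A^j_k}\lambda^{j,\ell}_k\Per(E^{j,\ell}_k,\Omega)\Bigg].\]
Substituting the identity for $\int_\Omega p_k(\widehat v_k-u_k)\dd x$ from Lemma~\ref{lem:propofwidehat}, the perimeter terms involving the active sets $E^{j,\ell}_k$ cancel, and the linear coefficient in $s$ collapses to
\[\sum_{j\in\mathcal{I}_k}\Bigg(\sum_{E^{j,\ell}_k\in\A^j_k}\lambda^{j,\ell}_k\Bigg)\Delta^j_k,\qquad \Delta^j_k:=-\int_{\bar E^j_k}p_k\dd x+\Per(\bar E^j_k,\Omega).\]

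The second step is to lower bound $-\sum_{j\in\mathcal{I}_k}(\sum\lambda^{j,\ell}_k)\Delta^j_k$ by $r_{\A_k}(u_k)$ up to a constant. By Lemma~\ref{lem:compprodnegative} each $\Delta^j_k\le 0$, and by Lemma~\ref{lem:uniformboundslump} the lumped coefficient is bounded below by $m_a>0$; hence
\[\sum_{j\in\mathcal{I}_k}\Bigg(\sum_{E^{j,\ell}_k\in\A^j_k}\lambda^{j,\ell}_k\Bigg)\Delta^j_k\le m_a\sum_{j\in\mathcal{I}_k}\Delta^j_k=m_a\bigl(-\textstyle\int_{\bar E_k}p_k\dd x+\Per(\bar E_k,\Omega)\bigr),\]
using that $\{\bar E^j_k\}_{j\in\mathcal{I}_k}$ is the indecomposable decomposition of $\bar E_k$. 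Finally, Lemma~\ref{lem:optifinite} gives $-\int_{\bar E_k}p_k\dd x+\Per(\bar E_k,\Omega)\le -r_{\A_k}(u_k)/M_\infty$, so the linear coefficient is at most $-\alpha\,r_{\A_k}(u_k)$ with $\alpha:=m_a/M_\infty>0$.

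The third step handles the quadratic term: Lemma~\ref{lem:propofwidehat} delivers $\|K(\widehat v_k-u_k)\|_Y^2\le C_{\mathcal D}^2 r_{\A_k}(u_k)$, so that setting $\beta:=\tfrac12 L_{\nabla F}C_{\mathcal D}^2$ one obtains
\[r_{\A_{k+1}}(u_{k+1})\le r_{\A_k}(u_k)\bigl(1-s\alpha+s^2\beta\bigr)\quad\text{for all }s\in[0,1].\]
Minimising the quadratic in $s$ over $[0,1]$ gives the $k$-independent contraction
\[r_{\A_{k+1}}(u_{k+1})\le\zeta\,r_{\A_k}(u_k),\qquad \zeta:=1-\min\Bigl\{\tfrac{\alpha^2}{4\beta},\,\alpha-\beta\Bigr\}\wedge 1\in[0,1),\]
valid for every $k\ge\bar k$, where $\bar k$ is the threshold beyond which Corollary~\ref{coroll:splitactive} and the asymptotic lemmas of Section~\ref{subsubsec:prepraratorymain} apply. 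Iterating this recursion and using $r_J(u_k)\le r_{\A_k}(u_k)$ gives the claim with $C_{\text{lin}}=\zeta^{-\bar k}r_{\A_{\bar k}}(u_{\bar k})$.

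The main subtlety to get right is the cancellation in the linear term in $s$: the contribution $-s\sum_{j\in\mathcal{I}_k}\sum_{E^{j,\ell}_k\in\A^j_k}\lambda^{j,\ell}_k\Per(E^{j,\ell}_k,\Omega)$ coming from $-\int p_k(\widehat v_k-u_k)\dd x$ (after invoking the stationarity relation from Lemma~\ref{lem:optifinite}) must exactly cancel the corresponding perimeter contribution, leaving only the sign-definite quantities $\Delta^j_k$. Everything else is a careful assembly of the preparatory results, and nothing further from \eqref{ass:maxminawayfrombdy}--\eqref{ass:quadraticgrowth} is used directly here, as their role has already been consumed in producing the bounds of Lemmas~\ref{lem:uniformboundslump}--\ref{lem:propofwidehat}.
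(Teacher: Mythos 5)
Your argument follows the paper's proof step for step: descend along the localized surrogate $\widehat u^s_k$, invoke Lemma~\ref{lem:propofwidehat} to replace $r_{\A_{k,+}}(u_{k+1})$ by $r_{\A_{k,+}}(\widehat u^s_k)$, Taylor-expand, cancel the $\Per(E^{j,\ell}_k,\Omega)$ terms using the stationarity identity, lower-bound the linear coefficient by $m_a/M_\infty$ times the residual through Lemmas~\ref{lem:compprodnegative}, \ref{lem:uniformboundslump} and \ref{lem:optifinite}, control the quadratic term via $\|K(\widehat v_k-u_k)\|_Y^2\leq C_{\mathcal D}^2 r_{\A_k}(u_k)$, and optimize over $s\in[0,1]$. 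That is exactly the route the paper takes.

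The one slip is in the explicit contraction constant. Writing $g(s)=1-s\alpha+s^2\beta$ with $\alpha=m_a/M_\infty$ and $\beta$ as you define it, one has $\min_{s\in[0,1]}g(s)=1-\alpha^2/(4\beta)$ when $\alpha\leq 2\beta$ and $1-(\alpha-\beta)$ when $\alpha>2\beta$. Your proposed $\zeta=1-\min\{\alpha^2/(4\beta),\,\alpha-\beta\}\wedge 1$ collapses to $1-(\alpha-\beta)$ (since $\alpha^2/(4\beta)\geq\alpha-\beta$ always), which exceeds $1$ whenever $\alpha\leq\beta$ and so fails to deliver $\zeta\in(0,1)$ in that regime; the $\wedge 1$ does not repair this. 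The paper avoids the issue by using the slightly looser but always admissible bound $\zeta=1-\tfrac{\alpha}{2}\min\{1,\alpha/(2\beta)\}$, which is $<1$ for all positive $\alpha,\beta$. Replacing your formula with this one (or simply observing $g'(0)=-\alpha<0$ so $\min_{s\in[0,1]}g(s)<1$) closes the gap; everything else in the proposal is sound.
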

\begin{proof}
\tcr{Let $\lambda_{k,s} \geq 0$ satisfy $\widehat{u}^s_k= \mathcal{U}_{\A_{k,+}}(\lambda_{k,s})$}. Proceeding similarly to Theorem \ref{thm:sublinear}, we start estimating the per-iteration descent by
\begin{equation}
\begin{aligned}
  r_{\A_{k,+}}(\tcr{\lambda_{k,+}})-&r_{\A_k}(\tcr{\lambda_k}) \leq r_{\A_{k,+}}(\tcr{\lambda_{k,s}})-r_{\A_k}(\tcr{\lambda_k}) \\
  & \leq -s\int_\Omega p_k (\widehat{v}_k-u_k) \dd x + \frac{L_{\nabla F} \|K\|s^2}{2} \|K(\widehat{v}_k-u_k)\|^2_Y \\
  & \quad + s \sum_{\tcr{\gamma} \in \mathcal{I}_k} \Bigg( \Bigg( \sum_{\tcr{E^{\gamma,\ell}_k} \in \tcr{\A^\gamma_k}} \tcr{\lambda^{\gamma,\ell}_k} \Bigg) \Per(\tcr{\bar{E}^\gamma_k},\Omega) - \sum_{\tcr{E^{\gamma,\ell}_k} \in \tcr{\A^\gamma_k}} \tcr{\lambda^{\gamma,\ell}_k} \Per(\tcr{E^{\gamma,\ell}_k},\Omega) \Bigg)
\end{aligned}
\end{equation}
where the first inequality is due to $r_{\A_{k,+}}(\tcr{\lambda_{k,+}}) \leq r_{\A_{k,+}}(\tcr{\lambda_{k,s}})$, see Lemma \ref{lem:propofwidehat}, and the second follows analogously to Theorem \ref{thm:sublinear} by Taylor expansion. We further obtain
\begin{align}
     -s\int_\Omega p_k (\widehat{v}_k-u_k) \dd x  &  + s \sum_{\tcr{\gamma} \in \mathcal{I}_k} \Bigg( \Bigg( \sum_{\tcr{E^{\gamma,\ell}_k} \in \tcr{\A^\gamma_k}} \!\!\tcr{\lambda^{\gamma,\ell}_k} \Bigg) \Per(\tcr{\bar{E}^\gamma_k},\Omega) - \!\!\!\sum_{\tcr{E^{\gamma,\ell}_k} \in \tcr{\A^\gamma_k}} \!\! \tcr{\lambda^{\gamma,\ell}_k} \Per(\tcr{E^{\gamma,\ell}_k},\Omega) \Bigg) \\
     &=s\sum_{\tcr{\gamma} \in \mathcal{I}_k} \Bigg( \sum_{\tcr{E^{\gamma,\ell}_k} \in \tcr{\A^\gamma_k}} \!\!\tcr{\lambda^{\gamma,\ell}_k} \Bigg)\bigg( -\int_{\tcr{\bar{E}^\gamma_k}} p_k \dd x + \Per(\tcr{\bar{E}^\gamma_k},\Omega)\bigg)
     \\ & \leq s m_a \bigg( -\int_{\bar{E}_k} p_k \dd x + \Per(\bar{E}_k,\Omega)\bigg) \leq -s \frac{m_a}{M_\infty} r_{\A_k}(\tcr{\lambda_k}) 
\end{align}
where the equality follows from Lemma \ref{lem:propofwidehat}, the first inequality is due to Lemma \ref{lem:uniformboundslump}, noting that the summands are nonpositive and
    \begin{equation}
    -\int_{ \bar{E}_k} p_k \dd x+ \Per(\bar{E}_k,\Omega)= \sum_{j\in \mathcal{I}_k} \left\lbrack -\int_{ \tcr{\bar{E}^\gamma_k}} p_k \dd x+ \Per(\tcr{\bar{E}^\gamma_k},\Omega) \right \rbrack 
\end{equation}
and the final one follows from Lemma \ref{lem:optifinite}. Again applying Lemma \ref{lem:propofwidehat}, we arrive at
\begin{equation}
  r_{\A_{k,+}}(\tcr{\lambda_{k,+}})-r_{\A_k}(\tcr{\lambda_k}) \leq-s \frac{m_a}{M_\infty} r_{\A_k}(\tcr{\lambda_k})+s^2\frac{L_{\nabla F} \|K\| C^2_{\mathcal{D}}}{2} r_{\A_k}(\tcr{\lambda_k})   
\end{equation}
for all $s\in[0,1]$. Minimizing w.r.t $s\in[0,1]$, we arrive at
\begin{align}
  r_{\A_{k+1}}(\tcr{\lambda_{k+1}})= r_{\A_{k,+}}(\tcr{\lambda_{k,+}})\leq \left( 1-\frac{m_a}{2M_\infty} \min\left\{1,\frac{m_a}{M_\infty L_{\nabla F} \|K\| C^2_{\mathcal{D}}} \right\} \right) r_{\A_k}(\tcr{\lambda_k}).
\end{align}
yielding $r_{\A_{k+1}}(\tcr{\lambda_{k+1}}) \leq \zeta r_{\A_k}(\tcr{\lambda_k}) $ for all $k\geq \bar{k}$ where $\zeta \in (0,1)$ is defined as above and $\bar{k}$ is chosen large enough such that all previous considerations hold. Iterating this estimate, we finally obtain
\begin{align}
    r_J(u_k) \leq r_{\A_k}(\tcr{\lambda_k}) \leq  \zeta^{k-\bar{k}} r_{\A_{\bar{k}}}(\tcr{\lambda_{\bar{k}}} )= \frac{r_{\A_{\bar{k}}}(\tcr{\lambda_{\bar{k}}})}{\zeta^{\bar{k}}} \zeta^k
\end{align}
finishing the proof.
\end{proof}

\section{Numerical results on triangular meshes with PDE constraints} \label{sec:numerics}
In the following, we present two numerical experiments in which we apply the presented algorithm to both elliptic and parabolic PDE-constrained control problems with distributed observations on $\Omega=(-1,1)^2$. Analogous to \cite{CriIglWal23}, we fit these into the abstract framework of \eqref{def:BVprob} by introducing a control-to-state operator $K \colon L^q(\Omega) \to L^2(\Omega)$, mapping the control input $u$ to observations of the corresponding PDE solution $y$; \tcr{see Equations \eqref{eq: K-par}, \eqref{eq: K-ell} for the precise definition of $K$ in the two settings}. Considering the quadratic loss $F(\cdot)= \frac{1}{2\alpha} \|\cdot-y_d\|^2$, we arrive at
\begin{align}\tag{$\cal{DP}$}
    \min_{u \in P_0(\mathcal{T}) } \frac{1}{2\alpha} \|y-y_d\|^2_{L^2(\Omega)}+ \TV(u,\Omega) 
\end{align}
where $\alpha=10^{-4}$ is a regularization parameter and $y_d$ are given observations.

 For the practical implementation of Algorithm \ref{alg:abstractonecut}, we denote by $P_0(\mathcal{T})$ and $P_1(\mathcal{T})$ the spaces of piecewise constant and piecewise linear and continuous finite elements on a pseudorandom triangulation $\mathcal{T}$ of $\Omega$. The discretized control-to-observation operator $K_h \colon P_0(\mathcal{T}) \to P_1(\mathcal{T})$ is obtained implicitly by replacing the underlying PDE \tcr{solution} with its finite element approximation. Moreover, as suggested in Remark \ref{rem:extension}, we keep $\Omega$ in the active set, i.e. $\Omega \in \A_k$ for all $k\in\N$. Applying Algorithm \ref{alg:abstractonecut} in the discretized setting leads to subproblems of the form  \begin{equation}\label{eq: pre-disc}
    \argmin_{E\in \S_\T(\Omega)}\; -\int_E p_k \;dx + \Per(E,\Omega)\tag{$\cal{DMC}$}
\end{equation} 
where $\S_\T(\Omega)$ denotes the class of triangulated subsets of~$\Omega$ and $p_k=-K^*_h(K_h u_k-y_d)$ can be obtained by solving one adjoint PDE, \tcr{which is described in Equation \eqref{eq: K-par-ad} in the parabolic case and it corresponds to the original PDE in the elliptic setting}. As described in \cite[Section 4.2.2]{CriIglWal23}, \eqref{eq: pre-disc} can be solved exactly and efficiently by reducing it to a minimal graph cut problem on \tcr{the} dual graph of the mesh \tcr{augmented by a source $s$ and a sink $t$} and applying modern max-flow algorithms, \tcr{see} \cite{BoyKol04}. 
Once a new minimizer $ \bar E_k$ of \eqref{eq: pre-disc} is computed, we identify its indecomposable components $\{\tcr{\bar{E}^\gamma_k}\}^{n_k}_{j=1}$ by searching for strongly connected components in the residual graph. By construction, if $ \bar E_k$ corresponds to the set of nodes $I_{\tcr{t}}:=I\cup\{\tcr{t}\}$ in the dual graph, the directed edges from $I_{\tcr{t}}$ to its complement have zero residual capacity, ensuring that the strongly connected components within $I_{\tcr{t}}$ can be used to find a valid decomposition of $\bar E_k$ into its indecomposable parts. 
Practically, this computation is carried out using the NetworkX Python library, which provides an efficient implementation tailored for directed graphs. 
The worst-case complexity of this computation is $O(n+e)$, \tcr{see} \cite{NuSo}, where $n$ is the number of nodes, and $e$ is the number of edges, making this computation theoretically more efficient than a new cut. In practice, on a dual graph with approximately $5\cdot 10^5$ nodes, identifying the strongly connected components takes around $3$ seconds on average, compared to roughly $12$ seconds required for computing a new cut. 
Finally, we compute the observations $K \tcr{_h\1_{\bar{E}^\gamma_k}} $ associated with the computed components and add them to a separate list which is pruned analogously to the set $\A_k$. 
As a consequence, the solution of the finite-dimensional coefficient update problem can be realized without further PDE solves. In practice, and as already described in \cite{CriIglWal23}, this is done by employing a semismooth Newton method based on the normal map reformulation, using the weights of the previous iterate as a warm-start. Since $\A_k$ is usually small, the additional computational effort of solving these subproblems is negligible compared to the rest, \tcr{leading to the solution of $n_k+1$ PDEs and the computation of one graph cut per-iteration.}    
The spatial discretization for the numerical examples was performed using triangular meshes generated through the mshr component in the FEniCS framework and set up to produce a symmetric output with respect to both the $x$ and $y$ axes. These meshes contained approximately $3\cdot 10^5$ and $5\cdot 10^5$ triangles for the first and second examples, respectively. In both settings, the algorithm is run until the convergence indicator
$$j_k:=\int_{\bar{E}_k} p_{k} \tcr{\dd x} - \Per(\bar{E}_k, \Omega)\geq 0,$$
is smaller than $10^{-10}$. In view of Lemma \ref{lem:optifinite}, $j_k$ is, up to a multiplicative constant given by the $L^\infty$ norm of the sought solution, an upper bound on the residual $r_J(u_k)$.

All computations were carried out on a 2021 MacBook Pro featuring a 10-core M1 Max CPU. The Python code for our implementation \tcr{and} configuration details to reproduce the examples presented can be found at \url{https://doi.org/10.5281/zenodo.15231157}.

\subsection{A parabolic example}
\tcr{First,} we consider \tcr{ a classical inverse problem inspired by inverse heat conduction, the} parabolic problem
\begin{equation}\label{eq: K-par}
    \partial_t y-\Delta y + \frac{1}{2}y=0~ \text{in} \; (0,T)\times \Omega,\quad 
            y=0~ \text{in} \; [0,T]\times \partial \Omega, \quad 
            y=u  ~\text{in} \; \{0\}\times \Omega,
\end{equation}
in which the control $u$ enters as initial \tcr{condition} and endtime observations \tcr{at $T=0.02$} are considered, \tcr{that is to say} $Ku=y(T,\cdot)$. \tcr{The corresponding adjoint operator satisfies $K^*z=q(0,\cdot)$ where $q$ is determined from 
\begin{equation}\label{eq: K-par-ad}
    -\partial_t q-\Delta q + \frac{1}{2}q=0~ \text{in} \; (0,T)\times \Omega,\quad 
            q=0~ \text{in} \; [0,T]\times \partial \Omega, \quad 
            q=z  ~\text{in} \; \{T\}\times \Omega,
\end{equation}}
For time integration, we apply the implicit Euler scheme with a uniform partition into $9$ subintervals and set $y_d=Ku_d$ where $u_d$ is \tcr{the piecewise constant function} depicted in Figure \ref{fig:toy}. 

The reconstructed minimizer obtained from Algorithm \ref{alg:abstractonecut} in the planar (parabolic) setting is shown in Figure \ref{fig:solution}. As expected, the computed approximation closely resembles $u_d$ while exhibiting a loss of contrast and noticeable smoothing \tcr{of the geometry of level sets} due to the nearly isotropic TV-regularization afforded by the use of a pseudo-random mesh. \tcr{These are both features which are expected from the original formulation \eqref{def:BVprob} already in the continuum setting, so they are not related to the choice of discretization or algorithm.}

\begin{figure}[ht]
    \centering
    \subfigure[Toy control $u_d$]{\includegraphics[width=0.45\textwidth]{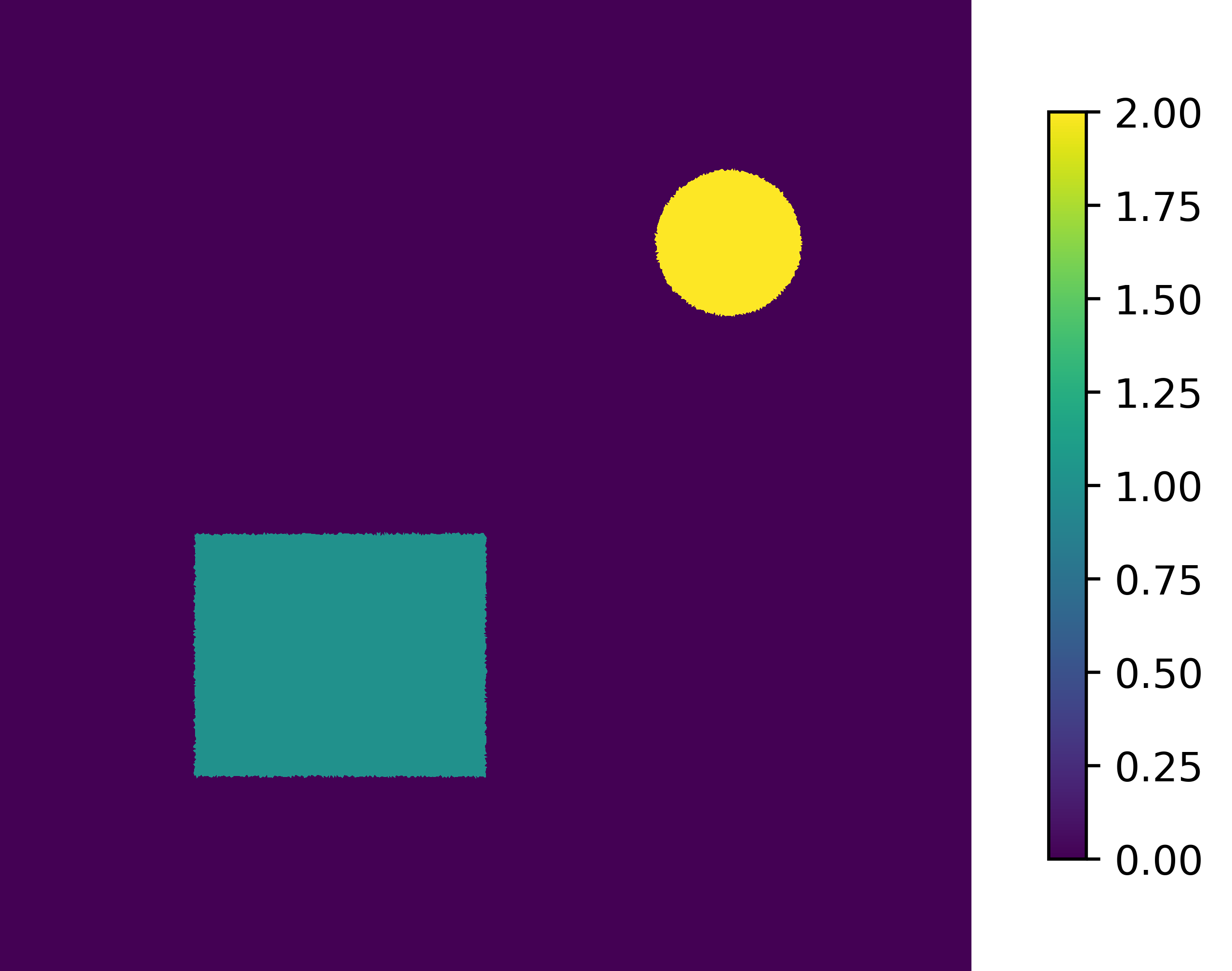}\label{fig:toy}}\hfill
    \subfigure[Computed minimizer $\bar{u}$]{\includegraphics[width=0.45\textwidth]{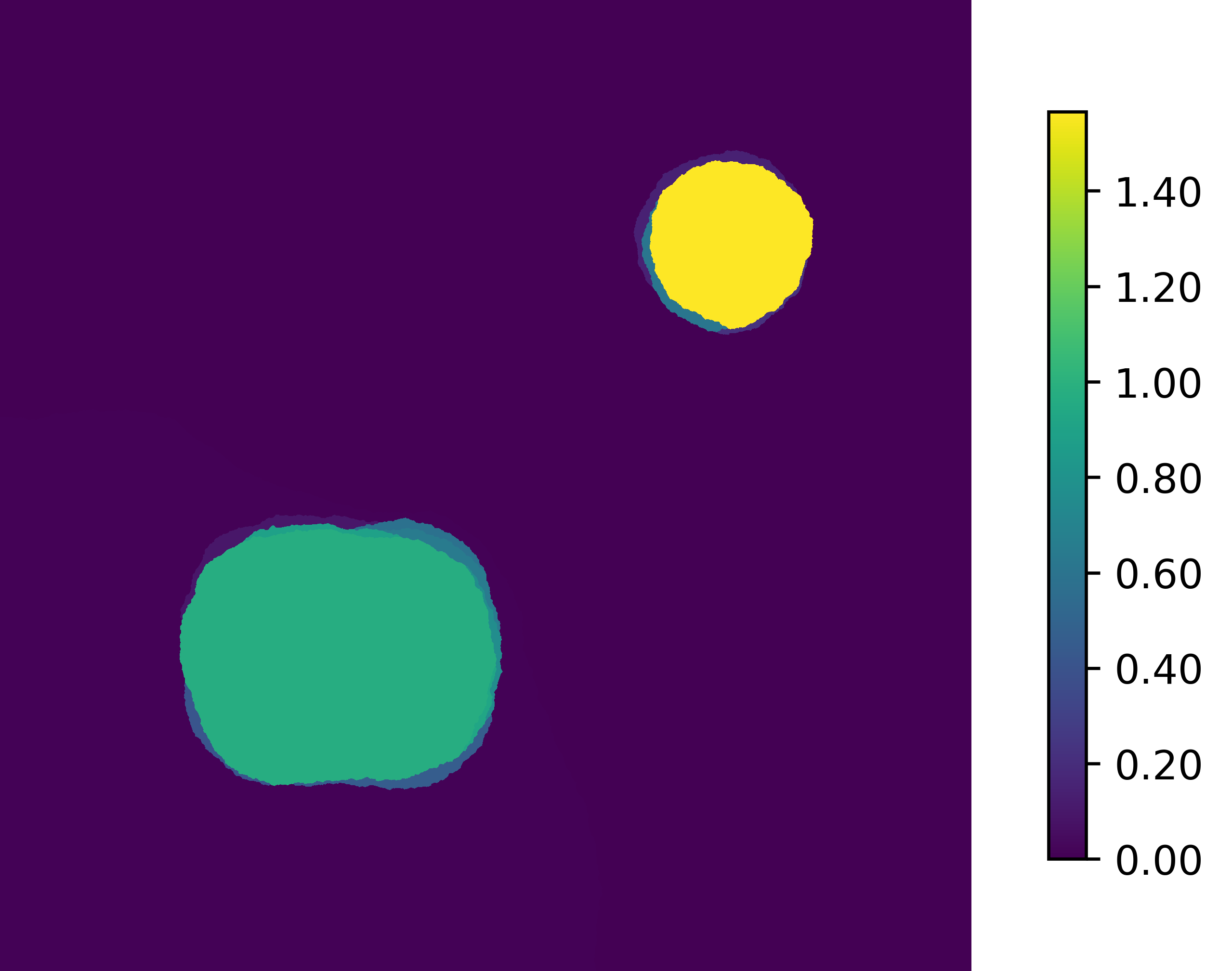}\label{fig:solution}}
    \caption{Toy control $u_d$ and output $\bar{u}$ of Algorithm \ref{alg:abstractonecut} for the parabolic problem}
    \label{fig:parabolic}
\end{figure}
\subsection{An elliptic example}\label{sec:elliptic}
As a second example, we consider \tcr{an} elliptic problem \tcr{which, although not physically relevant, is used as a standard test case for total variation regularized optimal control. It originated in \cite{ClaKun11} and was also used in \cite{HafMan22} and} \cite[Section 6.3]{CriIglWal23}. More in detail, we set $y_d= \1_{(-0.5,0.5)^2}$ and $Ku=y$ where $y$ satisfies
\begin{equation}\label{eq: K-ell}
-\Delta y= u~ \text{in } \Omega, \quad y=0 ~ \text{on } \partial\Omega.
\end{equation}
Moreover, in order to be comparable to the previous results in \cite{CriIglWal23}, we drop the nonnegativity constraints on $u$ and augment Algorithm \ref{alg:abstractonecut} according to Remark \ref{rem:extension}. The computed minimizer together with $y_d$ is depicted in Figure \ref{fig:spheres}. Note that $\bar{u}$ exhibits more complex structural features due to $-\Delta y_d \not \in \BV(\Omega)$. In particular, we point out that disjoint components of level sets of $\bar{u}$ have intersecting boundaries, and jumps occur on the boundary of the domain.

\begin{figure}[ht]
    \centering
    \subfigure[Desired state $y_d$]{\includegraphics[width=0.43\textwidth]{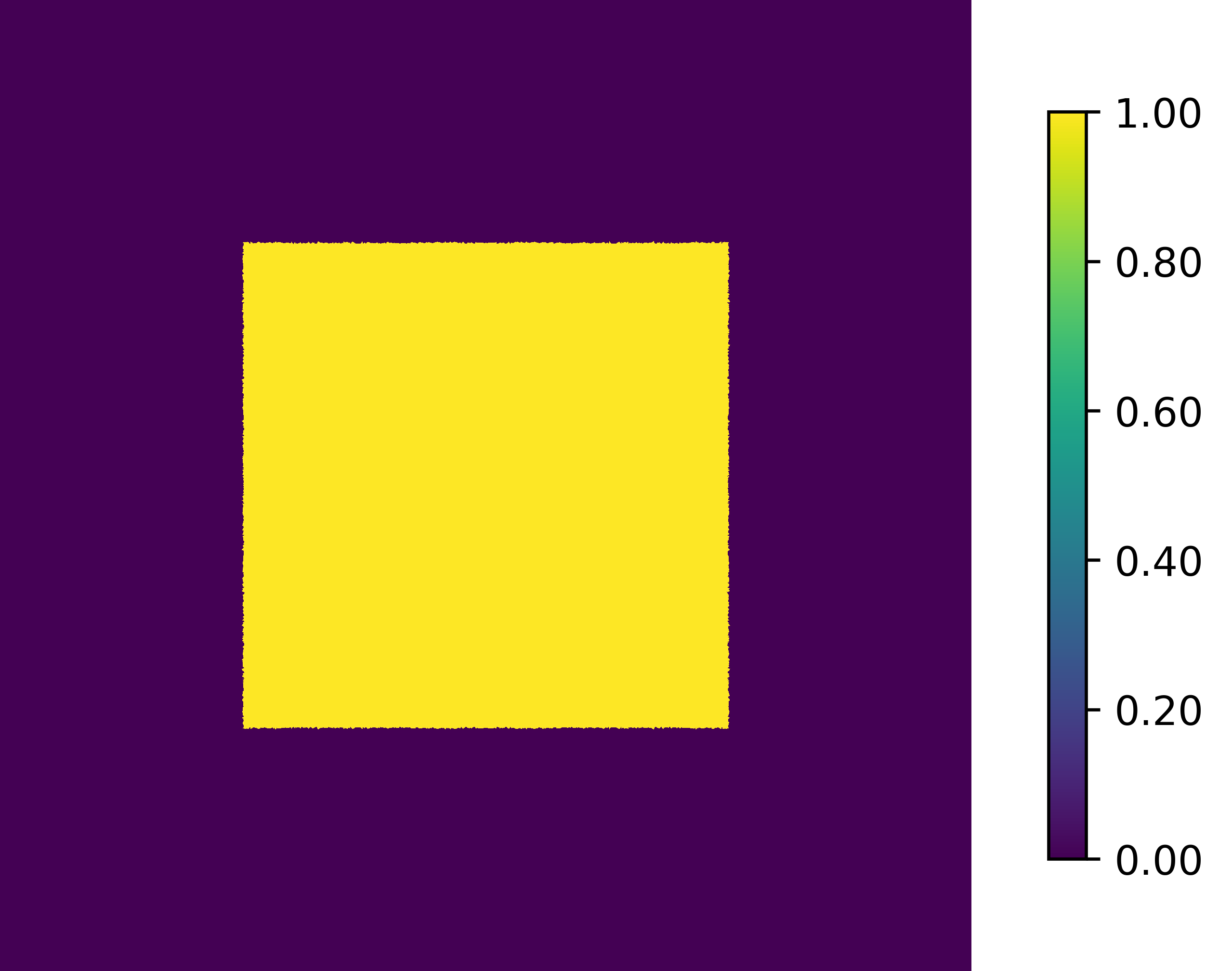}\label{fig:input}}\hfill
    \subfigure[Computed minimizer $\bar{u}$]{\includegraphics[width=0.45\textwidth]{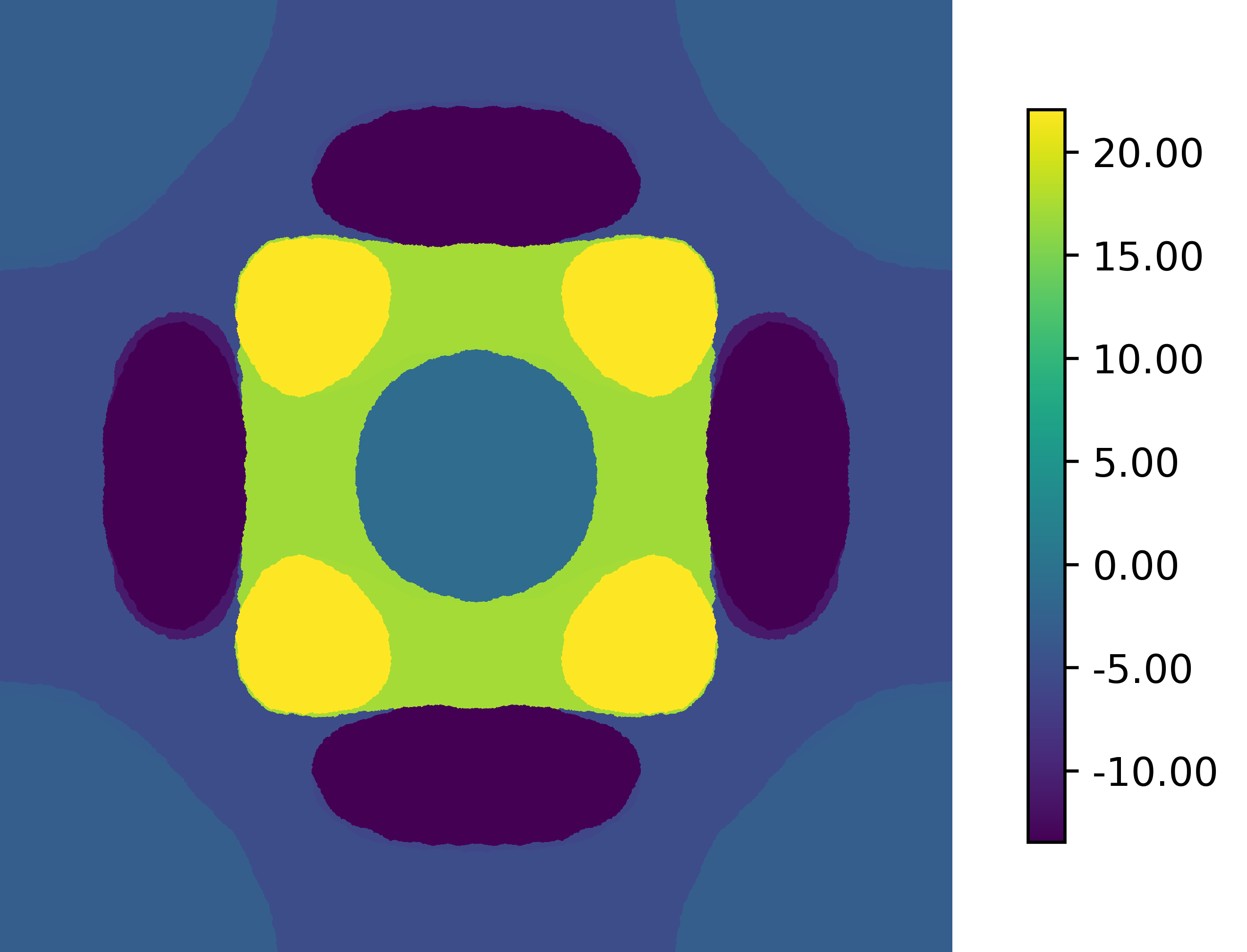}\label{fig:castle}}
    \caption{Desired state $y_d$ and output $\bar{u}$ of Algorithm \ref{alg:abstractonecut} for the elliptic problem}
    \label{fig:spheres}
\end{figure}

\subsection{Practical performance and discussion}
\begin{figure}[b!]
    \centering
    \subfigure[Convergence indicator $j_k$ and residual $r_J(u_k)$ for the parabolic example.]{\includegraphics[width=0.48\textwidth]{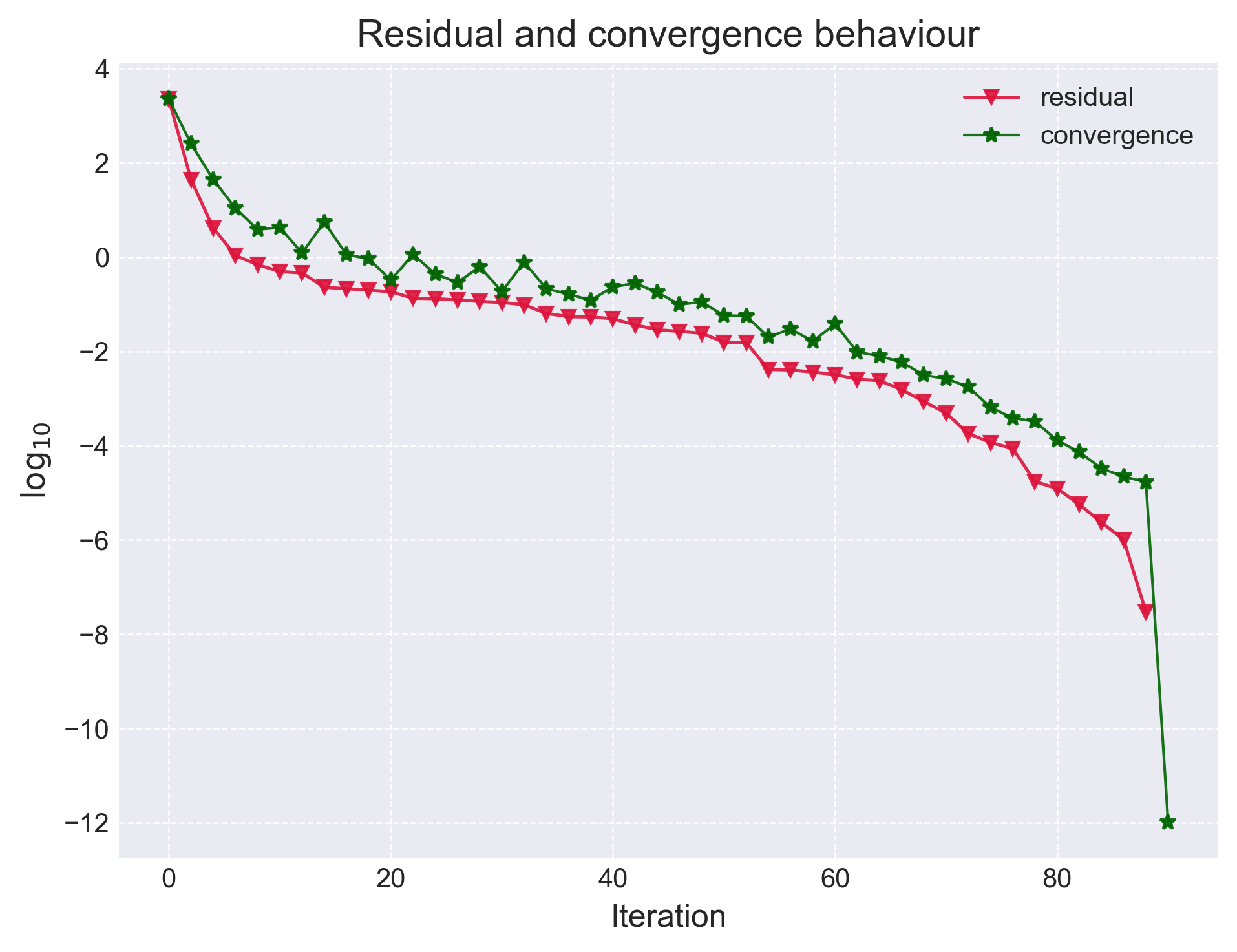}\label{fig:conv-res}}\hfill
    \subfigure[Residual $r_J(u_k)$ in the elliptic example compared with \cite{CriIglWal23} up to the second to last iteration.]{\includegraphics[width=0.48\textwidth]{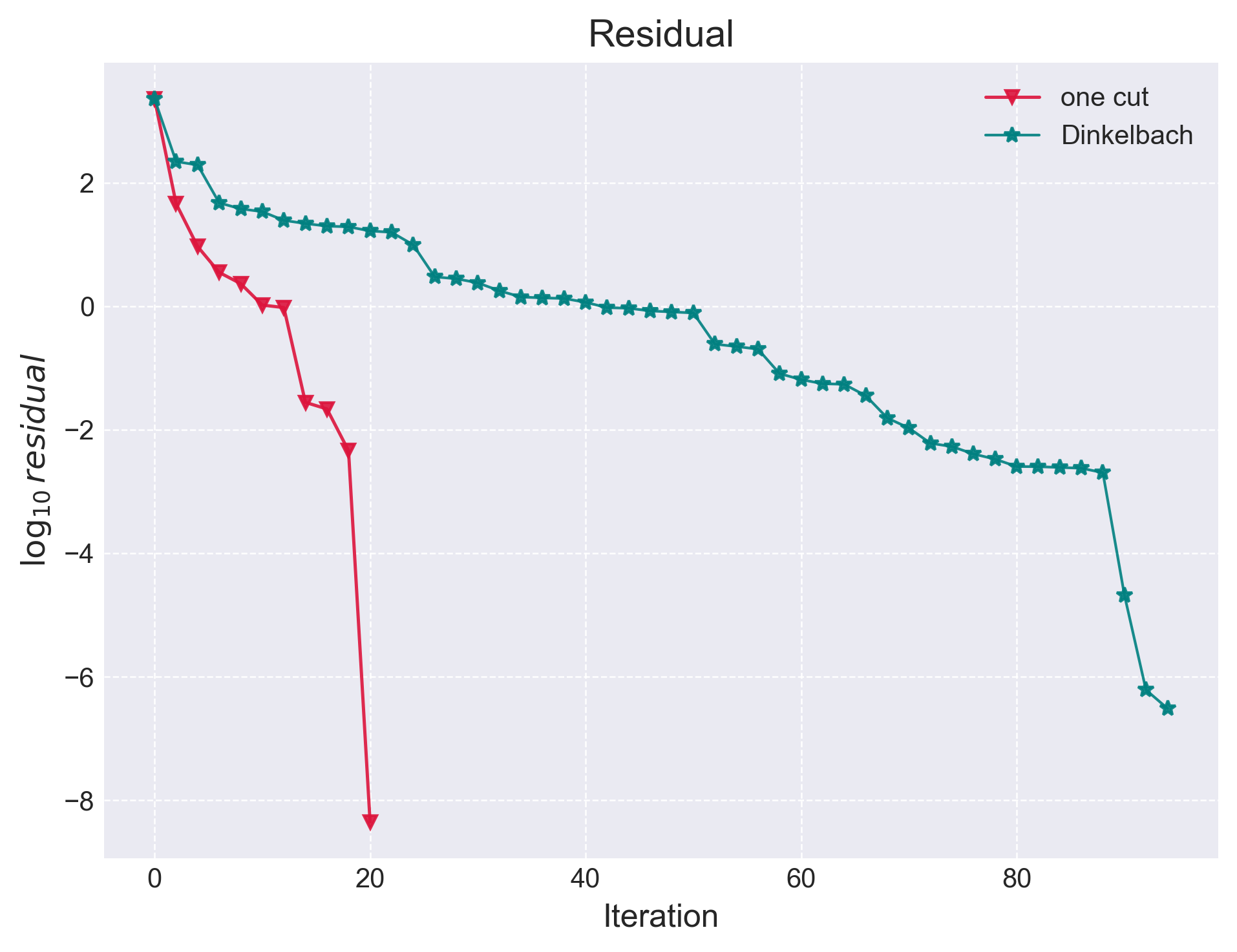}\label{fig:comp}}
    \caption{Convergence rate and residual of Algorithm \ref{alg:abstractonecut} in the two examples.}
    \label{fig:parabolic-rates}
\end{figure}
 We briefly discuss the practical performance of Algorithm \ref{alg:abstractonecut} from a quantitative perspective. For the parabolic problem, we plot the evolution of the convergence indicator $j_k$ as well as of the residual $r_J(u_k)\approx J(u_k)-J(\bar u)$ in Figure \ref{fig:conv-res}. For the latter, we observe a, at least, linear rate of convergence while the former vanishes abruptly in the last iteration which can be attributed to finite-step convergence on the discrete level as in \cite{CriIglWal23}. For the elliptic example, Algorithm \ref{alg:abstractonecut} is compared to the method presented in \cite{CriIglWal23}. The evolution of both residuals is plotted in Figure \ref{fig:comp}. While both algorithms exhibit a considerably faster than sublinear convergence behavior even in this example which in the continuum would not be covered in the setting of Section \ref{subsec:linear}, Algorithm \ref{alg:abstractonecut} remarkably outperforms the original method in terms of iterations. In the present example, we believe that this is due to the additional splitting of $\bar{E}_k$ into indecomposable components which allows for greater flexibility in the update step for the iterate and, implicitly, exploits the symmetry of the minimizer. In order to compare the numerical effort of both methods, we recall that our implementation of Algorithm \ref{alg:abstractonecut} requires \tcr{the resolution of $n_k+1$ PDEs and the computation of one graph cut per-iteration} while the method in \cite{CriIglWal23} requires $2$ PDE solves and several graph cuts, see \eqref{eq:dinkelbach}. In the present example, this leads to a combined amount of $72$ PDE solves and $21$ graph cuts for Algorithm \ref{alg:abstractonecut} while its counterpart requires significantly more, \tcr{namely} $192$ PDE solves and $426$ graph cuts. This observation is also reflected in a vastly decreased computation time, with the previous method taking 3 hours in comparison to 26 minutes for the new method. 

\appendix
\section{Extending the prescribed curvature problem from \texorpdfstring{$\Omega$}{Omega} to \texorpdfstring{$\R^d$}{Rd}}\label{sec:appendix}

\begin{proposition}\label{prop:existenceMC}\tcr{Let $p \in L^d(\R^d)$ with compact support, and $\Omega \subset \R^d$ be a bounded domain. Then the problems
\[\argmin_{E \subset \R^d} - \int_E p \dd x + \Per(E, \R^d) \quad \text{and} \quad \argmin_{E \subset \Omega} - \int_E p \dd x + \Per(E, \Omega)\]
both admit at least one minimizer. If additionally $d \leq 7$ and there is a $q>d$ such that $p \in L^q(\R^d)$ (respectively $p \in L^q(\Omega)$), then minimizers $E$ satisfy that $\partial E$ (respectively $\partial E \cap \Omega$) are $C^{1,\beta}$ hypersurfaces for some $\beta \in (0,1)$. Finally, if additionally $d=2$ and $p \in \cC^1(\R^2)$, for minimizers $E$ of the unconstrained problem we have that $\partial E$ is of class $\cC^{2}$.}
\end{proposition}
\begin{proof}
\tcr{Existence of minimizers is obtained by the direct method of the calculus of variations with respect to $L^1$ convergence of sets, that is, with $E_n$ converging to $E$ whenever $|E_n \Delta E|$ converges to zero. The perimeters are lower semicontinuous with respect to this convergence \cite[Prop.~3.38(c)]{AmbFusPal00}, and the integral terms are continuous by the integrability assumption on $p$. Coercivity is ensured by standard compactness results for sets with perimeter bounds \cite[Thm.~3.39]{AmbFusPal00}, which require them to be enclosed in a commmon set of finite Lebesgue measure. This is ensured by the boundedness of $\supp p$ and $\Omega$, respectively.}

\tcr{The $C^{1,\beta}$ regularity results can be found in \cite[Thm.~3.1]{Mas75} for all $q>d$, or \cite[Thm.~21.8]{Mag12} for the special case $q=\infty$. The higher regularity claim is part of the proof of \cite[Prop.~4.2]{DecDuvPet24}.}
\end{proof}

\begin{proposition}\label{prop:globalize}
Assume that $\Omega \subset \R^d$ is \tcr{bounded,} open and convex, $m \geq 0$, $\bar{p} \in \cC^m(\tcr{\cl \Omega})$, and that the maximal minimizer $\bar{E}$ of the prescribed mean curvature problem in $\Omega$ with curvature $\bar{p}$ (left hand side of \eqref{eq:farfrombdybar} below) satisfies $\dist(\bar{E}, \partial \Omega)>0$. Then there exists a $\cC^m$ extension $\widehat{p}$ of $\bar{p}$ to $\R^d$ such that
\begin{equation}\label{eq:farfrombdybar}
\argmin_{E \subset \Omega} - \int_E \bar{p} \dd x + \Per(E, \Omega) = \argmin_{E \subset \R^d} - \int_E \widehat{p} \dd x + \Per(E,\R^d).
\end{equation}
Further, if $p_k \to \bar{p}$ strongly in $L^d(\Omega)$, \tcr{then} for all $k$ large enough \tcr{the corresponding maximal minimizers $\bar{E}_k$ also satisfy $\dist(\bar{E}_k, \partial \Omega)>0$, and} there exist smooth extensions $\widehat{p}_k$ of $p_k$ to $\R^d$ such that
\begin{equation}\label{eq:farfrombdyk}
\argmin_{E \subset \Omega} - \int_E p_k \dd x + \Per(E, \Omega) = \argmin_{E \subset \R^d} - \int_E \widehat{p}_k \dd x + \Per(E,\R^d).
\end{equation}
\tcr{If additionally} $p_k \to \bar{p}$ in $\cC^m(\Omega)$, these extensions can be chosen such that 
\begin{equation}\label{eq:contdepext}
\big\|\widehat{p}_k - \widehat{p}\big\|_{\cC^m(\R^d)} \xrightarrow[k \to \infty]{} 0.
\end{equation}
\end{proposition}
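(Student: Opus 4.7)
The strategy is to build an extension $\widehat p$ of $\bar p$ to $\R^d$ that is so negative outside $\overline \Omega$ that any $\R^d$-minimizer cannot leak out, while coinciding with $\bar p$ on $\overline \Omega$ so that on any set with positive distance from $\partial \Omega$ the two functionals agree. The hypothesis $\dist(\bar E, \partial \Omega) > 0$ combined with Lemma \ref{lem:maxmin} (which forces every $\Omega$-minimizer to lie inside $\bar E$) supplies the needed separation on the $\Omega$-side. Concretely: first extend $\bar p$ to $\tilde p \in \cC^m(\R^d)$ via a bounded linear extension operator (which exists since convex implies Lipschitz boundary); then take a $\cC^\infty$ plateau cutoff $\chi$ with $\chi \equiv 1$ on $\overline \Omega$ and $\supp \chi \subset \{\dist(\cdot, \overline \Omega) \leq \epsilon\}$, built concretely as $\chi := \1_{\Omega^{\epsilon/2}} * \rho_{\epsilon/2}$ for a mollifier of radius $\epsilon/2$; and finally set
\[
\widehat p \coloneqq \tilde p\,\chi - C(1-\chi)
\]
for a constant $C>0$ to be chosen. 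Then $\widehat p \in \cC^m(\R^d)$, $\widehat p|_{\overline \Omega} = \bar p$, and $\widehat p \leq -c(C)$ on $\R^d \setminus \overline \Omega$, with $c(C) \to \infty$ as $C \to \infty$.

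Writing $\mathcal G_{\R^d}(E) := -\int_E \widehat p \dd x + \Per(E, \R^d)$ and $\mathcal G_\Omega(E) := -\int_E \bar p \dd x + \Per(E, \Omega)$, the inclusion $\subseteq$ in \eqref{eq:farfrombdybar} is immediate: every $\Omega$-minimizer $E$ lies in $\bar E$ by Lemma \ref{lem:maxmin}, hence $\dist(E, \partial \Omega)>0$, giving $\Per(E, \Omega) = \Per(E, \R^d)$ and $\int_E \bar p = \int_E \widehat p$, so $\mathcal G_{\R^d}(E) = \mathcal G_\Omega(E)$ and $\min \mathcal G_{\R^d} \leq \min \mathcal G_\Omega$. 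The crux is the other inclusion, namely that for $C$ large enough every $\R^d$-minimizer $F$ satisfies $|F \setminus \overline \Omega| = 0$. Comparing $F$ with $F' := F \cap \overline \Omega$ and using the subadditivity of the perimeter $\Per(F', \R^d) \leq \Per(F, \R^d) + \Per(F \setminus \overline \Omega, \R^d)$,
\[
\mathcal G_{\R^d}(F') - \mathcal G_{\R^d}(F) \leq -c(C)\,|F \setminus \overline \Omega| + \Per(F \setminus \overline \Omega, \R^d),
\]
and combining the relative isoperimetric inequality with density-estimate/touching-ball arguments typical of prescribed curvature problems (cf.\ the techniques in \cite{BreIglMer22,Mag12}) excludes arbitrarily small droplets of $F$ outside $\overline \Omega$: for $C$ large enough the right-hand side is strictly negative whenever $|F \setminus \overline \Omega| > 0$, contradicting minimality. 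Once $F \subset \overline \Omega$ up to null sets, $F$ is an $\Omega$-competitor with $\mathcal G_\Omega(F) \leq \mathcal G_{\R^d}(F)$ via $\Per(F,\Omega) \leq \Per(F,\R^d)$; combined with the reverse inequality on values, the two argmin sets coincide.

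For the perturbed statement \eqref{eq:farfrombdyk}, I invoke standard stability of minimizers of the prescribed curvature problem under $L^d$-convergence of the curvature (a $\Gamma$-convergence argument, see e.g.\ \cite{DecDuvPet24}) to get $L^1$-convergence of the maximal minimizers $\bar E_k$ of the $p_k$-problems to $\bar E$, so $\dist(\bar E_k, \partial \Omega) > \delta/2$ eventually for a uniform $\delta > 0$. Applying the construction above with $p_k$ in place of $\bar p$, the same cutoff $\chi$, and a single constant $C$ uniform in $k$ (possible because $\{\|p_k\|_{L^d(\Omega)}\}$ is bounded) yields the required $\widehat p_k$. In the $\cC^m$-convergence regime, using the same linear extension operator ensures $\tilde p_k \to \tilde p$ in $\cC^m(\R^d)$, and the fixed multiplicative/additive formula $\widehat p_k = \tilde p_k\,\chi - C(1-\chi)$ transfers this convergence directly to \eqref{eq:contdepext}. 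The main obstacle throughout is the interior-confinement step in the second paragraph: the ``pressure argument'' heuristic is clear, but making it rigorous requires carefully combining the isoperimetric inequality with density lower bounds for almost-minimizers of perimeter with bounded curvature, so that no arbitrarily small droplet of a minimizer can survive outside $\overline \Omega$.
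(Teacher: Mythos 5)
Your approach is genuinely different from the paper's, but it has a concrete flaw that undermines the main confinement step, and a subsidiary issue in the $\cC^m$-convergence part.

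The paper never tries to make the extension very negative outside $\Omega$. Instead it extends $\bar p$ by compactly supported functions $\bar p_\delta$ with $\supp \bar p_\delta$ inside the $\delta$-neighborhood of $\Omega$, and shows by a compactness/contradiction argument (extracting an $L^1$-limit of minimizers as $\delta_n \to 0$, identifying it as an interior minimizer lying at positive distance from $\partial\Omega$, and then using uniform density estimates) that for $\delta$ small enough the unconstrained minimizers are confined inside $\Omega$. You instead set $\widehat p = \tilde p\,\chi - C(1-\chi)$ with $\chi \equiv 1$ on $\overline\Omega$, and argue confinement from the bound ``$\widehat p \leq -c(C)$ on $\R^d\setminus\overline\Omega$ with $c(C)\to\infty$.'' This bound is false: a $\cC^m$ (indeed even a continuous) extension must satisfy $\widehat p(x) \to \bar p(x_0)$ as $x\to x_0 \in \partial\Omega$ from outside, and $\bar p$ need not be negative (let alone $\leq -c(C)$) on $\partial\Omega$. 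Concretely, on the collar where $\chi$ transitions from $1$ to $0$, $\widehat p$ interpolates between $\tilde p$ and $-C$, and near $\partial\Omega$ it is as close to $\tilde p$ as one likes. So the estimate
\begin{equation}
\mathcal G_{\R^d}(F\cap\overline\Omega) - \mathcal G_{\R^d}(F) \leq -c(C)\,|F\setminus\overline\Omega| + \Per(F\setminus\overline\Omega,\R^d)
\end{equation}
does not hold; the sign of $\int_{F\setminus\overline\Omega}\widehat p$ is uncontrolled on thin slivers hugging $\partial\Omega$ from outside. (As a side remark: you do not actually need the subadditivity step there since $\Omega$ convex already gives $\Per(F\cap\overline\Omega,\R^d)\leq\Per(F,\R^d)$; but this does not rescue the argument, because the integral term can still be positive.) What you correctly flag as ``the main obstacle,'' namely excluding small droplets of a minimizer outside $\overline\Omega$, is precisely what cannot be fixed by taking $C$ large; it requires, in one form or another, taking the collar width $\epsilon$ small and arguing by compactness that for $\epsilon$ small enough no minimizer sticks out, which is exactly the argument the paper runs.

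For \eqref{eq:farfrombdyk} and especially \eqref{eq:contdepext}, your treatment is also too coarse. The difficulty the paper has to deal with is that the admissible collar width $\delta$ a priori depends on the rate of convergence of the sequence of unconstrained maximal minimizers, which is $k$-dependent; they resolve this by introducing the curvature $p_{M,\delta}(x) := \max(\bar p_\delta(x), \sup_{k\geq k_0} p_{k,\delta}(x))$ and invoking a comparison principle for prescribed-curvature sets to get a $k$-uniform confining barrier. Asserting that ``the same $\chi$ and a single constant $C$ uniform in $k$'' suffices skips exactly this issue. Also, the paper is careful to use an extension operator with the quantitative continuity $\|p_\delta - q_\delta\|_{\cC^m(\R^d)} \leq \omega(\|p-q\|_{\cC^m(\overline\Omega)})$, which they attribute to Fefferman's extension theorem; invoking a generic bounded linear extension operator is fine for fixed $m$ but you should at least note that this continuity is what is actually needed for \eqref{eq:contdepext}.
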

\begin{proof}Let us start with the inclusion of the right-hand side into the left-hand side of \eqref{eq:farfrombdybar}. For any $\delta>0$, we can find a $\cC^m$ extension $\bar{p}_\delta \in \cC^m_c(\R^d)$ with $\bar{p}_\delta = \bar{p}$ on $\Omega$ and $\supp \bar{p}_{\delta} \subset \{x \in \R^d \,\vert\, \dist(x,\Omega) < \delta\}$, which can be constructed for example using partitions of unity as in \cite[Lem.~2.26]{Lee13}. With these, consider the family of problems
\begin{equation}\label{eq:prescurvdelta}\min_{E \subset \R^d} - \int_E \bar{p}_\delta \dd x + \Per(E,\R^d).\end{equation}
We claim that for some $\delta_0$ small enough, minimizers of these problems with $\delta \leq \delta_0$ are also minimizers for the interior problem 
\begin{equation}\label{eq:prescurvbar}\min_{E \subset \Omega} - \int_E \bar{p} \dd x + \Per(E,\Omega),\end{equation}
of which $\bar{E}$ is the maximal minimizer. Assume this was not the case, meaning that there exists a sequence $\delta_n \to 0$ and minimizers $E_{\delta_n}$ with 
\begin{equation}\label{eq:deltasubopt}- \int_{E_{\delta_n}} \bar{p} \dd x + \Per(E_{\delta_n},\Omega) > - \int_{\bar{E}} \bar{p} \dd x + \Per(\bar{E},\Omega) \quad \text{for all }n.\end{equation}
Since $\supp \bar{p}_{\delta_n} \subset \{x \in \R^d \,\vert\, \dist(x,\Omega) < \delta\}$ , the latter is convex \tcr{and intersection with convex sets cannot increase $\Per(\cdot,\R^d)$ \cite[Ex.~15.14]{Mag12}, we may assume that 
\begin{equation}\label{eq:distanceleqdelta}\left| \{x \in \R^d \,\vert\, \dist(x,\Omega) < \delta\} \setminus E_{\delta_n} \right|=0,\end{equation} which in particular} implies that the sequence $|E_{\delta_n}|$ is bounded. 

We also have that $\Per(E_{\delta_n},\R^d) \leq \int_{\R^d} \bar{p}_\delta \dd x$ is bounded, so \tcr{by the same compactness results used in the proof of Proposition \ref{prop:existenceMC},} for a not relabelled subsequence the $E_{\delta_n}$ converge in $L^1$ to some $E_0 \subset \R^d$ with $\1_{E_{\delta_n}} \wksto \1_{E_0}$ in $\BV(\R^d)$, and in fact
\begin{equation}\label{eq:E0min}E_0 \in \argmin_{E \subset \R^d} - \int_E \mathring{p} \dd x + \Per(E,\R^d),\end{equation}
where $\mathring{p}$ is the extension by zero of $\bar{p}$. Moreover, \tcr{by \eqref{eq:distanceleqdelta} also $\left|E_0 \setminus \tcr{\cl \Omega}\right| = 0$, and in fact $E_0 \subset \tcr{\cl \Omega}$. If this would be not the case there is $x_0 \in \operatorname{int}(E_0) \setminus \cl \Omega$ or $x_0 \in \partial E_0 \setminus \Omega$. In both cases, we obtain the existence of $r_0>0$ such that $B(x_0,r_0) \subset \R^d \setminus \cl \Omega$ and $|E \cap B(x_0,r_0)| \neq 0$ which follows by definition of the interior in the first case and by \eqref{eq:bdy} in the second, leading to a contradiction. This inclusion implies that $E_0$ is admissible for \eqref{eq:prescurvbar}, that is}
\begin{equation}\label{eq:Ebarmin}- \int_{E_0} \bar{p} \dd x + \Per(E_0, \Omega) \geq - \int_{\bar{E}} \bar{p} \dd x + \Per(\bar{E},\Omega).\end{equation}
\tcr{Moreover, $\bar{E}$ is also admissible in \eqref{eq:E0min} and $\Per(\bar{E},\Omega) = \Per(\bar{E},\R^d)$ by assumption, meaning that all of these energies must be equal. Now, we claim that there holds
\begin{equation}\label{eq:barEbarrier}
  \dist(E_0,\partial \Omega) \geq \dist(\bar{E}, \partial \Omega)
\end{equation}
where we again recall that this is understood in the sense of representatives satisfying \eqref{eq:bdy}. For this we assume the opposite, which since $\bar{E} \subset \Omega$ by definition, implies the existence of $x \in \partial E_0$ as well as $\varepsilon>0$ such that
\begin{equation}
    \dist(x, \partial \Omega) < \dist(y, \partial \Omega)-\varepsilon \quad \text{ for all }y \in \partial \overline{E},
\end{equation}
which in combination with \eqref{eq:bdy} would imply $|E_0\setminus \bar{E}| >0$, contradicting maximality of $\bar{E}$ among minimizers of the left hand side of \eqref{eq:farfrombdybar}.}

Next, we want to show that for all $n$ large enough, $E_{\delta_n}\subset \Omega$ and $\dist(E_{\delta_n},\partial \Omega) >0$. Take any $n$ for which this is not the case, which implies that there is some $x_{\delta_n} \in \partial E_{\delta_n}$ with either $x_{\delta_n} \notin \Omega$ or $x_{\delta_n} \in \partial \Omega$. But we notice that the $E_{\delta_n}$ possess uniform density estimates, meaning that there are $c \in (0,1)$ and $r_0>0$ such that for all $0<r\leq r_0$, all $n$ and all $x \in \partial E_{\delta_n}$, we have
\[1-c \leq \frac{|E_{\delta_n}\cap B(x,r)|}{|B(x,r)|} \leq c.\]
These directly imply that
\[\left|E_{\delta_n}\cap B\big(x_{\delta_n},r_1\big)\right| \geq (1-c)\left|B\big(x_{\delta_n},r_1\big)\right| \quad \text{ for }r_1 := \min(r_0, \dist(\bar{E}, \partial \Omega)).\]
But if $n$ is large enough so that 
\begin{equation}\label{eq:vanishingnuisance}\left|E_{\delta_n} \setminus \{x \in \Omega \,\vert\, \dist(x,\partial\Omega) \geq \dist(\bar{E}, \partial \Omega)\}\right| \leq |E_{\delta_n}\Delta E_0| < (1-c)|B(0,r_1)|,\end{equation}
we immediately get a contradiction (note that we have used \eqref{eq:barEbarrier} for the first inequality). We obtain that there is $n_0$ such that if $n \geq n_0$ then we have $E_{\delta_n}\subset \Omega$ and $\dist(E_{\delta_n},\partial \Omega) >0$. This then implies that
\[- \int_{E_{\delta_{n}}} \bar{p}_{\delta_n} \dd x + \Per(E_{\delta_n},\R^d) = - \int_{E_{\delta_n}} \bar{p} \dd x + \Per(E_{\delta_n},\Omega) \quad\text{for all }n\geq n_0,\]
but in combination with \eqref{eq:deltasubopt} this means that $E_{\delta_{n}}$ was not a minimizer of \eqref{eq:prescurvdelta}, since $\bar{E}$ is also admissible for it. This contradiction implies that we can use $\widehat{p}=\bar{p}_{\delta_0}$ as the desired extension of $\bar{p}$. Here, we notice that $\delta_0$ in principle depends not only on $r_1$ but also the rate of convergence of the left-hand side of \eqref{eq:vanishingnuisance} to zero. Finally, the opposite inclusion in \eqref{eq:farfrombdybar} follows immediately, since we have proved that for minimizers $E$ of the unconstrained problem, in fact $\Per(E,\Omega) = \Per(E,\R^d)$.

Next, we want to show that for $k$ large enough and some $\delta_{0,k}$, the $p_{k,\delta_{0,k}}$ constructed as above can be used as the desired extension $\widehat{p}_k$ of $p_k$. For this, we consider the maximal minimizer $\bar{E}_k$ of 
\begin{equation}\label{eq:prescurvbark}\min_{E \subset \Omega} - \int_E p_k \dd x + \Per(E,\Omega).\end{equation}
Moreover, arguing by compactness as above, $\bar{E}_k$ converge to some $\check{E}$ in $L^1$ and weak-* of indicator functions, and $\check{E}$ is a minimizer of \eqref{eq:prescurvbar}, so $\check{E} \subset \bar{E}$ by maximality of the latter. Since $p_k \to \bar{p}$ strongly in $L^d$, we can obtain density estimates for all minimizers of \eqref{eq:prescurvbark} in which the corresponding $c \in (0,1)$ and $r_0>0$ are independent of $k$. In combination with $|\bar{E}_k \Delta \check{E}| \to 0$ implies that $d_H(\bar{E}_k,\check{E}) \to 0$, where $d_H$ is the Hausdorff distance, and in particular 
\[\dist(\bar{E}_k, \partial \Omega)>\frac12 \dist(\bar{E}, \partial \Omega) \quad \text{for }k \geq k_0.\]
Thus, for such $k$ we can obtain a corresponding $\delta_{0,k}$ such that \eqref{eq:farfrombdyk} holds for $\widehat{p}_k=p_{k,\delta}$ for all $\delta \leq \delta_{0,k}$.

It remains to prove \eqref{eq:contdepext}. The main obstacle is that in the proof given the choice of $\delta$ for the extension depends on the rate of convergence of the maximal minimizers $\bar{E}_{k,\delta}$ of the unconstrained problem with $p_{k,\delta}$, which could prevent a choice of $\delta$ independent of $k$. For this, we can define for each $\delta$ a new function $p_{M,\delta}:\Omega \to \R$ by
\begin{equation}
p_{M,\delta}(x):=\max\Big(\bar{p}_\delta(x), \sup_{k \geq k_0} p_{k,\delta}(x)\Big)
\end{equation}
and the corresponding maximal minimizers $\bar{E}_{M,\delta}$, for which we can use a comparison principle for sets of prescribed mean curvature (see for example \cite[Lem.~3.4]{IglMer21}) to obtain that $\bar{E}_{k,\delta} \subset \bar{E}_{M,\delta}$, but also by the restriction $k \geq k_0$ that
\[\liminf_{\delta\to 0}\dist\Big( E_{M,\delta}, \partial \Omega\Big)>0.\]
Finally, we conclude by noticing that for fixed $\delta >0$ we have a continuous dependence 
\[\|p_\delta-q_\delta\|_{\cC^m(\R^d)} \leq \omega\left(\|p-q\|_{\cC^m(\tcr{\cl \Omega})}\right)\] 
for some modulus of continuity $\omega$. This continuity is not obvious from the standard construction of smooth extensions but can be obtained for example using the extensions to $\R^d$ provided by \cite[Thm.~1]{Fef07} and multiplying them by a bump function which is identical to $1$ on $\tcr{\cl \Omega}$ and supported on $\{x \in \R^d \,\vert\, \dist(x,\Omega) < \delta\}$.
\end{proof}
\section*{Acknowledgments}
The authors would like to thank two anonymous reviewers of the paper whose comments significantly improved its contents.
\small
\bibliographystyle{plain}
\bibliography{onecut}
\end{document}